\begin{document}

\def\framebox#1{\hbox{#1}} 

\allowdisplaybreaks


\title[Degree $2$ maps $f:\PP^2\to\PP^2$ with large $\Aut(f)$]
      {A classification of degree $2$ semi-stable rational maps $\PP^2\to\PP^2$
        with large finite dynamical automorphism group}
\date{\today}
\author{Michelle Manes}
\email{mmanes@math.hawaii.edu}
\address{Department of Mathematics, University of Hawaii, Honolulu, HI
  96822}
\author[Joseph H. Silverman]{Joseph H. Silverman}
\email{jhs@math.brown.edu}
\address{Mathematics Department, Box 1917
         Brown University, Providence, RI 02912 USA}
\subjclass[2010]{Primary: 37P45; Secondary: 37P05}
\keywords{dynamical moduli space}
\thanks{Silverman's research supported by Simons Collaboration Grant
  \#241309}



\hyphenation{ca-non-i-cal semi-abel-ian}


\newtheorem{theorem}{Theorem}
\newtheorem{lemma}[theorem]{Lemma}
\newtheorem{sublemma}[theorem]{Sublemma}
\newtheorem{conjecture}[theorem]{Conjecture}
\newtheorem{proposition}[theorem]{Proposition}
\newtheorem{corollary}[theorem]{Corollary}
\newtheorem*{claim}{Claim}

\theoremstyle{definition}
\newtheorem*{definition}{Definition}
\newtheorem{example}[theorem]{Example}
\newtheorem{remark}[theorem]{Remark}
\newtheorem{question}[theorem]{Question}

\theoremstyle{remark}
\newtheorem*{acknowledgement}{Acknowledgements}


\newenvironment{notation}[0]{%
  \begin{list}%
    {}%
    {\setlength{\itemindent}{0pt}
     \setlength{\labelwidth}{4\parindent}
     \setlength{\labelsep}{\parindent}
     \setlength{\leftmargin}{5\parindent}
     \setlength{\itemsep}{0pt}
     }%
   }%
  {\end{list}}

\newenvironment{parts}[0]{%
  \begin{list}{}%
    {\setlength{\itemindent}{0pt}
     \setlength{\labelwidth}{1.5\parindent}
     \setlength{\labelsep}{.5\parindent}
     \setlength{\leftmargin}{2\parindent}
     \setlength{\itemsep}{0pt}
     }%
   }%
  {\end{list}}
\newcommand{\Part}[1]{\item[\upshape#1]}

\def\Case#1#2{%
\paragraph{\textbf{\boldmath Case #1: #2.}}\hfil\break\ignorespaces}

\renewcommand{\a}{\alpha}
\renewcommand{\b}{\beta}
\newcommand{\g}{\gamma}
\renewcommand{\d}{\delta}
\newcommand{\e}{\epsilon}
\newcommand{\f}{\varphi}
\newcommand{\bfphi}{{\boldsymbol{\f}}}
\renewcommand{\l}{\lambda}
\renewcommand{\k}{\kappa}
\newcommand{\lhat}{\hat\lambda}
\newcommand{\m}{\mu}
\newcommand{\bfmu}{{\boldsymbol{\mu}}}
\renewcommand{\o}{\omega}
\renewcommand{\r}{\rho}
\newcommand{\rbar}{{\bar\rho}}
\newcommand{\s}{\sigma}
\newcommand{\sbar}{{\bar\sigma}}
\renewcommand{\t}{\tau}
\newcommand{\z}{\zeta}

\newcommand{\D}{\Delta}
\newcommand{\G}{\Gamma}
\newcommand{\F}{\Phi}
\renewcommand{\L}{\Lambda}

\newcommand{\ga}{{\mathfrak{a}}}
\newcommand{\gb}{{\mathfrak{b}}}
\newcommand{\gn}{{\mathfrak{n}}}
\newcommand{\gp}{{\mathfrak{p}}}
\newcommand{\gP}{{\mathfrak{P}}}
\newcommand{\gq}{{\mathfrak{q}}}

\newcommand{\Abar}{{\bar A}}
\newcommand{\Ebar}{{\bar E}}
\newcommand{\kbar}{{\bar k}}
\newcommand{\Kbar}{{\bar K}}
\newcommand{\Pbar}{{\bar P}}
\newcommand{\Sbar}{{\bar S}}
\newcommand{\Tbar}{{\bar T}}
\newcommand{\gbar}{{\bar\gamma}}
\newcommand{\lbar}{{\bar\lambda}}
\newcommand{\ybar}{{\bar y}}
\newcommand{\phibar}{{\bar\f}}

\newcommand{\Acal}{{\mathcal A}}
\newcommand{\Bcal}{{\mathcal B}}
\newcommand{\Ccal}{{\mathcal C}}
\newcommand{\Dcal}{{\mathcal D}}
\newcommand{\Ecal}{{\mathcal E}}
\newcommand{\Fcal}{{\mathcal F}}
\newcommand{\Gcal}{{\mathcal G}}
\newcommand{\Hcal}{{\mathcal H}}
\newcommand{\Ical}{{\mathcal I}}
\newcommand{\Jcal}{{\mathcal J}}
\newcommand{\Kcal}{{\mathcal K}}
\newcommand{\Lcal}{{\mathcal L}}
\newcommand{\Mcal}{{\mathcal M}}
\newcommand{\Ncal}{{\mathcal N}}
\newcommand{\Ocal}{{\mathcal O}}
\newcommand{\Pcal}{{\mathcal P}}
\newcommand{\Qcal}{{\mathcal Q}}
\newcommand{\Rcal}{{\mathcal R}}
\newcommand{\Scal}{{\mathcal S}}
\newcommand{\Tcal}{{\mathcal T}}
\newcommand{\Ucal}{{\mathcal U}}
\newcommand{\Vcal}{{\mathcal V}}
\newcommand{\Wcal}{{\mathcal W}}
\newcommand{\Xcal}{{\mathcal X}}
\newcommand{\Ycal}{{\mathcal Y}}
\newcommand{\Zcal}{{\mathcal Z}}

\renewcommand{\AA}{\mathbb{A}}
\newcommand{\BB}{\mathbb{B}}
\newcommand{\CC}{\mathbb{C}}
\newcommand{\FF}{\mathbb{F}}
\newcommand{\GG}{\mathbb{G}}
\newcommand{\NN}{\mathbb{N}}
\newcommand{\PP}{\mathbb{P}}
\newcommand{\QQ}{\mathbb{Q}}
\newcommand{\RR}{\mathbb{R}}
\newcommand{\ZZ}{\mathbb{Z}}

\newcommand{\bfa}{{\boldsymbol a}}
\newcommand{\bfb}{{\boldsymbol b}}
\newcommand{\bfc}{{\boldsymbol c}}
\newcommand{\bfd}{{\boldsymbol d}}
\newcommand{\bfe}{{\boldsymbol e}}
\newcommand{\bff}{{\boldsymbol f}}
\newcommand{\bfg}{{\boldsymbol g}}
\newcommand{\bfi}{{\boldsymbol i}}
\newcommand{\bfj}{{\boldsymbol j}}
\newcommand{\bfp}{{\boldsymbol p}}
\newcommand{\bfr}{{\boldsymbol r}}
\newcommand{\bfs}{{\boldsymbol s}}
\newcommand{\bft}{{\boldsymbol t}}
\newcommand{\bfu}{{\boldsymbol u}}
\newcommand{\bfv}{{\boldsymbol v}}
\newcommand{\bfw}{{\boldsymbol w}}
\newcommand{\bfx}{{\boldsymbol x}}
\newcommand{\bfy}{{\boldsymbol y}}
\newcommand{\bfz}{{\boldsymbol z}}
\newcommand{\bfA}{{\boldsymbol A}}
\newcommand{\bfF}{{\boldsymbol F}}
\newcommand{\bfB}{{\boldsymbol B}}
\newcommand{\bfD}{{\boldsymbol D}}
\newcommand{\bfG}{{\boldsymbol G}}
\newcommand{\bfI}{{\boldsymbol I}}
\newcommand{\bfM}{{\boldsymbol M}}
\newcommand{\bfP}{{\boldsymbol P}}
\newcommand{\bfzero}{{\boldsymbol{0}}}
\newcommand{\bfone}{{\boldsymbol{1}}}

\newcommand{\Aut}{\operatorname{Aut}}
\newcommand{\Birat}{\operatorname{Birat}}
\newcommand{\codim}{\operatorname{codim}}
\newcommand{\Crit}{\operatorname{Crit}}
\newcommand{\diag}{\operatorname{diag}}
\newcommand{\Disc}{\operatorname{Disc}}
\newcommand{\Div}{\operatorname{Div}}
\newcommand{\Dom}{\operatorname{Dom}}
\newcommand{\End}{\operatorname{End}}
\newcommand{\Fbar}{{\bar{F}}}
\newcommand{\Fix}{\operatorname{Fix}}
\newcommand{\Gal}{\operatorname{Gal}}
\newcommand{\GL}{\operatorname{GL}}
\newcommand{\Hom}{\operatorname{Hom}}
\newcommand{\Index}{\operatorname{Index}}
\newcommand{\Image}{\operatorname{Image}}
\newcommand{\Isom}{\operatorname{Isom}}
\newcommand{\hhat}{{\hat h}}
\newcommand{\Ker}{{\operatorname{ker}}}
\newcommand{\Lift}{\operatorname{Lift}}
\newcommand{\limstar}{\lim\nolimits^*}
\newcommand{\limstarn}{\lim_{\hidewidth n\to\infty\hidewidth}{\!}^*{\,}}
\newcommand{\Mat}{\operatorname{Mat}}
\newcommand{\maxplus}{\operatornamewithlimits{\textup{max}^{\scriptscriptstyle+}}}
\newcommand{\MOD}[1]{~(\textup{mod}~#1)}
\newcommand{\Mor}{\operatorname{Mor}}
\newcommand{\Moduli}{\mathcal{M}}
\newcommand{\Norm}{{\operatorname{\mathsf{N}}}}
\newcommand{\notdivide}{\nmid}
\newcommand{\normalsubgroup}{\triangleleft}
\newcommand{\NS}{\operatorname{NS}}
\newcommand{\onto}{\twoheadrightarrow}
\newcommand{\ord}{\operatorname{ord}}
\newcommand{\Orbit}{\mathcal{O}}
\newcommand{\Per}{\operatorname{Per}}
\newcommand{\Perp}{\operatorname{Perp}}
\newcommand{\PrePer}{\operatorname{PrePer}}
\newcommand{\PGL}{\operatorname{PGL}}
\newcommand{\Pic}{\operatorname{Pic}}
\newcommand{\Prob}{\operatorname{Prob}}
\newcommand{\Proj}{\operatorname{Proj}}
\newcommand{\Qbar}{{\bar{\QQ}}}
\newcommand{\rank}{\operatorname{rank}}
\newcommand{\Rat}{\operatorname{Rat}}
\newcommand{\Resultant}{\operatorname{Res}}
\renewcommand{\setminus}{\smallsetminus}
\newcommand{\sgn}{\operatorname{sgn}} 
\newcommand{\SL}{\operatorname{SL}}
\newcommand{\Span}{\operatorname{Span}}
\newcommand{\Spec}{\operatorname{Spec}}
\renewcommand{\ss}{\textup{ss}}
\newcommand{\stab}{\textup{stab}}
\newcommand{\Stab}{\operatorname{Stab}}
\newcommand{\Support}{\operatorname{Supp}}
\newcommand{\tors}{{\textup{tors}}}
\newcommand{\Trace}{\operatorname{Trace}}
\newcommand{\trianglebin}{\mathbin{\triangle}} 
\newcommand{\tr}{{\textup{tr}}} 
\newcommand{\UHP}{{\mathfrak{h}}}    
\newcommand{\<}{\langle}
\renewcommand{\>}{\rangle}

\newcommand{\pmodintext}[1]{~\textup{(mod}~#1\textup{)}}
\newcommand{\ds}{\displaystyle}
\newcommand{\longhookrightarrow}{\lhook\joinrel\longrightarrow}
\newcommand{\longonto}{\relbar\joinrel\twoheadrightarrow}
\newcommand{\SmallMatrix}[1]{%
  \left(\begin{smallmatrix} #1 \end{smallmatrix}\right)}


\begin{abstract}
Let $K$ be an algebraically closed field of characteristic $0$.  In
this paper we classify the $\text{PGL}_3(K)$-conjugacy classes of
semi-stable dominant degree $2$ rational maps
$f:{\mathbb  P}^2_K\dashrightarrow{\mathbb P}^2_K$ whose
automorphism group
$$\text{Aut}(f):=\bigl\{\phi\in\text{PGL}_3(K): \phi^{-1}\circ f\circ\phi=f\bigr\}$$
is finite and of order at least $3$.  In particular, we prove that
$\#\text{Aut}(f)\le24$ in general, that $\#\text{Aut}(f)\le21$ for
morphisms, and that $\#\text{Aut}(f)\le6$ for all but finitely many
conjugacy classes of $f$.
\par\vspace{10pt}\noindent\textsc{Abstract}.\enspace
Soit $K$ un corps alg\'ebriquement clos de charac\-t\'er\-is\-ktique
$0$. Dans cet article nous classifions les $\text{PGL}_3(K)$-classes
de conjugaison de fonctions rationelles
$f:{\mathbb P}^2_K\dashrightarrow{\mathbb P}^2_K$ de degr\'e $2$
dominantes et semi-stables dont le groupe d'automorphismes
$$\text{Aut}(f):=\bigl\{\phi\in\text{PGL}_3(K): \phi^{-1}\circ f\circ\phi=f\bigr\}$$
est fini et d'ordre au moins $3$. En particulier, nous d\'emontrons que
$\#\text{Aut}(f)\le24$ en g\'en\'eral, que $\#\text{Aut}(f)\le21$ pour
les morphismes et que $\#\text{Aut}(f)\le6$ pour toutes except\'e un
nombre fini de classes de conjugaisons de $f$.
\end{abstract}

\maketitle

\tableofcontents

\section{Introduction}

Let $d\ge1$ and $N\ge1$ be integers and let
\[
  L = L(N,d) = \binom{N+d}{d}(N+1)-1.
\]
We identify~$\PP^L$ with the space of $(N+1)$-tuples of
homogeneous polynomials of degree~$d$ in~$N+1$ variables such that at
least one polynomial is non-zero. Thus each~$f=[f_0,\ldots,f_N]\in\PP^L$ 
defines a rational map
\[
  f : \PP^N\dashrightarrow\PP^N.
\]
Although map~$f$ need not be dominant, nor, if it is dominant, need it
have degree~$d$, we adopt the notation
\[
  \Rat_d^N:=\PP^L
\]
and call~$\Rat_d^N$ the parameter space of rational self-maps
of~$\PP^N$ of \emph{formal degree~$d$}.

The group~$\PGL_{N+1}$ acts on~$\Rat_d^N$ via conjugation, i.e., the
action of~$\f\in\PGL_{N+1}$ on $f\in\Rat_d^N$ is
\[
  f^\f := \f^{-1}\circ f\circ\f.
\]
This gives a homomorphism
\[
  \PGL_{N+1}\longrightarrow\Aut(\Rat_d^N)=\Aut(\PP^L)\cong\PGL_{L+1}.
\]
Geometric invariant theory~\cite{mumford:geometricinvarianttheory}
tells us that there are subsets~$(\Rat_d^N)^\stab$ and $(\Rat_d^N)^\ss$
of stable and semi-stable points in~$\Rat_d^N$ which admit good
quotients for the action of~$\PGL_{N+1}$.\footnote{More precisely,
  they admit good $\SL_{N+1}$-quotients;
  cf.\ \cite[\S~2.1]{MR2884382}.}  We denote these quotients by
$(\Moduli_d^N)^\stab$ and $(\Moduli_d^N)^\ss$.

In this note we are interested in the locus in~$\Rat_d^N$ of maps that
admit a non-trivial automorphism.

\begin{definition}
The \emph{automorphism group} of a map~$f\in\Rat_d^N$ is
\[
  \Aut(f) = \{ \f\in\PGL_{N+1} : f^\f=f\}.
\]
We note that $\Aut(f^\f)=\Aut(f)^\f$. In particular, the isomorphism
type of~$\Aut(f)$ is a $\PGL_{N+1}$-conjugation invariant.
\end{definition}

\begin{remark}
It is known that $(\Moduli_2^1)^\stab = (\Moduli_2^1)^\ss \cong \PP^2$
and that $\bigl\{f\in (\Moduli_2^1)^\stab : \#\Aut(f)\ge2\bigr\}$ is a
cuspidal cubic curve in~$\PP^2$. More precisely, for~$f$ on this
curve, $\Aut(f)\cong C_2$ for the non-cuspidal points and
$\Aut(f)\cong S_3$ at the cuspidal point;
see~\cite[Proposition~4.15]{MR2884382}.  We postpone to
Section~\ref{section:background} an overview of our
current knowledge of maps having non-trivial automorphism group.
\end{remark}

Our primary goal in this paper is to describe the degree~$2$ maps
in~$(\Moduli_2^2)^\ss$ having large finite automorphism group, i.e.,
we want to extend the above-mentioned classification of quadratic maps
on~$\PP^1$ to quadratic maps on~$\PP^2$. We mention that a number of
new phenomena appear, including semi-stable dominant rational
quadratic maps~$f:\PP^2\dashrightarrow\PP^2$ for which~$\Aut(f)$
contains a copy of~$\GG_m$. For reasons that we explain later, we
mostly exclude these maps from our analysis; see
Section~\ref{section:quotientmap}. We also do not study maps with
$\Aut(f)\cong C_2$, since they are too plentiful.

Before stating our main results, we need some additional notation.  In
general, for any finite subgroup $\Gcal\subseteq\PGL_{N+1}$, we
consider
\[
  \Rat_d^N(\Gcal) := \bigl\{ f\in\Rat_d^N : \Aut(f)\supseteq\Gcal \bigr\}.
\]
If~$\Gcal^\f$ is a conjugate subgroup, then
\[
  \Rat_d^N(\Gcal) \xrightarrow{\;\sim\;}\Rat_d^N(\Gcal^\f),\quad
  f\xrightarrow{\;\sim\;}f^\f,
\]
so it suffices to study~$\Rat_d^N(\Gcal)$ for each conjugacy class of finite
subgroup in~$\PGL_{N+1}$.

It is important to note that~$\PGL_{N+1}$  generally does not act on
$\Rat_d^N(\Gcal)$, since if $f\in\Rat_d^N(\Gcal)$ and
$\f\in\PGL_{N+1}$, then $\Aut(f^\f)=\Aut(f)^\f\supseteq\Gcal^\f$. Thus
in order to ensure that~$f^\f$ is in $\Rat_d^N(\Gcal)$, we need
$\Gcal^\f=\Gcal$, i.e., the map~$\f$ must be in the
normalizer~$N(\Gcal)$ of~$\Gcal$. We thus define\footnote{Again, we
  are being somewhat informal in this introduction. To be rigorous, we
  lift~$\Gcal$ to an isomorphic subgroup~$\tilde{\Gcal}$
  in~$\SL_{N+1}$ and look at maps~$f$ that
  are~$N(\tilde{\Gcal})$-semistable.}
\[
  \Rat_d^N(\Gcal)^\ss := \bigl\{f\in\Rat_d^N(\Gcal) :
     \text{$f$ is $N(\Gcal)$-semistable} \bigr\}, \\
\]
and similarly~$\Rat_d^N(\Gcal)^\stab$ denotes the set of~$N(\Gcal)$-stable maps.
It turns out that if $f\in\Rat_d^N(\Gcal)$ is $N(\Gcal)$-semistable,
then~$f$ is also $\PGL_{N+1}$-semistable when viewed as a point in~$\Rat_d^N$,
and further the natural map
\begin{equation}
  \label{eqn:RmodNRmodPGL}
  \Rat_d^N(\Gcal)^\ss/N(\Gcal) \longrightarrow  (\Rat_d^N)^\ss/\PGL_{N+1}
\end{equation}
is finite; see Proposition~\ref{proposition:XGXHNH} for a general
result. However, the map~\eqref{eqn:RmodNRmodPGL} may fail to be
injective due to the existence of~$f$'s that are~$\PGL_{N+1}$-conjugate,
but are not~$N(\Gcal)$-conjugate; see Example~\ref{example:notinj}.

Our main results give a complete descriptoin of semistable dominant
rational quadratic maps $f:\PP^2\dashrightarrow\PP^2$ satisfying
$3\le\#\Aut(f)<\infty$.

\begin{theorem}
\label{theorem:main1}
Let~$K$ be an algebraically closed field of characteristic~$0$, and
let~$\Gcal\subset\PGL_3(K)$ be a finite subgroup with $\#\Gcal\ge3$.
Suppose that there exists a map $f\in\Rat_2^2(\Gcal)^\ss(K)$ satisfying
\begin{parts}
\Part{\textbullet}
$f:\PP^2\dashrightarrow\PP^2$ is dominant.
\Part{\textbullet}
$\deg(f)=2$.
\Part{\textbullet}
$\Aut(f)$ is finite
\end{parts}
Then there is a $\PGL_3(K)$-conjugate of~$\Gcal$ that contains one of
the following groups, where~$\z_n$ is a primitive $n$'th root of
unity\textup:
\begin{equation}
\label{eqn:subgpspgl3j}
  \begin{aligned}
    \Gcal_3 &= \left\< \SmallMatrix{1&0&0\\0&\smash[t]{\z_3}&0\\0&0&\smash[t]{\z_3^2}\\} \right\>,
    & \Gcal_4 &= \left\< \SmallMatrix{1&0&0\\0&i&0\\0&0&-1\\} \right\>, 
    & \Gcal_5 &= \left\< \SmallMatrix{1&0&0\\0&\smash[t]{\z_5}&0\\0&0&\smash[t]{\z_5^3}\\} \right\>, \\
    \Gcal_7 &= \left\< \SmallMatrix{1&0&0\\0&\smash[t]{\z_7}&0\\0&0&\smash[t]{\z_7^3}\\} \right\>, 
    & \Gcal_{2,2} &= \left\< \SmallMatrix{1&0&0\\0&-1&0\\0&0&1\\},\;
                    \SmallMatrix{1&0&0\\0&1&0\\0&0&-1\\} \right\>. \hidewidth
  \end{aligned}
\end{equation}
\end{theorem}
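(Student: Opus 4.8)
The plan is to recast the hypothesis representation-theoretically, use the classification of finite subgroups of $\PGL_3(K)$ to reduce to finitely many cases, and then eliminate everything except the five listed groups by a dominance/degree analysis combined with a Hilbert--Mumford semistability computation. Write $V=K^3$, so that $\Rat_2^2=\PP(U)$ with $U=\operatorname{Sym}^2(V^\vee)\otimes V$ the $18$-dimensional space on which $\PGL_3$ acts by conjugation, and so that $\Aut(f)\supseteq\Gcal$ exactly when $f$ is a $\Gcal$-fixed point of $\PP(U)$. Lifting $\Gcal$ to a finite subgroup $\tilde\Gcal\subset\GL_3(K)$ and decomposing $U=\bigoplus_\chi U_\chi$ into $\tilde\Gcal$-eigenspaces, this says that $f\in\PP(U_\chi)$ for some one-dimensional character $\chi$ of $\tilde\Gcal$; in particular such an $f$ can exist at all only if $U$ contains a $\Gcal$-stable line, i.e.\ only if $V$ embeds $\Gcal$-equivariantly --- up to a twist by a character --- into $\operatorname{Sym}^2 V$.

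By the Miller--Blichfeldt--Dickson classification, $\Gcal$ is (i) conjugate into the diagonal maximal torus $T$, hence abelian; (ii) imprimitive, i.e.\ it stabilizes a triangle and lies in an extension of a subgroup of $S_3$ by a diagonal group; or (iii) primitive, in which case a Schur cover of $\Gcal$ is one of $A_5$, $A_6$, $\operatorname{PSL}_2(\FF_7)$, or a Hessian group of order $36$, $72$, or $216$. For every primitive $\Gcal$ the embedding criterion fails, so $\PP^2$ carries no $\Gcal$-equivariant quadratic self-map whatsoever: the Hessian groups contain the order-$9$ subgroup $E$ generated in $\PGL_3$ by $\diag(1,\z_3,\z_3^2)$ and a cyclic coordinate permutation, whose preimage $\tilde E\subset\SL_3$ is the Heisenberg group of order $27$, and $U$ is a direct sum of copies of the unique faithful ($3$-dimensional) irreducible of $\tilde E$, which has no $\tilde E$-eigenvector, so $\PP(U)$ has no $E$-fixed point; while for $A_5$, $A_6$, $\operatorname{PSL}_2(\FF_7)$ a short character computation (for $A_6$, a central-character computation on the $3$-fold cover) shows that $\operatorname{Sym}^2 V$ contains no copy of $V$. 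This disposes of case (iii).

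For the diagonal and imprimitive groups one proceeds explicitly. After diagonalizing $\tilde\Gcal$, a character $\chi$ singles out a monomial support $S_\chi$ among the $18$ pairs (degree-$2$ monomial, output coordinate), and $\PP(U_\chi)$ is the linear system of maps $f=[f_0,f_1,f_2]$ supported on $S_\chi$. The three hypotheses become conditions on $S_\chi$: dominance is the non-vanishing on $U_\chi$ of the Jacobian determinant; $\deg f=2$ is $\gcd(f_0,f_1,f_2)=1$ for generic coefficients; and $\#\Aut(f)<\infty$ forces the weights that $T\cong\GG_m^2$ attaches to $S_\chi$ to span affinely the full rank-$2$ weight lattice, since otherwise a positive-dimensional subtorus of $T$ acts on $U_\chi$ by a single scalar and therefore lies inside $\Aut(f)$. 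Imposing these together with $N(\Gcal)$-semistability, a finite inspection of the diagonal abelian groups retains exactly the five groups of \eqref{eqn:subgpspgl3j} --- the cyclic groups of orders $3,4,5,7$ in the indicated normal forms, and $\Gcal_{2,2}$ --- and for an imprimitive non-abelian $\Gcal$ one incorporates the $S_3$-permutation action on the eigenspaces (the diagonal part of $\Gcal$, when nontrivial, is already forced into this list) and checks case by case that no admissible $f$ survives.

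The step I expect to be the main obstacle is the verification of $N(\Gcal)$-semistability through the numerical criterion. When $\Gcal$ is diagonal, $N(\Gcal)$ contains the monomial group $T\rtimes S_3$, and semistability reduces to the clean statement that the origin lies in the convex hull of the $T$-weight polytope of $f$ and of its $S_3$-translates; but for the imprimitive non-abelian groups --- and, equivalently, in order to see exactly why apparently large candidates such as $S_3$ or the Frobenius group of order $21$ fail to occur --- one must pin down the relevant normalizers, reduce to their one-parameter subgroups, and carry out the weight computation in coordinates adapted to the group action. This is where the bulk of the casework resides, and where the near-misses --- dominant degree-$2$ maps with finite automorphism group that nonetheless fail $N(\Gcal)$-semistability --- get discarded.
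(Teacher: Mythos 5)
Your overall architecture is genuinely different from the paper's, and the difference is worth spelling out. The paper never invokes the Blichfeldt classification and never has to confront the primitive subgroups of $\PGL_3$: it observes that any finite group of order at least $3$ contains an element of odd prime order or else a subgroup isomorphic to $C_4$ or $C_2^2$ (Cauchy plus the first Sylow theorem), classifies the $\PGL_3(K)$-conjugacy classes of just these small groups (in characteristic $0$ they are diagonalizable, so this is a short root-of-unity computation, Lemma~\ref{lemma:GinPGL3}), and then runs the monomial-support and Hilbert--Mumford analysis only for the resulting diagonal normal forms. Your reduction to the condition that $V$ embed, up to a character twist, into $\operatorname{Sym}^2 V$ is correct, and your elimination of the primitive groups is sound; indeed the Heisenberg computation is a genuinely useful supplement, since the image of the Heisenberg group is a copy of $C_3\times C_3$ in $\PGL_3$ that is \emph{not} conjugate to the diagonal one (the relation $AB=\z BA$ with $A=\diag(1,\z_3,\z_3^2)$ and $B$ a cyclic permutation matrix admits an invertible solution), a conjugacy class that the diagonalization argument in Lemma~\ref{lemma:GinPGL3}(c) does not reach. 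The price of your route is that the diagonal/imprimitive casework must be carried out for arbitrary finite abelian and monomial groups rather than only for $C_p$, $C_4$, $C_2^2$; this is still finite, but it is the least developed part of your plan and is precisely what the Cauchy--Sylow reduction is designed to avoid. On the portion where the two proofs overlap --- the eigen-support tables and the $L_{k,\ell}$ weight computations --- your plan coincides with Sections~\ref{section:diagstability}--\ref{section:autC3} of the paper.

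There is, however, one concrete error in the endgame. You assert that for imprimitive non-abelian $\Gcal$ one "checks case by case that no admissible $f$ survives." That is false: $f=[YZ,XZ,XY]$ is dominant of degree $2$, stable, and has $\Aut(f)=\Scal_3\Gcal_{2,2}\cong S_4$, an imprimitive non-abelian subgroup; by Proposition~\ref{proposition:XGXHNH}(a) this $f$ lies in $\Rat_2^2(\Gcal)^\ss$ for $\Gcal=\Scal_3\Gcal_{2,2}$, and the same phenomenon occurs for $\Gcal\cong S_3$ and $\Gcal\cong C_7\rtimes C_3$ (Types 1.3 and 8.1 of Table~\ref{table:maintable}). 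The conclusion of Theorem~\ref{theorem:main1} therefore has to be read the way Section~\ref{section:endproof} actually proves it: $\Aut(f)$, and hence any $\Gcal$ of order at least $3$ that one starts from, \emph{contains a subgroup} $\PGL_3(K)$-conjugate to one of the five listed groups. A head-on classification of all admissible $\Gcal$, carried out correctly, produces a strictly larger list than the five groups, at which point your final step as written collapses into a contradiction with your own claim. The fix is either to reformulate the target as a statement about minimal witnesses, or to adopt the paper's reduction and only ever analyze $C_p$ ($p$ an odd prime), $C_4$, and $C_2^2$.
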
  

\begin{theorem}
\label{theorem:main2}
Let~$K$ be an algebraically closed field of characteristic~$0$, and
let~$\Gcal\subset\PGL_3(K)$ be one of the groups~\eqref{eqn:subgpspgl3j}
listed in Theorem~$\ref{theorem:main1}$. Suppose that~$f:\PP^2\dashrightarrow\PP^2$
satisfies the following\textup:
\begin{parts}
\Part{\textbullet}
$f\in\Rat_2^2(\Gcal)^\ss(K)$.  
\Part{\textbullet}
$f$ is dominant with $\deg(f)=2$.
\Part{\textbullet}
$\Aut(f)$ is finite.
\end{parts}
Then~$f$ is $N(\Gcal)$-conjugate to one of the maps
listed in Table~$\ref{table:maintable}$.  \textup(See
Table~$\ref{table:expl}$ for an explanation of the entries in
Table~$\ref{table:maintable}$.\textup)
\end{theorem}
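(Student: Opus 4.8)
The plan is to work through the five groups $\Gcal\in\{\Gcal_3,\Gcal_4,\Gcal_5,\Gcal_7,\Gcal_{2,2}\}$ one at a time; since every group on the list is diagonal, all of the computations are explicit and follow a single pattern. \emph{Step 1: the invariant family.} For a diagonal $\f=\diag(\a_0,\a_1,\a_2)$ the $i$th coordinate of $f^\f$ is $\a_i^{-1}f_i(\a_0x_0,\a_1x_1,\a_2x_2)$, so if $f_i=\sum_{j\le k}c_{i,jk}x_jx_k$ then $f^\f=f$ forces $\a_j\a_k=\a_i$ (up to the scalar ambiguity of $\PGL_3$, which I keep track of by fixing a lift to $\GL_3$) for every monomial $x_jx_k$ with $c_{i,jk}\ne0$. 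Running over the generators of $\Gcal$ singles out, for each $i$, the set $M_i$ of monomials allowed in $f_i$, and $\Rat_2^2(\Gcal)$ is the linear subspace of $\Rat_2^2$ cut out by the equations $c_{i,jk}=0$ for $x_jx_k\notin M_i$; tabulating $(M_0,M_1,M_2)$ for each group is finite bookkeeping. \emph{Step 2: the normalizer.} Each group on the list contains an element with three distinct eigenvalues (obvious for $\Gcal_3,\Gcal_4,\Gcal_5,\Gcal_7$; for $\Gcal_{2,2}$ one checks directly that the common centralizer of its three involutions is the diagonal torus $T\subset\PGL_3$), so the centralizer of $\Gcal$ equals $T$, and hence any $g$ normalizing $\Gcal$ also normalizes this centralizer; thus $N(\Gcal)\subseteq N(T)=T\rtimes S_3$ and in fact $N(\Gcal)=T\rtimes F_\Gcal$, where $F_\Gcal\subseteq S_3$ is the group of coordinate permutations carrying $\Gcal$ to itself, read off from the generators (for instance $F_{\Gcal_{2,2}}=S_3$, since the three involutions of $\Gcal_{2,2}$ are the coordinate sign flips, which $S_3$ permutes).

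\emph{Step 3: reduction to normal form.} The reductive group $N(\Gcal)=T\rtimes F_\Gcal$ acts on the linear space $\Rat_2^2(\Gcal)$, with $T$ acting monomial-by-monomial on the coefficients $c_{i,jk}$. Within $\Rat_2^2(\Gcal)$ I first isolate and discard: (i) the locus where $f$ fails to be dominant or has degree $<2$, a proper Zariski-closed condition whose components I record so that the exclusion is explicit; and (ii) the locus where $\Aut(f)$ is infinite, which is again closed (the condition $\dim\Aut(f)\ge1$ is closed) and, in practice, consists of the maps carrying an extra continuous group of symmetries, such as the copies of $\GG_m$ mentioned in the introduction. On what remains I impose $N(\Gcal)$-semistability through the Hilbert--Mumford numerical criterion for the torus $T$, namely that $0$ lie in the convex hull of the weights of the monomials occurring in $f$, which removes the remaining unstable points. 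Finally I use $T$ to normalize the nonzero coefficients (noting that certain ratios of coefficients are $T$-invariant, and these invariants are exactly the residual parameters appearing in the table entries), and then quotient by the residual action of the finite group $F_\Gcal$ and of the finite stabilizer in $T$ of the chosen shape; each surviving representative, parameters included, is one line of Table~\ref{table:maintable}.

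\emph{Step 4: the table data.} For each representative $f$ I solve $\f^{-1}f\f=f$ for a general $\f\in\PGL_3$; because $f$ has already been pinned into a rigid shape this is a small explicit polynomial system in the entries of $\f$, whose solution set is $\Aut(f)$ exactly. This may be strictly larger than $\Gcal$ — for instance a suitable coordinate permutation, or a residual element of $T$, may belong to it — which accounts for the larger symmetry groups recorded in the table. Computing along the way the indeterminacy locus, the critical locus, and the other quantities referenced in Table~\ref{table:expl} fills in the remaining columns.

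The main obstacle is not any single computation but the completeness and organization of the case analysis. In Step 3 one must carefully distinguish coefficient patterns whose $T$-stabilizer is positive-dimensional (these give infinite $\Aut(f)$ and are excluded) from those whose stabilizer is merely finite, and one must correctly decide which superficially different normal forms are actually $N(\Gcal)$-conjugate by an element of $F_\Gcal$ or by a residual torus element, so as to neither duplicate nor omit rows. A secondary, pervasive subtlety is keeping the $\PGL_3$-versus-$\SL_3$ scalar straight throughout, since the semistability being used is the $\SL$-semistability of a lift of $N(\Gcal)$, as in the footnote to the definition of $\Rat_d^N(\Gcal)^\ss$; Proposition~\ref{proposition:XGXHNH} is what guarantees the passage back to $\PGL_3$-semistability is harmless.
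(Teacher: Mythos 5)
Your Steps 1--3 follow the same architecture as the paper: the weight computation for diagonal generators producing the allowed monomial sets (the paper's Tables~\ref{table:effectoftm} and~\ref{table:effectofL}), the computation of $N(\Gcal)$ as $\Dcal$ extended by coordinate permutations (Lemma~\ref{lemma:subgpnlzr}), the Hilbert--Mumford criterion applied to the one-parameter subgroups $L_{k,\ell}$ of the torus, and the use of the residual torus action to normalize coefficients. One small caution there: for each $\Gcal$ the lift of the generator acts on $\hat f$ by a character $\z^\e$, and each value of $\e$ gives a \emph{different} linear family of invariant maps (for $\Gcal_3$ there are three, for $\Gcal_4$ four, etc.); your parenthetical about ``the scalar ambiguity'' needs to be expanded into an enumeration over all characters, or you will miss families.

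The genuine gap is in Step~4. You assert that because $f$ has been pinned into a rigid shape, solving $f^\f=f$ for general $\f\in\PGL_3$ is ``a small explicit polynomial system\dots whose solution set is $\Aut(f)$ exactly.'' That system has $8$ unknowns (the entries of $\f$ modulo scalars) and, written out, roughly $18$ polynomial conditions of degree $3$ in those entries; proving that you have found \emph{all} of its solutions --- i.e., establishing the upper bound $\Aut(f)\subseteq\Gcal$ or $\Aut(f)\subseteq\Scal_3\Dcal$-related groups --- is the hard content of the theorem, and it does not follow from the normal form alone. The paper's mechanism is to first constrain $\f$ geometrically: every $\f\in\Aut(f)$ must permute $I(f)$ and preserve $\Crit(f)$ with multiplicities, which forces $\f$ into a low-dimensional family (triangular, diagonal-times-permutation, etc.) before any coefficient matching is done. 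Even with that reduction, two classes of cases resist direct computation and require additional ideas your proposal does not supply: (i) when $I(f)=\emptyset$ and $\Crit(f)$ is a smooth cubic (Types~1.5, 1.8), the paper uses the action of $\f$ on the nine flex points, producing $432$ candidate triples and a computer-assisted rank/resultant analysis of $12\times6$ matrices $M_{i,j,k}(\b,\g)$; (ii) for Type~3.4 the paper identifies $\Crit(f)$ as an elliptic curve with CM by $\ZZ[i]$, shows $\f$ acts as an isomorphism composed with translation by a $3$-torsion point, and rules out nontrivial translations by invoking Proposition~\ref{proposition:fCpCp} (no dominant quadratic map has $C_3\times C_3$ in its automorphism group). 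Without some substitute for these arguments, your Step~4 establishes only the lower bound $\Gcal\subseteq\Aut(f)$ and the correctness of the normal forms, not the completeness of the $\Aut$ column or the nonexistence of extra conjugacies between rows; relatedly, identifying and discarding the infinite-automorphism locus in your Step~3 presupposes exactly the computation of $\Aut(f)$ that Step~4 is supposed to carry out.
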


\begin{table}[p]
{\small  
\newcommand{\BS}[1]{{\boldsymbol{#1}}}  
\[
\begin{array}{|c|c|c|c|c|c|c|c|c|} \hline
  \hspace{.5em}f\hspace{.5em}
  & \text{Coeffs} & \Aut & \dim  & \text{St?} & \#I & \Crit & \l_1 & \l_2  \\ \hline\hline
  \multicolumn{9}{|l|}{\boldsymbol{\Gcal_3}:\;f_{b,d,g} = [X^2+bYZ,Z^2+dXY,Y^2+gXZ],
       \quad f_{b,d,g}\sim f_{b,g,d} } \\ \cline{2-9}
\BS{1.1}  & (2,2\z_3,2\z_3^2) & C_7\rtimes C_3 & 0  & S & 0 & L_1{\cdot} L_2{\cdot} L_3 & 2 & 4\\
\BS{1.2}  & (-1,-1,-1) & S_4      & 0  & S & 3 & L_1{\cdot} L_2{\cdot} L_3 & 1 & 1\\
\BS{1.3}  & (0,0,0) & S_3      & 0  & S & 0 & L_1{\cdot} L_2{\cdot} L_3 & 2 & 4\\  
\BS{1.4}  & (b,b^{-1},-1) & S_3      & 1  & S & 3 & L_1{\cdot} L_2{\cdot} L_3 & *^1 & *^1\\
\BS{1.5}  & (b,d,d)      &  S_3     & 2  & S & 0 & \G & 2 & 4\\
\BS{1.6}  & bdg = -1 & C_3 & 2  & S & 3 & L_1{\cdot} L_2{\cdot} L_3 & *^2 & *^2\\
\BS{1.7}  & bdg =  8 & C_3 & 2  & S & 0 & L_1{\cdot} L_2{\cdot} L_3 & 2 & 4\\
\BS{1.8}  & \text{other} & C_3 & 3  & S & 0 & \G & 2 & 4\\  \hline
  \multicolumn{9}{|l|}{\boldsymbol{\Gcal_3}:\;f_{a,c,g} = [aX^2+YZ,cZ^2+XY,Y^2+gXZ],
    \quad f_{0,c,g}\sim f_{0,c/g,1/g} } \\
  \multicolumn{9}{|l|} {\hspace{4em} (a,c,g)\ne(0,0,0) }\\ \cline{2-9}
\BS{2.1}  & (0,0,g) & C_3 & 1  & SS & 2 & C{\cdot} L & *^3 & 2\\
\BS{2.2}  & (0,c,1) & S_3 & 1  & S & 1 & \G' & 2 & 3\\
\BS{2.3}  & (0,c,0) & C_3 & 1  & S  & 1 & 3L & 2 & 1 \\
\BS{2.4}  & (a,0,0) & C_3 & 1  & SS & 1 & 3L & 2 & 1 \\
\BS{2.5}  & (a,0,g) & C_3 & 2  & SS & 1 & \G' & 2 & 3 \\
\BS{2.6}  & (0,c,g) & C_3 & 2  & S  & 1 & \G' & 2 & 3 \\ \hline
  \multicolumn{9}{|l|}{\boldsymbol{\Gcal_4}:\;f_{a,e} = [aX^2+Z^2,XY,Y^2+eXZ] } \\ \cline{2-9} 
\BS{3.1}  & (0,0) & C_4 & 0  & S & 1 & 2L_1{\cdot} L_2 & \sqrt2 & 2 \\
\BS{3.2}  & (0,e) & C_4 & 1  & S & 1 & C{\cdot} L & 2 & 3 \\
\BS{3.3}  & (a,0) & C_4 & 1  & S & 2 & 2L_1{\cdot} L_2 & 2 & 2\\
\BS{3.4}  & (a,e) & C_4 & 2  & S & 0 & \G & 2 & 4 \\ \hline
  \multicolumn{9}{|l|}{\boldsymbol{\Gcal_4}:\;f_{c} = [YZ,X^2+cZ^2,XY],
    \quad f_c\sim f_{1/c} } \\ \cline{2-9}
\BS{4.1}  & (-1) & \GG_m\rtimes C_2 & 0  & S & 3 & L_1{\cdot} L_2{\cdot} L_3 & 1 & 1\\
\BS{4.2}  & (1) & S_4 & 0  & S & 3 & L_1{\cdot} L_2{\cdot} L_3 & 1 & 1\\
\BS{4.3}  & (0) & C_4 & 0  & S & 2 &  2L_1{\cdot} L_2 & 1 & 1\\
\BS{4.4}  & (c) & C_4 & 1  & S & 3 & L_1{\cdot} L_2{\cdot} L_3 & 1 & 1\\   \hline
  \multicolumn{9}{|l|}{\boldsymbol{\Gcal_{2,2}}:\;f_{a,e} = [aX^2+Y^2-Z^2,XY,eXZ],
    \quad f_{0,e} \sim f_{0,1/e} } \\ \cline{2-9}
\BS{5.1}  & (0,1)  & \GG_m\rtimes C_2 & 0  & S & 3 & L_1{\cdot} L_2{\cdot} L_3 & 1 & 1\\
\BS{5.2}  & (0,-1) & S_4 & 0  & S & 3 & L_1{\cdot} L_2{\cdot} L_3 & 1 & 1\\
\BS{5.3}  & (0,e)  & C_2^2 & 1  & S & 3 & L_1{\cdot} L_2{\cdot} L_3 & 1 & 1\\
\BS{5.4}  & (a,1)  & \GG_m\rtimes C_2 & 1  & S & 2 & C{\cdot} L & 2 & 2\\
\BS{5.5}  & (a,e) & C_2^2 & 2  & S & 2 & C{\cdot} L & 2 & 2\\ \hline
  \multicolumn{9}{|l|}{\boldsymbol{\Gcal_{2,2}}:\;f = [YZ,XZ,XY]} \\ \cline{2-9}
\BS{6.1}  &  &  S_4 & 0  & S & 3 & L_1{\cdot} L_2{\cdot} L_3 & 1 & 1 \\
\hline  
  \multicolumn{9}{|l|}{\boldsymbol{\Gcal_5}:\;f = [YZ,X^2,Y^2]} \\ \cline{2-9}
\BS{7.1}  &  &  C_5 & 0  & SS & 1 & 2L_1{\cdot} L_2 & \sqrt2 & 2 \\
\hline  
  \multicolumn{9}{|l|}{\boldsymbol{\Gcal_7}:\;f = [Z^2,X^2,Y^2]} \\ \cline{2-9}
\BS{8.1}  &  &  C_7\rtimes C_3 & 0  & S & 0 & L_1{\cdot} L_2{\cdot} L_3 & 2 & 4 \\
\hline  
\end{array}
\]
}
\caption{Dominant semistable degree 2 maps $\PP^2\dashrightarrow\PP^2$ with large automorphism group}
\label{table:maintable}
\end{table}

\begin{table}
{\tiny
\setlength{\parindent}{1em}    
\begin{parts}
\Part{\textbullet}
For each family of Type~$N.M$, Table~\ref{table:maintable} first gives
a formula for the maps in the family~$N.*$ and indicates by the
notation~$f\sim f'$ the $N(\Gcal)$-conjugacy equivalences between
maps. It then lists subfamilies~$M=1,2,\ldots$.  The columns in
Table~\ref{table:maintable} contain the following information\textup:
\begin{center}
  \begin{tabular}{|c|l|} \hline
    \multicolumn{2}{|l|}{\textbf{Key for Columns in Table \ref{table:maintable}}} \\  \hline \hline
    Coeffs & restrictions on the coefficients of $f$ \\ \hline
    Aut & the full automorphism group of $f$ \\ \hline
    dim & dimension of the familiy in $\Moduli_2^2$ \\ \hline
    St? & stability, with $S=\text{stable}$ and $SS=\text{semistable}$ \\ \hline
    $\#I$ & number of points in the indeterminacy locus of $f$ \\ \hline
    Crit & geometry of the critical locus of $f$ (see below for key) \\ \hline
    $\l_1$ & dynamical degree of $f$ (see Remark \ref{remark:l1l2def}) \\ \hline
    $\l_2$ & topological degree of $f$ \\ \hline
\end{tabular}
\end{center}
\Part{\textbullet}
Within each type, the maps in a given line are understood to exclude
the maps in all previous lines. So for example
maps of Type~1.4 exclude the case $b=-1$, which is covered by Type~1.2,
while Type~1.8 excludes maps satisfying $bdg\ne-1$ and $bdg\ne8$
Further, each line includes the indicated $\PGL_3$-equivalences,
so for example Type~1.4 includes both~$(b,b^{-1},-1)$ and~$(b,-1,b^{-1})$.
\Part{\textbullet}
Table~\ref{table:maintable} includes a few cases (Types~4.1,~5.1,~5.4) with
$\Aut(f)\supset\GG_m$. These help fill in the indicated family.
\Part{\textbullet}
The geometry of the critical locus is described by:
\[
\begin{array}{|r@{}l|r@{}l|} \hline
\multicolumn{4}{|l|}{\textbf{Key for $\boldsymbol{\Crit(f)}$}} \\  \hline \hline
  \G &= \text{smooth cubic curve} &
  L_1{\cdot} L_2{\cdot} L_3 &= \text{3 distinct lines} \\ \hline
  \G' &= \text{nodal cubic curve} &
  2L_1{\cdot} L_2 &= \text{double line}\cup\text{line} \\ \hline
  C{\cdot} L &= \text{conic}\cup\text{line} &
  3L &= \text{triple line} \\  \hline
\end{array}
\]
\Part{$*^1$}
We expect that maps of Type~1.4 satisfy $\l_1(f)=2$ and $\l_2(f)=4$.
\Part{$*^2$}
For generic values of~$b,d,g$ satisfying $bdg=-1$, we expect that maps
of Type~1.6 satisfy $\l_1(f)=2$ and $\l_2(f)=4$, but it seems likely
that there is a countable collection of~$(b,d,g)$ triples
satisfying~$\l_1(f)<2$ and~$\l_2(f)<4$.
\Part{$*^3$}
Experiments suggest that $\deg(f^n)$ is the $(n+2)$'nd Fibonacci number, which
would imply that $\l_1(f)=\frac12(1+\sqrt5)$.
\end{parts}  
}
\caption{Notes for Table~$\ref{table:maintable}$}
\label{table:expl}
\end{table}

The next corollary catalogs the complete list of finite groups that appear as automorphism
groups of semi-stable degee~$2$ maps of~$\PP^2$, as well as other information related to
the maps in Table~\ref{table:maintable}.

\begin{corollary}
\label{corollary:listofaut}
Let $K$ be an algebraically closed field of characteristic~$0$, and
let $f\in\Rat_2^2(K)$ be a semi-stable dominant rational map of
degree~$2$ with finite automorphism group.
\begin{parts}
\Part{(a)}
$\Aut(f)$ is isomorphic to one of the following nine groups\textup:
\begin{gather*}
  C_1,\quad C_2,\quad C_3,\quad C_4,\quad C_5,\quad
  C_2^2,\quad S_3,\quad S_4,\quad C_7\rtimes C_3.
\end{gather*}
\Part{(b)}
For  each group~$G$ in~\textup{(a)},
there exists a group $\Gcal\subset\PGL_3$ with $\Gcal\cong G$
and a map $f\in\Rat_2^2(\Gcal)^\ss$ such
that~$f$ is a dominant map of degree~$2$ satisfying $\Aut(f)=\Gcal$.
\Part{(c)}
Let $\Gcal\subset\PGL_3$ be a finite group that is \emph{not} isomorphic to one
of the following groups\textup:
\[
  C_1,\quad C_2,\quad C_3,\quad C_4,\quad C_2^2,\quad\text{or}\quad S_3.
\]
Then $\Moduli_2^2(\Gcal)^\ss$ contains only finitely many dominant degree~$2$ maps.  
\Part{(d)}
Let $f:\PP^2\dashrightarrow\PP^2$ be a  dominant degree~$2$ rational map
such that $\Aut(f)$ contains a copy of $C_2^2$ or $C_5$. Then~$f$ is not a morphism.
\end{parts}  
\end{corollary}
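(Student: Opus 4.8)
The plan is to obtain parts~(a)--(c) as essentially immediate consequences of Theorems~\ref{theorem:main1} and~\ref{theorem:main2} together with the data tabulated in Table~\ref{table:maintable}, and to prove part~(d) by a short direct argument about the fixed loci of the symmetries of~$f$.

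\emph{Parts (a) and (b).} If $\#\Aut(f)\le2$ then $\Aut(f)$ is $C_1$ or $C_2$, both on the list, so assume $\#\Aut(f)\ge3$ and set $\Gcal:=\Aut(f)$. Since $f$ is $\PGL_3$-semistable and $N(\Gcal)$ is a subgroup of $\PGL_3$, the point $f$ is also $N(\Gcal)$-semistable (semistability is inherited by subgroups), i.e.\ $f\in\Rat_2^2(\Gcal)^\ss$. Theorem~\ref{theorem:main1} then forces $\Gcal$ to be $\PGL_3$-conjugate to one of $\Gcal_3,\Gcal_4,\Gcal_5,\Gcal_7,\Gcal_{2,2}$, so after replacing $f$ by a $\PGL_3$-conjugate (harmless, since the claim concerns only the isomorphism type of $\Aut(f)$) Theorem~\ref{theorem:main2} applies: $f$ is $N(\Gcal)$-conjugate to one of the maps listed in Table~\ref{table:maintable}. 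As $N(\Gcal)$-conjugate maps have isomorphic automorphism groups, $\Aut(f)$ is isomorphic to an entry of the ``$\Aut$'' column; the three rows with entry $\GG_m\rtimes C_2$ are impossible here because $\Aut(f)$ is finite, and what remains is exactly $C_3,C_4,C_5,C_2^2,S_3,S_4,C_7\rtimes C_3$. Together with $C_1,C_2$ this gives~(a). For~(b), $C_1$ and $C_2$ are realised on the nose by a generic member of $\Rat_2^2$, resp.\ of the linear subspace $\Rat_2^2(\<\diag(1,1,-1)\>)$, since the locus where the automorphism group is strictly larger is a proper closed subset; and each of the remaining seven groups is realised by one of the explicit maps carrying that ``$\Aut$'' entry in Table~\ref{table:maintable} (e.g.\ Types~1.8, 3.4, 7.1, 5.3, 1.3, 6.1, 8.1 for $C_3,C_4,C_5,C_2^2,S_3,S_4,C_7\rtimes C_3$), each dominant of degree~$2$ and semistable by the construction of the table.

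\emph{Part (c).} Let $\Gcal\subset\PGL_3$ be finite and not isomorphic to any of $C_1,C_2,C_3,C_4,C_2^2,S_3$; in particular $\#\Gcal\ge3$. If $\Moduli_2^2(\Gcal)^\ss$ contains a dominant degree~$2$ map of finite automorphism group, lift it to some $f\in\Rat_2^2(\Gcal)^\ss$; Theorem~\ref{theorem:main1} then shows $\Gcal$ is conjugate to one of $\Gcal_3\cong C_3$, $\Gcal_4\cong C_4$, $\Gcal_5\cong C_5$, $\Gcal_7\cong C_7$, $\Gcal_{2,2}\cong C_2^2$, and by the hypothesis on $\Gcal$ only $\Gcal_5,\Gcal_7$ survive. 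For those two groups, Theorem~\ref{theorem:main2} and Table~\ref{table:maintable} show that the only dominant degree~$2$ maps in $\Rat_2^2(\Gcal)^\ss$ are the single maps of Types~7.1 and~8.1, so $\Moduli_2^2(\Gcal)^\ss$ contains only finitely many. (The exclusion list is sharp: $C_1,C_2$ obviously carry positive-dimensional families, and Types~1.8, 3.4, 1.5, 5.5 of Table~\ref{table:maintable} exhibit positive-dimensional families for $\Gcal$ isomorphic to $C_3,C_4,S_3,C_2^2$.) The point needing care is that this argument concerns maps of finite automorphism group; the dominant degree~$2$ maps with $\GG_m\subseteq\Aut(f)$ appearing in $\Moduli_2^2(\Gcal)^\ss$ must be handled separately, but these form only the families of Types~4.1, 5.1, 5.4, which occur (up to conjugacy) only for $\Gcal$ contained in $\Gcal_4$ or $\Gcal_{2,2}$, so they do not affect the $\Gcal$ under consideration.

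\emph{Part (d).} Suppose $f=[f_0:f_1:f_2]\colon\PP^2\dashrightarrow\PP^2$ is a degree~$2$ \emph{morphism}; then the $f_i$ have no common zero, $f$ is finite, and $f$ contracts no curve. If $C_2^2\subseteq\Aut(f)$, diagonalise the three involutions as $\diag(-1,1,1)$, $\diag(1,-1,1)$, $\diag(1,1,-1)$, with fixed lines $\{X=0\}$, $\{Y=0\}$, $\{Z=0\}$; since $f$ commutes with each involution and contracts no curve it maps each such line onto itself, forcing $X\mid f_0$, $Y\mid f_1$, $Z\mid f_2$, say $f=[XL_0:YL_1:ZL_2]$ with $L_i$ linear. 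Re-imposing $f^{\diag(-1,1,1)}=f$ forces either that no $L_i$ involves $X$---whence all of $f_0,f_1,f_2$ vanish at $[1:0:0]$, contradicting that $f$ is a morphism---or that each $L_i=a_iX$, whence $f=X\cdot[a_0X:a_1Y:a_2Z]$ has degree~$1$. Either way a contradiction, so a degree~$2$ morphism has no $C_2^2$ in its automorphism group. If instead $C_5\subseteq\Aut(f)$, write a generator in diagonal form $\sigma=\diag(1,\z_5^{a},\z_5^{b})$; when two eigenvalues coincide $\sigma$ fixes a line and one argues as above, and otherwise, after replacing $\sigma$ by a power and $f$ by a $\PGL_3$-conjugate, one may take $\sigma=\diag(1,\z_5,\z_5^{\,j})$ with $j\in\{2,3,4\}$. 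Commuting with $\sigma$ forces each $f_i$ to be a sum of quadratic monomials of a single $\sigma$-weight, and running through the finitely many choices of $j$ and of the common weight of $f_0$ one finds in every case that $f_0,f_1,f_2$ have a common zero (or a common factor, or fail to be linearly independent)---again contradicting that $f$ is a degree~$2$ morphism. As a check, every row of Table~\ref{table:maintable} whose automorphism group contains $C_2^2$ or $C_5$ has ``$\#I\ge1$''. The genuine work in this corollary is bookkeeping rather than mathematics: in~(a) and~(c) one must carefully distinguish $\Gcal$ from $\Aut(f)$ and handle the maps with infinite automorphism group, and the one step requiring real (if routine) computation is the exhaustive weight analysis for the $C_5$ case of~(d).
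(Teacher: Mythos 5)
Your route for (a)--(c) is the paper's (feed the hypotheses into Theorems~\ref{theorem:main1} and~\ref{theorem:main2} and read the answer off Table~\ref{table:maintable}), but two steps as written do not hold up, and a third is factually wrong. First, in (a) you set $\Gcal:=\Aut(f)$ and invoke Theorem~\ref{theorem:main1} to conclude that $\Gcal$ is $\PGL_3$-conjugate to one of $\Gcal_3,\Gcal_4,\Gcal_5,\Gcal_7,\Gcal_{2,2}$. Taken literally this contradicts your own conclusion: Table~\ref{table:maintable} contains maps with $\Aut(f)\cong S_4$ (Type~6.1) and $\cong C_7\rtimes C_3$ (Type~8.1), and a group of order $24$ or $21$ is not conjugate to a group of order at most $7$. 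The classification can only be applied to a suitably small subgroup of $\Aut(f)$; this is exactly why the paper opens with Cauchy's theorem and the first Sylow theorem to produce a subgroup $\Gcal\subseteq\Aut(f)$ of odd prime order or of order~$4$ (hence $\cong C_4$ or $C_2^2$), classifies \emph{that} $\Gcal$, and only then reads the full automorphism group off the Aut column. Second, in (b) your realization of $C_2$ is circular: the assertion that the locus with $\#\Aut(f)>2$ is a \emph{proper} closed subset of $\Rat_2^2(\<\diag(1,1,-1)\>)$ is precisely the existence statement to be proved, and you produce no witness (the openness result you implicitly lean on covers only the trivial-automorphism locus in all of $\Rat_2^2$, which handles $C_1$ but not $C_2$). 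The paper closes this by exhibiting the family $f_{u,v,w}=[uY^2+vZ^2,\,XY,\,wY^2+2XZ]$, observing that $\Aut(f_{0,1,1})\cong C_4$ and $\Aut(f_{1,-1,0})\cong C_2^2$ force the generic automorphism group to have order at most $2$, while $[X,-Y,Z]\in\Aut(f_{u,v,w})$ for all parameters.

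Third, the parenthetical patch in (c) concerning maps with $\GG_m\subseteq\Aut(f)$ is false: $\GG_m$ contains a cyclic subgroup of every order, so the semistable dominant degree-$2$ family $[aX^2+YZ,bXY,cXZ]$ of Proposition~\ref{proposition:C5C7etc}(c) lies in $\Rat_2^2(\Gcal)$ for suitable $\Gcal\cong C_5$, $C_7$, etc.; these maps are not confined to subgroups of $\Gcal_4$ or $\Gcal_{2,2}$, and they are not all listed in Table~\ref{table:maintable} (the table only includes a few $\GG_m$-cases to fill out families). Part (c) must be read under the corollary's blanket hypothesis that $\Aut(f)$ is finite, in which case your first paragraph of (c) already suffices and the infinite-automorphism maps are out of scope; your separate handling of them, as written, would not rescue an unrestricted statement. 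On the positive side, your argument for (d) is a correct and genuinely different proof: the paper obtains (d) by inspecting the table (every entry whose automorphism group contains $C_2^2$ or $C_5$ has $\#I(f)\ge1$), whereas your fixed-line argument for $C_2^2$ and weight analysis for $C_5$ are self-contained and do not even invoke semistability. The only soft spot there is the phrase ``one argues as above'' in the repeated-eigenvalue $C_5$ case, where what is actually needed is a short weight computation showing that all three coordinates acquire a common zero or a common factor, not a repetition of the $C_2^2$ argument.
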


We briefly explain the strategy that we employ to classify maps with
large automorphism group:
\begin{parts}
\Part{\textbullet}
Assume that $\Aut(f)$ contains a subgroup isomorphic to some
finite group~$G$ satisfying~$\#G\ge3$.
\Part{\textbullet}
Classify the conjugacy classes~$\Gcal_1,\ldots,\Gcal_k$ of finite
subgroups of $\PGL_3(K)$ that are isomorphic to~$G$ and choose a
(nice) representative group $G_i\subset\PGL_3(K)$ in~$\Gcal_i$ for
each $1\le i\le k$.\footnote{The complete classification of finite subgroups
  of~$\PGL_3(K)$ is classical, but for completeness we include the
  short proof of the part that we need.}
\Part{\textbullet}
For each~$G_i$, decompose the set of~$f\in\Rat_2^2$ satisfying
$G_i\subseteq\Aut(f)$ into a disjoint union of irreducible
families~$\Fcal_{i,1},\Fcal_{i,2},\ldots\subset\Rat_2^2$.
For example,  in the (typical) case that~$G_i$ is a group of diagonal
matrices, the various~$\Fcal_{i,j}$ are characterized by the
eigenvalues of the generators of~$G_i$ acting on the monomials in the
coordinates of~$f$.
\Part{\textbullet}
By inspection, determine which $f\in\Fcal_{i,j}$ are dominant.
\Part{\textbullet}
Use the numerical criterion of Mumford--Hilbert to determine the set
of semi-stable $f\in\Fcal_{i,j}$.\footnote{We remark that in many
  cases it turns out that~$\Fcal_{i,j}$ contains no dominant
  semi-stable maps. Indeed there are conjugacy classes
  with~$\Gcal\cong C_3$ and~$\Gcal\cong C_4$ that contain no dominant
  semi-stable maps.}
\Part{\textbullet}
It remains to determine the full automorphism group for dominant
semi-stable maps in $f\in\Fcal_{i,j}$, or more generally, to determine
\[
  \Hom(f,f') := \bigl\{\f\in\PGL_3(K) : f'=f^\f\bigr\}
  \quad\text{for $f,f'\in\Fcal_{i,j}$.}
\]
(Taking~$f'=f$ gives~$\Aut(f)$.) A key tool in this endeavor is to
exploit the fact that every~$\f\in\Hom(f,f')$ induces an isomorphism
of the associated indeterminacy and critical loci,
\[
  I(f) \xrightarrow[^\sim]{\;\;\f\;\;} I(f') \quad\text{and}\quad
  \Crit(f)\xrightarrow[^\sim]{\;\;\f\;\;}\Crit(f').
\]
These isomorphisms impose restrictions on~$\f$ which can be used as
the starting point of a case-by-case determination of~$\Hom(f,f')$
and~$\Aut(f)$.
\end{parts}

\begin{remark}
We offer some further brief comments on the final step.
If~$I(f)=I(f')$ is a finite set of points, then~$\f\in\Hom(f,f')$
induces a permutation of these points, and similarly if
$\Crit(f)=\Crit(f')$ is a union of (three) lines, or the union of a
conic and a line, etc., then~$\f$ induces a permutation of these
geometric configurations.  However, there are three cases,
Type~1.5,~1.8, and~3.4 in Table~\ref{table:maintable}, for
which~$I(f)=\emptyset$ and~$\Crit(f)$ is a smooth cubic.  For Types~1.5
and~1.8 we exploit the fact that every~$\f\in\Aut(f)$ permutes the~$9$
flex points of the smooth cubic curve~$\Crit(f)$. This leads to
several hundred cases, which we check by computer. For Type~3.4 we
take a slightly different approach by first showing that~$\Crit(f)$ is
an elliptic curve with CM by~$\ZZ[i]$, and that if~$\f\in\Aut(f)$,
then $\f:\Crit(f)\to\Crit(f)$ is translation by a $3$-torsion
point~$P_0$. We next prove that if~$P_0\ne0$, then~$\Aut(f)$ would
contain a copy of~$C_3^2$, contradicting an earlier calculation. This
allows us to conclude that~$\Aut(f)\cong \ZZ[i]^*\cong C_4$.
\end{remark}


\begin{remark}
We take a moment to record some additional interesting properties of some
of the maps in Table~$\ref{table:maintable}$.
\begin{parts}
\Part{(a)}
The maps $f_{c} = [YZ,X^2+cZ^2,XY]$ of Types~4.1--4.4 satisfy
\begin{align*}
  \deg(f_c^n)=n+1 &\quad\text{if $c\ne\pm1$,} \\
  f_c^{2k}=[X,c^kY,Z]&\quad\text{if $c=\pm1$.}
\end{align*}
In all cases, the second iterate satisfies
$\Aut(f_c^2)\supseteq\GG_m$.  This gives a family of examples of maps
with~$\Aut(f)$ finite and~$\Aut(f^2)$ infinite.  See
Proposition~\ref{proposition:c4f2autgm}.
\Part{(b)}
The maps $f_{0,e}=[Y^2-Z^2,XY,eXZ]$ of Types~5.1--5.3 satisfy
\begin{align*}
  \deg(f_{0,e}^n)=n+1 &\quad\text{if~$e^2$ is not an odd-order root of unity,} \\
  f_{0,e}^{4k+2}=[X,Y,Z]&\quad\text{if $e^{4k+2}=1$.}
\end{align*}  
See Proposition~\ref{proposition:fY2Z2XYeXZ}.
\Part{(c)}
The map~$f=[YZ,X^2,Y^2]$ of Type~$7.1$ with $C_5\cong\Aut(f)$
has the property that $f^8=[X^{16},Y^{16},Z^{16}]$.
\Part{(d)}
The map~$f=[Z^2,X^2,Y^2]$ of Type~$8.1$ with $C_7\subset\Aut(f)$
has the property that $f^3=[X^{8},Y^{8},Z^{8}]$.
\Part{(e)}
The maps of Type~1.2,~4.2,~5.2, and~6.1 are $\PGL_3(K)$-conjugate to one another and
have the property that $f^2=[X,Y,Z]$.  See
Example~\ref{example:notinj}.
\end{parts}   
\end{remark}

\begin{example}
\label{example:notinj}
Consider the following maps from Table~\ref{table:maintable}:
\begin{align*}
  f_{1.2} &:= [X^2-YZ,Z^2-XY,Y^2-XZ], &&\text{Type 1.1} \\
  f_{4.2} &:= [YZ,X^2+Z^2,XY],  &&\text{Type 4.2} \\
  f_{5.2} &:= [Y^2-Z^2,XY,-XZ], &&\text{Type 5.2} \\
  f_{6.1} &:= [YZ,XZ,XY], &&\text{Type 6.1} .
\end{align*}  
One easily checks that
\[
  f_{1.2}\in\Rat_2^2(\Gcal_3),\quad
  f_{4.2}\in\Rat_2^2(\Gcal_4),\quad
  f_{5.2},f_{6.1}\in\Rat_2^2(\Gcal_{2,2}).
\]
Further, we find that all four maps are~$\PGL_3$-conjugate. Explicitly
\[
  f_{1.2}^\a = f_{4.2}^\b = f_{5.2}^\g = f_{6.1}
\]
for
\[
  \a = \SmallMatrix{1&1&1\\\z_3^2&\z_2&1\\\z_3&\z_3^2&1\\},\quad
  \b = \SmallMatrix{0&\z_8^2&1\\ -2\z_8&0&0\\ 0&1&\z_8^2\\},\quad
  \g = \SmallMatrix{2&0&0\\ 0&1&-1\\ 0&1&1\\},
\]
where~$\z_n$ denotes a primitive $n$'th root of unity.

Theorem~\ref{theorem:main2} says that~$f_{5.2}$ and~$f_{6.1}$ both
satisfy $\Aut(f)=\Scal_3\Gcal_{2,2}$, where~$\Scal_3\subset\PGL_3$ is
the group of permutation matrices. In particular,~$f_{5.2}$
and~$f_{6.1}$ are $N(\Scal_3\Gcal_{2,2})$-conjugate, since~$\g$
normalizes~$\Scal_3\Gcal_{2,2}$, but they are
not~$N(\Gcal_{2,2})$-conjugate, since~$\g$ does not
normalize~$\Gcal_{2,2}$. Thus~$f_{5.2}$ and~$f_{6.1}$ represent
different points in $\Rat_2^2(\Gcal_{2,2})^\ss$, but they define the
same point in $\Rat_2^2(\Scal_3\Gcal_{2,2})^\ss$.
\end{example}

\begin{remark}
\label{remark:cremonagp}
A referee has pointed out that for rational maps~$f$ that are not
morphisms, it would be interesting, and possibly more natural, to
compute the group of birational automorphisms of~$\PP^2$ that commute
with~$f$. Writing $\operatorname{BiRat}(\PP^2)$ for the \emph{Cremona
  group}, one might try to classify dominant, semi-stable degree~$2$
maps~$f$ for which the group
\[
  \operatorname{BiAut}(f) :=
  \bigl\{\f\in\operatorname{BiRat}(\PP^2) : f^\f = f \bigr\}
\]
is finite and, say, has order at least~$3$.  A starting point would be
the known classification of the finite subgroups of the Cremona
group~\cite{MR2641179,MR2648675}.  For example, a number of our maps
with large~$\Aut(f)$ are themselves elements of order~$2$ in the
Cremona group, a typical example being the map $f=[YZ,XZ,XY]$
labeled~(6.1) in Table~\ref{table:maintable}.  In these
cases~$\operatorname{BiRat}(f)$ is at least as large as $\Aut(f)\times C_2$,
with the extra~$C_2$ being generated by~$f$.  The analysis of maps
with large finite $\operatorname{BiAut}(f)$ seems like an interesting
problem that deserves further study, but in view of the length of the
present paper, we will not address it at this time.
\end{remark}

\begin{remark}
A referee has pointed out that the present paper is close in spirit to
the classification by Forn\ae ss and Wu~\cite{MR1628170} of degree~$2$
polynomial automorphisms of~$\CC^3$, and work of Cerveau and
D\'eserti~\cite{MR3155973} describing birational maps, especially of
degrees~$2$ and~$3$, of~$\PP^2$, although we note that the latter paper
studies the two-sided action of $\PGL_3\times\PGL_3$, which leads to a
different, albeit also very interesting, classification problem.
\end{remark}

\section{Background}
\label{section:background}
We briefly summarize some of the existing literature on the study
of~$(\Moduli_d^N)^\stab$ and~$(\Moduli_d^N)^\ss$.  A fair amount is
known in the case that $N=1$. For example, it is known
that~$(\Moduli_d^1)^\stab$ and~$(\Moduli_d^1)^\ss$ are rational
varieties~\cite{MR2741188}. And for~$N=1$ and~$d=2$ there are natural
isomorphisms~$(\Moduli_2^1)^\stab=(\Moduli_2^1)^\ss\cong\PP^2$, with
the set of maps $\bigl\{f\in (\Moduli_d^1)^\stab:\deg f=2\bigr\}$
corresponding to~$\AA^2$. See~\cite{milnor:quadraticmaps} for the
proof over~$\CC$ and~\cite{silverman:modulirationalmaps} for the proof
over~$\Spec\ZZ$. For degree~$2$ morphisms~$f:\PP^1\to\PP^1$, the
group~$\Aut(f)\subset\PGL_2$ is isomorphic to either~$C_1$,~$C_2$,
or~$S_3$.  The locus of~$f\in\AA^2$ with~$C_2\subset\Aut(f)$ is a
cuspidal cubic curve, with the cusp corresponding to the only~$f$
having~$\Aut(f)\cong S_3$; see~\cite[Proposition~4.15]{MR2884382}.
For similar results on~$\Moduli_3^1$, see~\cite{arxiv1408.3247}.  More
generally, for~$N=1$ and~$d\ge3$, the singular locus
of~$(\Moduli_d^1)^\stab$ is exactly the set of~$f$ with~$\Aut(f)\ne1$;
see~\cite{arxiv1408.5655} for this result and for 
a calculation of the dimension
and Picard and class groups of
\[
  \Moduli_d^1(\Gcal)^\stab
  := \bigl\{ f \in (\Moduli_d^1)^\stab : \Gcal \subseteq \Aut(f) \bigr\}
  \quad\text{for $\Gcal\subset\PGL_2$.}
\]
(Note that here~$f$ represents a conjugacy class of maps, so~$\Aut(f)$ is a
conjugacy class of subgroups of~$\PGL_2$.)

For all~$N\ge1$ it is known that
$\bigl\{f\in\Moduli_d^N:\Aut(f)=1\bigr\}$ is a non-empty Zariski open
subset of $\Moduli_d^N$; see~\cite{MR2741188}.  Thus ``most'' maps~$f$
have no automorphisms. On the other hand, those~$f$ with~$\Aut(f)\ne1$
are of particular arithmetic interest, since they tend to have
non-trivial $\Kbar/K$-twists, i.e.,  families of maps
that are~$\PGL_{N+1}(\Kbar)$-conjugate, but
not~$\PGL_{N+1}(K)$-conjugate.  There has been a
considerable amount of work studying dynamical twist families and
related problems having to do with fields of definition and fields of
moduli; see for example~\cite{MR3223364,MR2407816,
  silverman:fieldofdef,MR3273498},
\cite[Chapter~7]{silverman:modulirationalmaps},
\cite[Sections~4.7--4.10]{MR2316407}.

For $N\ge2$, there has been some progress.  It is known that if
$f:\PP^N\to\PP^N$ is a morphism of degree at least~$2$, then~$\Aut(f)$
is finite; see \cite{MR2567424}. For~$N=1$ and~$2$, there are explicit
bounds. For example, a morphism $f:\PP^2\to\PP^2$ satisfies
$\#\Aut(f)\le6d^6$; see~\cite[Theorem~6.2]{arxiv1509.06670}. It is
also known that for every finite subgroup $\Gcal\subset\PGL_{N+1}(\Kbar)$,
there are infinitely many morphisms $f:\PP_\Kbar^N\to\PP_\Kbar^N$ of
degree~$\ge2$ such that~$\Aut(f)\supseteq \Gcal$;
see~\cite[Theorem~4.7]{arxiv1509.06670}. However, the situation is
more delicate if one or more of the following natural
conditions is imposed:
\begin{parts}
\Part{\textbullet}
$\Aut(f)$ is exactly equal~$\Gcal$.
\Part{\textbullet}
The degree of~$f$ is specified.
\Part{\textbullet}
The map~$f$ is defined over a non-algebraically closed field~$K$.
\end{parts}
For example, every subgroup of~$\PGL_2$ except the tetrahedral group
can be realized by a map $f:\PP^1\to\PP^1$ defined over~$\QQ$;
see~\cite[Theorem~4.9]{arxiv1509.06670}.  And
\cite[Section~8.1]{arxiv1509.06670} gives examples of morphisms
$f:\PP^2\to\PP^2$ defined over~$\QQ$ with very large automorphism
groups. We also mention that \cite[Section~5]{arxiv1509.06670}
contains a nice summary, in modern notation and with explicit
generators, of the classical classification of finite subgroups of
$\PGL_3(\CC)$.

\begin{remark}
\label{remark:l1l2def}
Two important invariants associated to dominant rational maps
$f:\PP^2\dashrightarrow\PP^2$ are the \emph{dynamical degree}

\[
  \l_1(f) := \lim_{k\to\infty} (\deg f^{\circ k})^{1/k}
\]
and the \emph{topological degree} 
\[
  \l_2(f) := \#f^{-1}(P)\quad\text{for a generic point $P\in\PP^n(K)$.}
\]
The map~$f$ is said to be \emph{algebraically stable}
if~$\l_1(f)=\deg(f)$.  There is a large literature studying dynamical
degrees, algebraic stability, and the existence of invariant measures,
of which we mention two articles. The first~\cite{MR2358970} gives a
precise formula for the dynamical degree of a monomial map.  The
second~\cite{MR2097402} classifies degree two polynomial
maps~$f:\AA^2_\CC\to\AA^2_\CC$ and shows that, up to affine conjugacy,
there are~$13$ families of such maps, all of which extend to an
algebraically stable map on either~$\PP^2$ or~$\PP^1\times\PP^1$.
\end{remark}

\section{Scope of This Paper and Further Questions}
\label{section:questions}
The original goal of this paper was to completely describe the moduli
spaces $\Moduli_2^2(\Gcal)^\ss$ and~$\Moduli_2^2(\Gcal)^\stab$ over an
arbitrary field~$K$, and more generally over~$\Spec R$ for an
appropriately chosen ring~$R$.  This analysis would have included
giving normal forms for~$N(\Gcal)$-conjugacy classes of maps, and it
would have included classifying semi-stable maps that are not dominant
or have degree~$1$.  This turned out to be overambitious, as we
realized when the analysis of the case $\Gcal\cong C_2^2$
approached~$50$ pages and it became clear that the cases~$\Gcal\cong
C_4$ and~$\Gcal\cong C_3$ were going to be even more
complicated. Further, if~$K$ has positive characteristic, then one
must also deal with finite cyclic subgroups of~$\PGL_3(K)$ that are
not diagonalizable, adding another level of complication.

We thus decided to restrict attention to algebraically closed fields
of characteristic~$0$ and to restrict attention to maps that are
dominant and have degree~$2$, since these are the maps whose iterates
potentially have interesting dynamics.  This curtailed goal ended up
being sufficiently challenging, as the length of the present paper
attests. However, we propose the following problems as deserving 
study in future papers and/or a monograph.

\begin{parts}
\Part{(1)}
Describe the geometry of the moduli spaces~$\Moduli_2^2(\Gcal)^\ss$
and $\Moduli_2^2(\Gcal)^\stab$, and the geometry of the natural map
$\Moduli_2^2(\Gcal)^\ss\to \Moduli_2^2(\Gcal')^\ss$ for
subgroups $\Gcal'\subset\Gcal$.
\Part{(2)}
Determine the field of moduli and minimal fields of definition for
points in $\Moduli_2^2(\Gcal)^\ss$, where $\Gcal\subset\PGL_3(K)$ is a
finite subgroup and~$K$ is minimal for the conjugacy class of~$\Gcal$.
\Part{(3)}
Let $f:\PP^2\dashrightarrow\PP^2$ be defined over~$K$.
We recall that
the set of~$\Kbar/K$-twists of~$f$ is the set of maps~$f'$ defined over~$K$
that are~$\PGL_3(\Kbar)$-conjugate to~$f$, modulo~$f'$ and~$f$ being considered equivalent
if they are~$\PGL_3(K)$-conjugate. The set of twists is classified by the kernel
of the inflation map
\[
  H^1\bigl(\Gal(\Kbar/K),\Aut(\f)\bigr) \longrightarrow
  H^1\bigl(\Gal(\Kbar/K),\PGL_3(\Kbar)\bigr);
\]
see~\cite[Section~7.1]{MR2884382}.  Find normal forms for the twists
of the maps in Table~\ref{table:maintable}.
\Part{(4)}
Classify rational maps~$f$ having large finite birational automorphism
groups, as described in Remark~\ref{remark:cremonagp}.
\end{parts}

\begin{example}
We illustrate twisting with an example.  Consider the map
$f=[YZ,X^2,Y^2]$ of Type~7.1. Up to~$\PGL_3(\Qbar)$-conjugacy, this is
the only map in~$(\Moduli_2^2)^\ss$ whose automorphism group is finite
and contains an element of order~$5$.  The isomorphism
\[
  \bfmu_5\longrightarrow\Aut(f),\quad \z\longmapsto \f_\z=[X,\z Y,\z^3 Z],
\]
is~$\Gal(\Qbar/\QQ)$-invariant, and it turns out that every element of
\[
  H^1\bigl(\Gal(\Qbar/\QQ),\bfmu_5)\cong\QQ^*/(\QQ^*)^5
\]
gives a twist of~$f$. Precisely, let~$b\in\QQ^*$,
let~$\b=b^{1/5}\in\Qbar$, and let $\psi=[X,\b Y,\b^3 Z]$. Then the
twist of~$f$ associated to~$b$ is
\begin{gather*}
  f_b := f^{\psi}(X,Y,Z) = \psi^{-1}\circ f\circ\psi(X,Y,Z)
  = \psi^{-1}\circ f(X,\b Y,\b^3 Z) \\
  = \psi^{-1}(\b^4 Y Z, X^2, \b^2 Y^2)
  = [\b^4 Y Z, \b^{-1} X^2, \b^{-1} Y^2]
  = [b Y Z, X^2, Y^2].
\end{gather*}
Note that~$f_b$ is defined over~$\QQ$, but that the map~$\psi$ conjugating~$f$ to~$f_b$
is only defined over~$\QQ(b^{1/5})$.

Similarly, the~$\bfmu_7$-twist of the map~$f=[Z^2,X^2,Y^2]$ associated
to $b\in \QQ^*/(\QQ^*)^7 \cong H^1\bigl(\Gal(\Qbar/\QQ),\bfmu_7)$ is
$[bZ^2,X^2,Y^2]$.  We leave the details of the computation to the
reader.
\end{example}  

\section{Some Finite Subgroups of $\PGL_3$}
\label{section:finitesubgpspgl3}

In this section we prove some elementary results concerning 
finite subgroups of~$\PGL_3$. This information may be gleaned from
classical descriptions of all finite subgroups of~$\PGL_3$, but for
completeness we shall prove what we need.

\begin{lemma}
\label{lemma:GinPGL3}  
Let~$K$ be an algebraically closed field of characteristic~$0$,
and let $G\subset\PGL_3(K)$ be a finite subgroup.
\begin{parts}
\Part{(a)}
Suppose that $G\cong C_q$ with $q$ a prime power.
Then there is a~$\f\in\PGL_3(K)$, a primitive $q$'th
root of unity $\z\in K$, and an integer~$m$ such that
\begin{equation}
  \label{eqn:Gf100z}
  G^\f  = \left\< \SmallMatrix{1&0&0\\0&\z&0\\0&0&\smash[t]{\z^m}\\} \right\>.
\end{equation}
\Part{(b)}
Suppose that $G\cong C_q$ with $q\in\{4,5,7\}$.  Then there is
a~$\f\in\PGL_3(K)$ and a primitive $q$'th root of unity~$\z$ such that
\begin{align*}
  G\cong C_4 &\quad\Longrightarrow\quad
    G^\f = \left\< \SmallMatrix{1&0&0\\0&\z&0\\0&0&1\\} \right\>
    \;\text{or}\;
    \left\< \SmallMatrix{1&0&0\\0&\z&0\\0&0&-1\\} \right\>, \\
  G\cong C_5 &\quad\Longrightarrow\quad
    G^\f = \left\< \SmallMatrix{1&0&0\\0&\z&0\\0&0&1\\} \right\>
    \;\text{or}\;
    \left\< \SmallMatrix{1&0&0\\0&\z&0\\0&0&\z^3\\} \right\>, \\
  G\cong C_7 &\quad\Longrightarrow\quad
    G^\f = \left\< \SmallMatrix{1&0&0\\0&\z&0\\0&0&1\\} \right\>
    \;\text{or}\;
    \left\< \SmallMatrix{1&0&0\\0&\z&0\\0&0&\z^2\\} \right\>
    \;\text{or}\;
    \left\< \SmallMatrix{1&0&0\\0&\z&0\\0&0&\z^3\\} \right\>.
\end{align*}
\Part{(c)}
Suppose that $G\cong C_p\times C_p$ with $p$ prime, and
let~$\z$ be a primitive $p$'th root of unity.
Then there is a~$\f\in\PGL_3(K)$ such that one of the following
is valid\textup:
\begin{align}
  G^\f  &= \left\< \SmallMatrix{1&0&0\\0&\z&0\\0&0&1\\},\;
  \SmallMatrix{1&0&0\\0&1&0\\0&0&\z\\} \right\>,
  &&\text{$p$ arbitrary.}
  \label{eqn:Lemma10c1}  \\
  G^\f  &= \left\< \SmallMatrix{1&0&0\\0&\z&0\\0&0&\z^2\\},\;
  \SmallMatrix{0&1&0\\0&0&1\\1&0&0\\} \right\>, 
  &&\text{$p=3$ only.}
  \label{eqn:Lemma10c3}  
\end{align}
Further, the group~\eqref{eqn:Lemma10c1} with $p=3$ is
not~$\GL_3(K)$-conjugate to the group~\eqref{eqn:Lemma10c3}.
\end{parts}  
\end{lemma}
\begin{proof}
We remark that if $\a\in\PGL_3(K)$ has finite order~$n$, then we can
lift it to an element~$A\in\GL_3(K)$ having the same order.  To see
this, we start with an arbitrary lift~$A$. Then~$A^n=c I$ for some
$c\in K^*$, so we can take $c^{-1/n}A$ as our lift of~$\a$. We also
remark that since we have assumed that $\operatorname{char}(K)=0$,
every element in~$\GL_3(K)$ of finite order is diagonalizable.
\par\noindent(a)\enspace  
Let $G=\<\a\>$ with~$\a\in\PGL_3(K)$ having order~$q$.  We lift~$\a$
to an~$A\in\GL_3(K)$ of order~$q$.  Conjugating~$A$ to put it into
Jordan normal form, the fact that~$A^q=1$ implies that~$A$ is
diagonal and its diagonal entries are $q$'th roots of unity.
Replacing~$A$ by a scalar multiple, which we may do since we are
really only interested in the image of~$A$ in~$\PGL_3(K$), we may
assume that the upper left entry of~$A$ is~$1$. (Note that we still
have~$A^q=I$.)  The fact that~$\a$ has
exact order~$q$, where~$q$ is a prime power, implies that one of the
other diagonal entries is a primitive $q$'th root of unity, which we
denote~$\z$. Possibly after reversing the~$Y$ and~$Z$
coordinates,~$\a$ is diagonal with entries~$1,\z,\eta$, where~$\eta$,
being a $q$'th root of unity, is a power of~$\z$.
\par\noindent(b)\enspace
From~(a), we can find~$\f$ so that~$G^\f$ is given
by~\eqref{eqn:Gf100z} with $0\le m<q$. For notational convenience, we let
\[
  \t(m) :=    \SmallMatrix{\smash[t]{1}&0&0\\0&\smash[t]{\z}&0\\0&0&\smash[t]{\z^m}\\} .
\]
We note that conjugation by a permutation matrix in~$\PGL_3$ has
the effect of permuting the entries of a diagonal matrix. Writing~$\sim$
to denote~$\PGL_3$-conjugation equivalence and using the fact that we are
working in~$\PGL_3$, we have
\begin{align*}
  \t(m)
  &\sim 
  \left\< \SmallMatrix{\smash[t]{\z}&0&0\\0&\smash[t]{1}&0\\0&0&\smash[t]{\z^m}\\} \right\>
  =
  \left\< \SmallMatrix{\smash[t]{1}&0&0\\0&\smash[t]{\z^{-1}}&0\\0&0&\smash[t]{\z^{m-1}}\\} \right\>
  =
  \t(1-m\bmod q), \\
  \t(m)
  &\sim 
  \left\< \SmallMatrix{\smash[t]{1}&0&0\\0&\smash[t]{\z^m}&0\\0&0&\smash[t]{\z}\\} \right\>
  =
  \t(m^{-1}\bmod q) \quad\text{if $\gcd(m,q)=1$.}
\end{align*}
(We remark that the other three permutations do not give results that are
useful for our purposes.)  Hence after a further conjugation by a
permutation matrix, we may take~$G^\f$ to be generated by any one of
the following three matrices,
\[
  \t(m),\quad \t(1-m\bmod q),\quad \t(m^{-1}\bmod q),
\]
subject to $\gcd(m,q)=1$ for the last one.

Suppose first that~$q=4$. Taking $m=0$ and $m=2$, we find that
\[
  \bigl\<\t(0)\bigr\>\sim \bigl\<\t(1)\bigr\>
  \quad\text{and}\quad
  \bigl\<\t(2)\bigr\>\sim \bigl\<\t(3)\bigr\>.
\]
Hence we can find a~$\f$ so that~$G^\f$ is generated by either~$\t(0)$ or~$\t(2)$.

Next let $q=5$. Then
\[
  \bigl\<\t(0)\bigr\>\sim \bigl\<\t(1)\bigr\>
  \quad\text{and}\quad
  \bigl\<\t(2)\bigr\>\sim \bigl\<\t(4)\bigr\>\sim \bigl\<\t(3)\bigr\>.
\]
Hence we can find a~$\f$ so that~$G^\f$ is generated by either~$\t(0)$ or~$\t(3)$.

Finally let $q=7$. Then
\[
  \bigl\<\t(0)\bigr\>\sim \bigl\<\t(1)\bigr\>,\quad
  \bigl\<\t(2)\bigr\>\sim \bigl\<\t(6)\bigr\>\sim \bigl\<\t(4)\bigr\>
  \quad\text{and}\quad
  \bigl\<\t(3)\bigr\>\sim \bigl\<\t(5)\bigr\>.
\]
Hence we can find a~$\f$ so that~$G^\f$ is generated by either~$\t(0)$ or~$\t(2)$
or~$\t(3)$.
\par\noindent(c)\enspace
Let $\a,\b\in G$ be generators of~$G$. We lift~$\a$ and~$\b$,
respectively, to matrices~$A,B\in\GL_3(K)$ satisfying $A^p=B^p=I$.
The fact that $\a\b=\b\a$ in~$\PGL_3(K)$ tells us that there is an
$\e\in K^*$ such that $AB=\e BA$ in~$\GL_3(K)$.

We start with the case that~$\e=1$, so we have diagonalizable
matrices~$A,B\in\GL_3(K)$ satisfying $AB=BA$.  Standard linear algebra
says that they can be simultaneously diagonalized, so after
conjugation, we may assume that~$A$ and~$B$ are both diagonal. And
since $A^p=B^p=I$ and the image of~$\<A,B\>$ in~$\PGL_3(K)$ is of
type~$C_p^2$, we see that the group
\[
  \<\z I,A,B\> = \{\z^i A^j B^k : 0\le i,j,k\le p-1\} \subset \GL_3(K)
\]
contains~$p^3$ distinct diagonal elements of~$\GL_3(K)$ 
of order dividing~$p$. But~$\GL_3(K)$ contains exactly~$p^3$ diagonal
matrices of order dividing~$p$, namely the diagonal matrices with entries
that are arbitrary $p$'th roots of unity. It follows
that~$G=\<\a,\b\>$, which is the image of~$\<\z I,A,B\>$ in~$\PGL_3(K)$,
is the group described in~\eqref{eqn:Lemma10c1}.

We next suppose that~$\e\ne1$.  Applying~(a) to~$\<\a\>$, we can
conjugate so that~$\a$ lifts to a matrix of the form
$A=\SmallMatrix{1&0&0\\ 0&\z&0\\ 0&0&\smash[t]{\z^m}\\}\in\GL_3(K)$,
where~$\z$ is a primitive $p$'th root of unity and $0\le m<p$.
Writing the lift~$B$ of~$\b$ with generic entries, the relation $AB=\e
BA$ becomes
\[
  \SmallMatrix{ a & b & c \\ d & e & f \\ g & h & i \\ }
  = B
  = \e A^{-1}BA
  = \e \SmallMatrix{ a & \z b & \smash[t]{\z^m} c \\
    \smash[t]{\z^{-1}}d & e & \smash[t]{\z^{m-1}}f \\
    \smash[t]{\z^{-m}}g & \smash[t]{\z^{1-m}}h & i \\ }.
\]
The assumption that~$\e\ne1$ forces~$a=e=i=0$. Since~$B$ is invertible,
we see that either~$b\ne0$ or~$c\ne0$. For the former, we find that
\begin{equation}
  \label{eqn:bne0ez1}
  b\ne0
  \quad\Longrightarrow\quad
  \e=\z^{-1}
  \quad\Longrightarrow\quad
  \left\{
  \begin{aligned}
    (1-\z^{m-1})c &= 0,\\
    (1-\z^{-2})d  &= 0,\\
    (1-\z^{m-2})f &= 0,\\
    (1-\z^{-m-1})g &= 0,\\
    (1-\z^{-m})h &= 0.\\
  \end{aligned}
  \right.
\end{equation}
The invertibility of~$B$ also tells us that~$d$ and~$f$ are not
both~$0$, and that~$g$ and~$h$ are not both~$0$.  Therefore
\[
  (p=2~\text{or}~m=2) \quad\text{and}\quad (m=p-1~\text{or}~m=0).
\]
Hence either $p=2$, or else $m=2$ and $p=3$. We consider these cases
in turn.

If $p=2$, then $m\in\{0,1\}$, and~\eqref{eqn:bne0ez1} tells us that
one of the following holds:
\[
  \begin{aligned}
      (p,m)=(2,0) &\Longrightarrow c=g=0
        \Longrightarrow B = \SmallMatrix{0&b&0\\ d&0&f\\ 0&h&0\\}, \\
      (p,m)=(2,1) &\Longrightarrow f=h=0
        \Longrightarrow B = \SmallMatrix{0&b&c\\ d&0&0\\ g&0&0\\}. \\
  \end{aligned}
\]
Thus both~$m$ values with~$p=2$ lead to a matrix~$B$ that is not invertible.

If $m=2$ and $p=3$, then $c=d=h=0$, so~$B$ has the form
\[
  B = \SmallMatrix{ 0 & b & 0 \\ 0 & 0 & f \\ g & 0 & 0 \\ }.
\]  
We know that~$B^3=I$, so~$bfg=1$. Conjugating~$B$ by the
matrix~$\f:=\SmallMatrix{gf&0&0\\0&f&0\\0&0&1\\}$ yields
$B^\f=\SmallMatrix{ 0 & 1 & 0 \\ 0 & 0 & 1 \\ 1 & 0 & 0 \\ }$,
while~$A^\f=A$. This proves that~$G$ is conjugate to the
group~\eqref{eqn:Lemma10c3}.

Next we assume that $b=0$ and $c\ne0$, which leads to
\begin{equation}
  \label{eqn:b0cne0ez1}
  b=0~\text{and}~c\ne0
  \quad\Longrightarrow\quad
  \e=\z^{-m}
  \quad\Longrightarrow\quad
  \left\{
  \begin{aligned}
    (1-\z^{-1-m})d &= 0,\\
    (1-\z^{-1})f  &= 0,\\
    (1-\z^{-2m})g &= 0,\\
    (1-\z^{1-2m})h &= 0.\\
  \end{aligned}
  \right.
\end{equation}
Thus $f=0$, and then the fact that $\det B=cdh\ne0$ tells us
that~$d\ne0$ and~$h\ne0$. Then~\eqref{eqn:b0cne0ez1} gives
\[
  d\ne0 \quad\Longrightarrow\quad m=p-1
  \quad\text{and}\quad
  h\ne0 \quad\Longrightarrow\quad m=\frac{p+1}{2}.
\]
Equating the values of~$m$, we conclude that $p=3$ and $m=2$,
and~\eqref{eqn:b0cne0ez1} forces~$g=0$. This shows that
$B=\SmallMatrix{0&0&c\\d&0&0\\0&h&0\\}$, and using $B^3=1$ shows that
$cdh=1$. So if we conjugate by
$\phi:=\SmallMatrix{c&0&0\\0&cd&0\\0&0&1\\}$, we find that $A^\phi=A$
and $B^\phi=\SmallMatrix{0&0&1\\1&0&0\\0&1&0\\}$.  Hence the subgroup
of~$\PGL_3$ generated by~$A^\phi$ and~$B^\phi$ is the
group~\eqref{eqn:Lemma10c3}.

Finally, to prove that the groups in~\eqref{eqn:Lemma10c1} with $p=3$
and~\eqref{eqn:Lemma10c3} are not $\GL_3(K)$-conjugate, we observe
that~\eqref{eqn:Lemma10c1} fixes three points in~$\PP^2$,
whiile~\eqref{eqn:Lemma10c3} has no fixed points.
\end{proof}  

\begin{lemma}
\label{lemma:subgpnlzr}
Let~$K$ be an algebraically closed field of characteristic~$0$, and
let $\Gcal\subset\PGL_3(K)$ be one of the
groups~\eqref{eqn:subgpspgl3j} listed in Theorem~$\ref{theorem:main1}$.
Then the identity component of the normalizer~$\Gcal$ is the group of
diagonal matrices,
\[
  N(\Gcal)^\circ = \Dcal := \left\{ \SmallMatrix{\a&0&0\\0&\b&0\\0&0&\g\\} \in \PGL_3(K)\right\}.
\]
More precisely, we have
\begin{align*}
  N(\Gcal_3)&=N(\Gcal_{2,2})=\Scal_3\Dcal, &
  N(\Gcal_4) &= \left\< \SmallMatrix{0&0&1\\0&1&0\\1&0&0\\} \right\> \Dcal,\\
  N(\Gcal_5) &= \left\< \SmallMatrix{0&1&0\\1&0&0\\0&0&1\\} \right\> \Dcal, &
  N(\Gcal_7) &= \left\< \SmallMatrix{0&1&0\\0&0&1\\1&0&0\\} \right\> \Dcal,
\end{align*}  
where~$\Scal_3\subset\PGL_3(K)$ is the group of permutation matrices.
\end{lemma}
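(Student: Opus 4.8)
The plan is to compute the normalizer $N(\Gcal)$ in two stages: first identify the identity component $N(\Gcal)^\circ$, then determine the component group. For the identity component, I would use the general fact that the identity component of the normalizer of any subgroup equals the identity component of the centralizer, so $N(\Gcal)^\circ = Z_{\PGL_3}(\Gcal)^\circ$. Each $\Gcal$ in~\eqref{eqn:subgpspgl3j} is generated by diagonal matrices with \emph{distinct} eigenvalues (this is where one checks that $\z_n, \z_n^m$ are distinct from $1$ and from each other for each listed group, and that for $\Gcal_{2,2}$ the two generators jointly distinguish all three coordinate axes). A matrix commuting with a diagonal matrix whose eigenvalues are pairwise distinct must itself be diagonal; hence $Z_{\PGL_3}(\Gcal) \subseteq \Dcal$, and conversely $\Dcal$ centralizes every diagonal matrix, so $Z_{\PGL_3}(\Gcal) = \Dcal$. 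Since $\Dcal$ is already connected (it is a quotient of $\GG_m^3$), we get $N(\Gcal)^\circ = \Dcal$.

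For the component group, I would argue that any $\f \in N(\Gcal)$ normalizes $\Dcal = N(\Gcal)^\circ$, hence induces an automorphism of the torus $\Dcal$ and thereby permutes its character lattice; concretely $\f$ must permute the three coordinate hyperplanes of $\PP^2$ (the fixed loci of the $1$-parameter subgroups of $\Dcal$), so $\f \in \Scal_3\Dcal$. Thus in every case $N(\Gcal) \subseteq \Scal_3\Dcal$, and it remains to determine, for each $\Gcal$, \emph{which} coset representatives in $\Scal_3$ actually normalize $\Gcal$. Conjugation by a permutation matrix permutes the diagonal entries of a generator of $\Gcal$; one checks directly which permutations send the generating set of $\Gcal$ back into $\Gcal$ (modulo scalars). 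For $\Gcal_3$ and $\Gcal_{2,2}$ all six permutations work — for $\Gcal_3$ because $\s\t(m)\s^{-1}$ is a power of $\t(m)$ for each of the relevant $\s$ when $m = 2 \equiv -1$ in $(\ZZ/3)^*$, and for $\Gcal_{2,2}$ because $\Scal_3$ manifestly permutes the three sign-change matrices $\diag(1,-1,1),\diag(1,1,-1),\diag(-1,1,1)$ whose images generate $\Gcal_{2,2}$ — giving $N = \Scal_3\Dcal$. For $\Gcal_4,\Gcal_5,\Gcal_7$ one checks that a $3$-cycle or transposition sends the generator $\t(m)$ (with $m \in \{3,3,3\}$ up to the normalizations used, i.e.\ the second generator in each case of Lemma~\ref{lemma:GinPGL3}(b) that the table uses) to $\t(m')$ with $m'$ \emph{not} congruent mod scaling to a power of $m$ unless the permutation is the identity or one specific element; exactly one nontrivial coset survives in each case, and one records the representative as in the statement.

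The main obstacle is the second stage: the bookkeeping of which elements of $\Scal_3$ normalize each cyclic $\Gcal_q$. This is a finite check, but it requires being careful about the distinction between "$\s\t(m)\s^{-1}$ lies in $\<\t(m)\>$" versus "lies in $\Gcal_q$ after multiplying by a scalar in $\PGL_3$": since we work in $\PGL_3$, one is free to rescale, so the criterion is that the multiset of \emph{ratios} of diagonal entries of $\s\t(m)\s^{-1}$ coincides with that of some power $\t(m)^k$. For $q = 4$ with the generator $\diag(1,i,-1)$ the surviving permutation is the transposition of the first and third coordinates (since $(1,i,-1) \mapsto (-1,i,1) \sim (1,-i,-1) = \t(3) = \t(1)^3$), giving the displayed $N(\Gcal_4)$; for $q = 5,7$ with generators $\diag(1,\z_5,\z_5^3)$ and $\diag(1,\z_7,\z_7^3)$ a short computation singles out the indicated $3$-cycles, because $3$ has order $4$ modulo $5$ and order $6$ modulo $7$, so the relevant multiplier group is generated by $m \mapsto m^{-1}$ and one further element, exactly one of which is realized by a permutation. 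Once these checks are tabulated, assembling the statement is immediate.
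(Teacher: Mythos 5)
Your route is genuinely different from the paper's and, in outline, it works. The paper proceeds by brute force: it writes down a general matrix $A$, imposes the condition $A=\l T^{-1}AT^{j}$ entry by entry, and reads off first the centralizer and then the normalizer of each generator, using the exact sequence $C(\Gcal)\to N(\Gcal)\to\Aut(\Gcal)$ to bound the component group. You instead invoke two structural facts --- that $N(\Gcal)^\circ=Z(\Gcal)^\circ$ because a connected group acting on a finite group acts trivially, and that $N(\Gcal)$ normalizes its own identity component $\Dcal$, hence lands in $N_{\PGL_3}(\Dcal)=\Scal_3\Dcal$ --- which reduces everything to the finite check of which permutations send a generator of $\Gcal$ to a power of itself \emph{up to scalar}. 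Your individual checks (all of $\Scal_3$ for $\Gcal_3$ and $\Gcal_{2,2}$; only the $(1\,3)$-transposition for $\Gcal_4$ since $(-1,i,1)\sim(1,-i,-1)$; only the $(1\,2)$-transposition for $\Gcal_5$; only the $3$-cycles for $\Gcal_7$) are correct and match the paper's table. What your approach buys is brevity and a conceptual explanation of why only permutations can appear; what the paper's buys is that it never needs to quote the Weyl-group fact $N_{\PGL_3}(\Dcal)=\Scal_3\Dcal$, and it determines the full centralizers along the way.

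One intermediate claim in your first stage is false as stated, though the consequence you actually use survives. You assert $Z_{\PGL_3}(\Gcal)\subseteq\Dcal$ on the grounds that a matrix commuting with a diagonal matrix having distinct eigenvalues is diagonal. That is a $\GL_3$ statement; in $\PGL_3$, centralizing means $A^{-1}TA=\l T$ for some scalar $\l$, and for $T=\diag(1,\z_3,\z_3^2)$ the choice $\l=\z_3$ cyclically permutes the eigenvalues, so the scaled cyclic permutations centralize $T$ in $\PGL_3$. Indeed the paper computes $C(\Gcal_3)=\<\pi\>\Dcal\supsetneq\Dcal$. Your argument should therefore be phrased only for the identity component: $Z(\Gcal)^\circ=\Dcal$ because the centralizer is contained in $\Dcal$ together with finitely many cosets (or, more directly, because a connected subgroup of the centralizer must fix each of the finitely many eigen-points of $T$ in $\PP^2$ rather than permute them). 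With that repair the rest of your argument goes through unchanged, since for $\Gcal_3$ the cyclic permutations are recovered in your second stage anyway. A second, cosmetic point: the fixed locus of $\Dcal$ on $\PP^2$ is the three coordinate \emph{points}, not hyperplanes, and the relevant invariant of $\s T\s^{-1}$ is the ordered diagonal tuple up to a common scalar, not a multiset of ratios; your worked examples use the correct criterion, so only the wording needs tightening.
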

\begin{proof}
We recall that for any group~$G$ and subgroup~$H\subseteq G$, the
kernel of the standard homomorphism
\[
  N_G(H) \longrightarrow \Aut(H),\quad g\longmapsto (h\mapsto g^{-1}hg)
\]
is the centralizer~$C_G(H)$.  We use this to simplify our calculations.

Let $\z$ be a primitive $n$'th root of unity with $n\ge3$, let $2\le m<n$, and let
$T=T_{n,m}:=\SmallMatrix{1&0&0\\0&\z&0\\0&0&\smash{\z^m}\\}$.
Then $A=\SmallMatrix{a&b&c\\ d&e&f\\ g&h&i\\}\in C(T)$ if and only
if $A=T^{-1}AT$, so if and only if
\[
  \SmallMatrix{a&b&c\\ d&e&f\\ g&h&i\\}
  = \SmallMatrix{1&0&0\\0&\z&0\\0&0&\smash{\z^m}\\}^{-1}
  \SmallMatrix{a&b&c\\ d&e&f\\ g&h&i\\}
  \SmallMatrix{1&0&0\\0&\z&0\\0&0&\smash{\z^m}\\}
  = \SmallMatrix{
    a        & \z b      & \z^m c \\
    \z^{-1} d & e         & \z^{m-1} f \\
    \z^{-m} g & \z^{1-m} h & i\\}.
\]  
Keeping in mind that we are working in~$\PGL_3$, we first note that if
any of~$a,e,i$ is non-zero, then~$A$ is diagonal.  Suppose that
$a=e=i=0$.  If $b\ne0$, then the fact that~$\z^m,\z^{-1},\z^{1-m}$
are distinct from~$\z$
gives~$c=d=h=0$, and then the nonsingularity of~$A$ tells
us that~$fg\ne0$, and~$\z=\z^{m-1}=\z^{-m}$.  Hence~$b\ne0$ is allowed
only if~$n=3$ and~$m=2$, in which case~$C(T)$ contains the scaled
cyclic permutation $\SmallMatrix{0&b&0\\ 0&0&f\\ g&0&0\\}$. A similar
analysis for~$c\ne0$ yields the inverse scaled permutation for~$n=3$
and a contradiction for~$n\ge4$.  This completes the proof that
\[
  C(T_{n,m}) = \begin{cases} 
    \<\pi\>\Dcal&\text{if $n=3$ and $m=2$,}\\
    \Dcal&\text{if $n\ge4$ and $2\le m\le n-1$,}\\
  \end{cases}
\]
where~$\pi\in\PGL_3$ is a cyclic permutation.

Suppose now that $(n,m)=(3,2)$. Then the transposition
$\a=\SmallMatrix{1&0&0\\0&0&1\\0&1&0\\}$ satisfies
$\a^{-1}T_{3,2}\a=T_{3,2}^2$, so~$\a\in N(\Gcal_3)\setminus
C(\Gcal_3)$. Using the inclusion
\[
  N(\Gcal_3)/C(\Gcal_3)\hookrightarrow\Aut(\Gcal_3)\cong(\ZZ/3\ZZ)^*\cong\ZZ/2\ZZ,
\]
we conclude that
$N(\Gcal_3)=\<\a\>C(\Gcal_3)=\<\a\>\<\pi\>\Dcal=\Scal_3\Dcal$.

Next let $(n,m)=(4,2)$. Then the transposition
$\b=\SmallMatrix{0&0&1\\0&1&0\\1&0&0\\}$ satisfies
$\b^{-1}T_{4,2}\b=T_{4,2}^3$, so~$\b\in N(\Gcal_4)\setminus
C(\Gcal_4)$.
Using the inclusion
\[
  N(\Gcal_4)/C(\Gcal_4)\hookrightarrow\Aut(\Gcal_4) \cong (\ZZ/4\ZZ)^* \cong \ZZ/2\ZZ,
\]
we conclude that $N(\Gcal_4)=\<\b\> C(\Gcal_4)=\<\b\>\Dcal$.

Next we consider~$T_{m,n}$ with $m=3$ and $n\ge5$ prime. An element
$A\in N(T)\setminus C(T)$ needs to satisfy
\[
  \SmallMatrix{a&b&c\\ d&e&f\\ g&h&i\\}
  = \SmallMatrix{1&0&0\\0&\z&0\\0&0&\smash{\z^3}\\}^{-1}
  \SmallMatrix{a&b&c\\ d&e&f\\ g&h&i\\}
  \SmallMatrix{1&0&0\\0&\z&0\\0&0&\smash{\z^3}\\}^j
  = \SmallMatrix{
    a        & \z^j b      & \z^{3j} c \\
    \z^{-1} d & \z^{j-1} e   & \z^{3j-1} f \\
    \z^{-3} g & \z^{j-3} h & \z^{3j-3} i\\}
\]  
for some $2\le j<n$ with $\gcd(j,n)=1$. We have
\begin{align*}
  a\ne 0 &\Longrightarrow b=d=e=g=0 \Longrightarrow fh\ne0 \\
  &\Longrightarrow j\equiv 3\pmodintext{n}~\text{and}~3j\equiv1\pmodintext{n} 
   \Longrightarrow n\mid 8. \quad {\rightarrow}{\leftarrow} \\  
  e\ne 0 &\Longrightarrow a=b=d=h=0 \Longrightarrow cg\ne0 \\
  &\Longrightarrow 3j\equiv j-1\pmodintext{n}~\text{and}~ -3\equiv j-1\pmodintext{n} \\
  &\Longrightarrow n\mid 3. \quad {\rightarrow}{\leftarrow} \\  
  i\ne 0 &\Longrightarrow c=e=f=g=h \Longrightarrow bd\ne0 \\
  &\Longrightarrow j\equiv 3j-3\pmodintext{n}~\text{and}~ -1\equiv 3j-3\pmodintext{n} \\
  &\Longrightarrow n\mid 5.
\end{align*}
So we find that the normalizer of~$\Gcal_5$ contains
$\SmallMatrix{0&1&0\\ 1&0&0\\ 0&0&1\\}$.

We next look for maps with $a=e=i=0$, so
\[
  \SmallMatrix{0&b&c\\ d&0&f\\ g&h&0\\}
  = \SmallMatrix{1&0&0\\0&\z&0\\0&0&\smash{\z^3}\\}^{-1}
  \SmallMatrix{0&b&c\\ d&0&f\\ g&h&0\\}
  \SmallMatrix{1&0&0\\0&\z&0\\0&0&\smash{\z^3}\\}^j
  = \SmallMatrix{
    0        & \z^j b      & \z^{3j} c \\
    \z^{-1} d & 0   & \z^{3j-1} f \\
    \z^{-3} g & \z^{j-3} h & 0 \\}
\]  
This gives
\begin{align*}
  b\ne 0 &\Longrightarrow c=h=0 \Longrightarrow g\ne0 \\
  &\Longrightarrow j\equiv -3\pmodintext{n} \Longrightarrow d=0 \Longrightarrow f\ne0 \\
  &\Longrightarrow j\equiv-3\equiv 3j-1\pmodintext{n} \Longrightarrow n\mid 7. \\
  c\ne 0 &\Longrightarrow b=f=0 \Longrightarrow d\ne0 \\
  &\Longrightarrow -1\equiv 3j\pmodintext{n} \Longrightarrow g=0 \Longrightarrow h\ne0 \\
  &\Longrightarrow -1\equiv 3j \equiv j-3 \pmodintext{n} \Longrightarrow n\mid 7.   
\end{align*}
So we find that the normalizer of~$\Gcal_7$ contains
$\SmallMatrix{0&1&0\\ 0&0&1\\ 1&0&0\\}$ and
$\SmallMatrix{0&0&1\\ 1&0&0\\ 0&1&0\\}$.

This completes the computation of~$N(\Gcal_n)$ with~$n=3,4,5,7$.

Finally, consider an element $A\in C(\Gcal_{2,2})$ of the centralizer
of~$\Gcal_{2,2}$. It satisfies the two equations
\begin{align*}
  \SmallMatrix{a&b&c\\ d&e&f\\ g&h&i\\}
  &= \SmallMatrix{1&0&0\\0&-1&0\\0&0&1\\}^{-1}
  \SmallMatrix{a&b&c\\ d&e&f\\ g&h&i\\}
  \SmallMatrix{1&0&0\\0&-1&0\\0&0&1\\}
  =
  \SmallMatrix{a&-b&c\\ -d&e&-f\\ g&-h&i\\} \\
  \SmallMatrix{a&b&c\\ d&e&f\\ g&h&i\\}
  &= \SmallMatrix{1&0&0\\0&1&0\\0&0&-1\\}^{-1}
  \SmallMatrix{a&b&c\\ d&e&f\\ g&h&i\\}
  \SmallMatrix{1&0&0\\0&1&0\\0&0&-1\\}
  =
  \SmallMatrix{a&b&-c\\ d&e&-f\\ -g&-h&i\\}.
\end{align*}
If any of~$a,e,i$ is non-zero, then these two equations combine to
tell us that~$A$ is diagonal. On the other hand, if~$a=e=i=0$, then
$b\ne0$ forces $g=h=0$, contradicting the non-singularlity of~$A$, and
similarly~$c\ne0$ forces $b=h=0$, giving the same
contradiction. Hence~$C(\Gcal_{2,2})=\Dcal$.
Next we observe that every permutation in~$\Scal_3$ is in~$N(\Gcal_{2,2})$,
so
\[
  \Scal_3 \longrightarrow N(\Gcal_{2,2})/C(\Gcal_{2,2})
  \longhookrightarrow \Aut(\Gcal_{2,2}) \cong \GL_2(\FF_2) \cong \Scal_3.
\]
A quick calculation shows that the map $\Scal_3\to\Aut(\Gcal_{2,2})$ is an isomorphism,
and hence $N(\Gcal_{2,2})=\Scal_3 C(\Gcal_{2,2})$.
\end{proof}

\section{Diagonal Stability and Maps of Finite Order}
\label{section:diagstability}
In this section we set notation that is used throughout the rest of
this paper, we remind the reader of the Hilbert--Mumford criterion for
GIT stability, and we create two tables that we will use to determine
the stability of elements of~$\Rat_2^2$.

For a fixed root of unity~$\z$ and integer~$m$, we define
\[
  \t_m = \t_{\z,m} \in\GL_3(K),\quad \t_m(X,Y,Z)=(X,\z Y,\z^m Z).
\]
Further, for each pair of integers~$(k,\ell)$ we define a 1-parameter
subgroup of~$\SL_3$ by
\[
  L_{k,\ell}:\GG_m\to\SL_3,\quad L_{k,\ell}(t) =
  \SmallMatrix{t^k&0&0\\ 0&t^\ell&0\\ 0&0&t^{-k-\ell}\\}.
\]
We now compute the effect of applying~$\t_m$ and~$L_{k,\ell}$ to each of the
quadratic monomials in a degree~$2$ map of~$\AA^3$. 

Table~\ref{table:effectoftm} gives the effect of applying the map
$\t_m=(X,\z Y,\z^m Z)$ to each quadratic monomial, where an integer
entry~$\e$ in Table~\ref{table:effectoftm} means that the monomial is
multiplied by~$\z^\e$.  Similarly, Table~\ref{table:effectofL} gives
the effect of applying~$L_{k,\ell}(t)$ to each quadratic monomial,
where an integer entry~$\d$ in Table~\ref{table:effectofL} means that
the monomial is multiplied by~$t^\d$. 

\begin{table}[ht]
{\small
\[
\begin{array}{|c||c|c|c|c|c|c|} \hline
  & X^2 & Y^2 & Z^2 & XY & XZ & YZ \\ \hline\hline
  {\text{$X$-coord}}
  & 0 & 2 & 2m & 1 & m & m+1 \\ \hline
  {\text{$Y$-coord}}
  & -1 & 1 & 2m-1 & 0 & m-1 & m \\ \hline
  {\text{$Z$-coord}}
  & -m & 2-m & m & 1-m & 0 & 1 \\ \hline
\end{array}
\]
}
\caption{Effect of $\t_m=(X,\z Y,\z^m Z)$ on monomials}
\label{table:effectoftm}
\end{table}

\begin{table}[ht]
{\small
\[
\begin{array}{|c||c|c|c|c|c|c|} \hline
  & X^2 & Y^2 & Z^2 & XY & XZ & YZ \\ \hline\hline
  {\text{$X$-coord}}
  & k & -k+2\ell & -3k-2\ell & \ell & -k-\ell & -2k \\ \hline
  {\text{$Y$-coord}}
  & 2k-\ell & \ell & -2k-3\ell & k & -2\ell & -k-\ell \\ \hline
  {\text{$Z$-coord}}
  & 3k+\ell & k+3\ell & -k-\ell & 2k+2\ell & k & \ell \\ \hline
\end{array}
\]
}
\caption{Effect of $L_{k,\ell}(t)=(t^k X,t^\ell Y,t^{-k-\ell} Z)$ on monomials}
\label{table:effectofL}
\end{table}

We are going to use the numerical criterion of
Hilbert--Mumford~\cite[Chapter~2,
  Theorem~2.1]{mumford:geometricinvarianttheory} to determine the
stability of maps.  We recall the general setup.
(See~\cite[Section~2.2]{MR2884382} or~\cite{MR2741188} for similar
calculations.)  Let $\Gcal\subseteq\SL_{n+1}$ be an algebraic subgroup
of~$\SL_{n+1}$ and let $\bfa\in\PP^n$.  For any given one-parameter
subgroup $L:\GG_m\to\Gcal$, choose coordinates on~$\PP^{n+1}$ so that
the image of~$L$ is contained in the group of diagonal matrices.
Write~$\bfa=[a_1,\ldots,a_n]\in\PP^n$ in these coordinates,
let~$\hat\bfa=(\hat a_1,\ldots,\hat a_n)$ be a lift of~$\bfa$
to~$\AA^n$, and write the action of~$L$ on the lift~$\hat\bfa$ as
\[
  L(t)\cdot\hat\bfa=(t^{r_1}\hat a_1, t^{r_2}\hat a_2,\ldots,
  t^{r_n}\hat a_n),
\]
where~$r_1,\ldots,r_n\in\ZZ$.  The numerical factor associated
to~$L$ at~$\bfa$ is the quantity
\[
  \mu^{\Ocal(1)}(\bfa,L) 
     = \max\{ -r_i : \text{$i$ satisfies $\hat a_i\ne0$}\}. 
\]
Then the Hilbert--Mumford numerical criterion says that
\begin{align*}
  \text{$\bfa$ is $\Gcal$-unstable}
  &\quad\Longleftrightarrow\quad
  \text{$\mu^{\Ocal(1)}(L,\bfa)<0$ for some $L$,} \\
  \text{$\bfa$ is $\Gcal$-not stable}
  &\quad\Longleftrightarrow\quad
  \text{$\mu^{\Ocal(1)}(L,\bfa)\le0$ for some $L$.} 
\end{align*}
Equivalently,~$\bfa$ is $\Gcal$-stable if $\mu^{\Ocal(1)}(L,\bfa)>0$
for all~$L$, and it is~$\Gcal$-semistable if
$\mu^{\Ocal(1)}(L,\bfa)\ge0$ for all~$L$.

We write~$f_{m,\e}$ to denote a generic element of~$\Rat_2^2$
whose affine lift~${\hat f}_{m,\e}:\AA^3\to\AA^3$ satisfies
\[
  {\hat f}_{m,\e}^{\t_m} = \z^\e {\hat f}_{m,\e}.
\]
Since the one-parameter subgroup~$L_{k,\ell}$ is already diagonalized,
the following two-step procedure computes~$\mu^{\Ocal(1)}(f_{m,\e},L_{k,\ell})$.
\begin{parts}
\Part{\textbullet}
Look at Table~\ref{table:effectoftm} and check off all of the boxes
whose entry is congruent to~$\e\bmod p$.
\Part{\textbullet}
Then $\mu^{\Ocal(1)}(f_{m,\e},L_{k,\ell})$ is equal to the maximum of
the \emph{negatives} of the corresponding entries in
Table~\ref{table:effectofL}.
\end{parts}

We note that every diagonalized one-parameter subgroup of~$\SL_3$
is conjugate to~$L_{k,\ell}$ for some~$(k,\ell)\ne(0,0)$. We set the
notation
\[
  \Dcal = \left\{ \SmallMatrix{\a&0&0\\0&\b&0\\0&0&\g\\} \in \SL_3(K)\right\}
\]
for the group of diagonal matrices. By abuse of notation, we may
sometimes also write~$\Dcal$ for the diagonal subgroup
of~$\PGL_3$. Similarly, we set the notation
\[
   \Scal_3 := \left\<\SmallMatrix{1&0&0\\ 0&1&0\\ 0&0&1\\},
   \SmallMatrix{0&0&1\\ 1&0&0\\ 0&1&0\\},
   \SmallMatrix{0&1&0\\ 0&0&1\\ 1&0&0\\},
   \SmallMatrix{0&1&0\\ 1&0&0\\ 0&0&1\\},
   \SmallMatrix{0&0&1\\ 0&1&0\\ 1&0&0\\},
   \SmallMatrix{1&0&0\\ 0&0&1\\ 0&1&0\\}\right\>
\]
for the group of permutation matrices in~$\PGL_3$ or~$\SL_3$.

\paragraph{\textbf{Numerical Criterion for $\boldsymbol\Dcal$-Stability}}
Let $\Gcal\subset\PGL_3$ be a finite subgroup such
that~$N(\Gcal)^\circ=\Dcal$, and let $f\in\Rat_2^2(\Gcal)$.
\begin{align*}
  \text{$f$ is $N(\Gcal)$-unstable}
  &\;\Longleftrightarrow\;
  \text{$\mu^{\Ocal(1)}(f,L_{k,\ell})<0$ for some $(k,\ell)$.} \\
  \text{$f$ is $N(\Gcal)$-semistable}
  &\;\Longleftrightarrow\;
  \text{$\mu^{\Ocal(1)}(f,L_{k,\ell})\ge 0$ for all $(k,\ell)$.} \\
  \text{$f$ is $N(\Gcal)$-stable}
  &\;\Longleftrightarrow\;
  \text{$\mu^{\Ocal(1)}(f,L_{k,\ell}) > 0$ for all $(k,\ell)\ne(0,0)$.} 
\end{align*}

\section{Maps with an Automorphism of Prime Order $p\ge5$}
\label{section:autpge5}
In this section we analyze maps having an automorphism of prime order~$p\ge5$.

\begin{proposition}
\label{proposition:disteps}
Let~$K$ be an algebraically closed field of characteristic~$0$, let
$p\ge5$ be prime, let $\z$ be a primitive $p$'th root of unity,
let~$m\in\ZZ/p\ZZ$, and let $f\in\Rat_2^2$ be a dominant rational map
such that $\t_m\in\Aut(f)$.  Choose $0\le \e<p$ so that the lifts
$\hat f:\AA^3\to\AA^3$ of~$f$ satisfy $\hat f^{\t_m}=\z^\e \hat f$.
Then one of the following is true\textup:
\begin{parts}
\Part{(a)}
$f$ is $\Dcal$-unstable.
\Part{(b)}
$f$ is $\Dcal$-semistable, but not $\Dcal$-stable, and is
$\PGL_3(K)$-conjugate to a rational map of the form
$[aX^2+YZ,bXY,cXZ]$ with~$bc\ne0$. Further
$(m,\e)=(-1,0)$.
\Part{(c)}
$p=5$, and $f$ is $\Dcal$-stable and $\PGL_3(K)$-conjugate
to the rational map $[YZ,X^2,Y^2]$ with $(m,\e)=(3,4)$.
\Part{(d)}
$p=7$, and $f$ is $\PGL_3$-stable and $\PGL_3(K)$-conjugate to the
morphism $[Z^2,X^2,Y^2]$ with $(m,\e)=(3,6)$.
\end{parts}
\end{proposition}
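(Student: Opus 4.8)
The plan is to read the monomial structure of~$f$ off the automorphism relation and then apply the Hilbert--Mumford criterion of Section~\ref{section:diagstability}. For an affine lift $\hat f=[\hat f_0,\hat f_1,\hat f_2]$, the identity $\hat f^{\t_m}=\z^\e\hat f$ is equivalent to the statement that every monomial occurring in $\hat f_0$, $\hat f_1$, $\hat f_2$ has $\t_m$-weight $\equiv\e$, $\e+1$, $\e+m\pmod p$ respectively; in the notation of Table~\ref{table:effectoftm}, where the six quadratic monomials have weights $0,2,2m,1,m,m+1$, this confines the support of each coordinate to one of the three weight classes $S_\e$, $S_{\e+1}$, $S_{\e+m}$, where $S_r$ is the set of monomials of weight $\equiv r$. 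So the classification becomes a finite case analysis indexed by $(m,\e)\in(\ZZ/p\ZZ)^2$. By Lemma~\ref{lemma:GinPGL3}(a) together with the permutation/scaling conjugations $\langle\t_m\rangle\sim\langle\t_{1-m}\rangle\sim\langle\t_{m^{-1}}\rangle$ from the proof of Lemma~\ref{lemma:GinPGL3}(b) (and the fact that $\Aut$ is closed under taking powers), after a $\PGL_3(K)$-conjugation I may assume that $m$ lies in a set of orbit representatives for the anharmonic $S_3$-action on $\ZZ/p\ZZ$: every $m$ is conjugate to $0$ (the orbit $\{0,1\}$), to $-1$ (the orbit $\{-1,2,(p+1)/2\}$), or to a value for which the six weights $0,1,2,m,m+1,2m$ are pairwise distinct.

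I would then run three regimes, in each of which the support of~$f$ is pinned down and stability is decided mechanically from Table~\ref{table:effectofL} and the numerical criterion for $\Dcal$-stability. \emph{Distinct weights} (possible only for $p\ge7$): each $S_r$ has at most one monomial, so $f=[M_0,M_1,M_2]$ is a monomial map; the dominant monomial maps are easy to list, and demanding a diagonal automorphism of prime order $p\ge5$ forces~$p$ to divide a small integer --- e.g.\ for $[Z^2,X^2,Y^2]$ the conditions read $2m+1\equiv0$ and $3m\equiv2\pmod p$, hence $p=7$. The upshot is that, up to permuting coordinates, the only possibilities are the ``identity-type'' maps $[X^2,XY,XZ]$, $[XY,Y^2,YZ]$, $[XZ,YZ,Z^2]$, which are $\Dcal$-unstable by a one-line use of Table~\ref{table:effectofL} (take $k>0$), giving case~(a); and $[Z^2,X^2,Y^2]$ with $(m,\e)=(3,6)$, occurring only for $p=7$, which is $\Dcal$-stable by Table~\ref{table:effectofL} and is in fact $\PGL_3(K)$-stable (it is a morphism with finite automorphism group; alternatively run the numerical criterion over all one-parameter subgroups) --- case~(d).

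\emph{$m$ conjugate to~$0$:} normalizing to $m=0$ gives $S_{\e+m}=S_\e$, so $\hat f_0$ and $\hat f_2$ are built from the same weight class; running over the admissible $\e$ (dominance forcing $|S_\e|\ge2$) and invoking Table~\ref{table:effectofL} shows every dominant such $f$ is $\Dcal$-unstable --- case~(a). \emph{$m$ conjugate to~$-1$:} normalizing to $m=-1$, the nonempty weight classes are $S_0=\{X^2,YZ\}$, $S_1=\{XY\}$, $S_2=\{Y^2\}$, $S_{-1}=\{XZ\}$, $S_{-2}=\{Z^2\}$. For $\e=0$ one gets, up to rescaling the $YZ$-coefficient, exactly the maps $f=[aX^2+YZ,bXY,cXZ]$ (the degenerate case in which the $YZ$-term is absent being $\Dcal$-unstable); here dominance forces $bc\ne0$, and Table~\ref{table:effectofL} shows $f$ is $\Dcal$-semistable but not $\Dcal$-stable --- case~(b), with the automorphism now $\t_{-1}$ and $\e=0$. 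Every other $\e$ gives case~(a) when $p\ge7$, while for $p=5$ two further values of $\e$ produce the $\Dcal$-stable monomial maps $[Y^2,Z^2,XY]$ and $[Z^2,XZ,Y^2]$, both $\PGL_3(K)$-conjugate to $[YZ,X^2,Y^2]$ --- case~(c), with $(m,\e)=(3,4)$ for that normal form. In each stable or semistable-not-stable case I would finish by writing down an explicit conjugating matrix and recording the pair $(m,\e)$ attached to the listed representative.

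I expect the main obstacle to be organizational rather than conceptual: keeping the (moderately large) case analysis over $(m,\e)$ complete and consistent --- making sure the conjugacy reductions genuinely exhaust all~$m$, that ``dominant'' is imposed carefully enough to separate off the degenerate identity-type and linear maps, and that the handful of weight coincidences special to $p=5$ and $p=7$ (precisely where the extra stable maps in cases~(c) and~(d) arise) are not overlooked. A smaller technical point is the passage from $\Dcal$-stability to full $\PGL_3(K)$-stability required in part~(d).
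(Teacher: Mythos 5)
Your proposal is correct, and while it runs on the same machinery as the paper's proof (the weight data of Table~\ref{table:effectoftm}, the Hilbert--Mumford computation via Table~\ref{table:effectofL}, and diagonal twists to normalize coefficients), it organizes the finite case analysis by a genuinely different decomposition. The paper keeps $m$ arbitrary and splits on whether $\e$ lies in $\{0,1,m\}$, the three weights occurring three times: for $\e\notin\{0,1,m\}$ it enumerates the $44$ ways that three of the nine singleton weights can collide modulo~$p$ (producing Table~\ref{table:mcollisionall}, whence cases~(c) and~(d)), and for $\e\in\{0,1,m\}$ it lists the five residues of $m$ permitting four or more monomials (whence case~(b)). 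You instead normalize $m$ first, using the $\Scal_3$-conjugations from the proof of Lemma~\ref{lemma:GinPGL3}(b) to place $m$ in one of the anharmonic orbits $\{0,1\}$, $\{-1,2,2^{-1}\}$, or the distinct-weight locus; the $44$-case collision count is then replaced by the observation that in the distinct-weight regime $f$ must be a monomial map whose three weights differ by $1$ and by $m$, which forces $p=7$ for the non-linear possibilities, while the $p=5$ maps of case~(c) appear as the extra admissible values $\e\in\{2,3\}$ in the $m=-1$ regime once the five weight classes wrap around modulo~$5$. I checked that your orbit representatives do exhaust $\ZZ/p\ZZ$ and that the resulting lists of semistable families agree with the paper's, so the trade-off is exactly the one you identify: your version has fewer and more structured cases, at the price of tracking how $\e$ and the choice of primitive root transform under the $m$-normalizing conjugations (harmless, since those conjugations lie in $N(\Dcal)=\Scal_3\Dcal$ and so preserve $\Dcal$-stability), whereas the paper's version avoids that bookkeeping and displays all six $p=5$ families and both $p=7$ families side by side. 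Two minor points: your destabilizing choice ``$k>0$'' works for $[aX^2,bXY,cXZ]$ but the other two identity-type maps need $\ell>0$ or $k+\ell<0$ instead; and for part~(d) the cleaner of your two suggested finishes is the one the paper uses, namely that $[Z^2,X^2,Y^2]$ is a morphism and morphisms are automatically stable.
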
  
\begin{proof}
We write~$f=f_{m,\e}$ to help keep track of the dependence on~$m$
and~$\e$.  The assumption that~$f$ is dominant implies that its
coordinate functions are non-zero, so it necessarily includes at least
three monomials.  Looking at Table~\ref{table:effectoftm}, we see that
each of the quantities~$0$,~$1$, and~$m$ appears three times, while
each of the quantities
\begin{equation}
  \label{eqn:mlist}
  -m,\,1-m,\,2-m,\,-1,\,2,\,m-1,\,m+1,\,2m-1,\,2m
\end{equation}
appears exactly once.

We suppose first that $\e\notin\{0,1,m\}$. Then the only way
for~$f_{m,\e}$ to have at least three monomials is for at least three
of the quantities in the list~\eqref{eqn:mlist} to be equal. Our assumption
that $p\ge5$ means that the elements in each of the subsets
\[
  \{-m,1-m,2-m\},\quad
  \{-1,2\},\quad \{m-1,m+1\}, \quad \{2m-1,2m\},
\]
remain distinct when reduced modulo~$p$, so in order to obtain three
equal values modulo~$p$, we first choose three of these four sets,
then choose an element from each set, then equate the three quantities
and solve for~$m$ modulo~$p$.  This gives a total of~$44$
possibilities, although many of them give no value for~$m$, since each
choice yields two equations for the one quantity~$m$.  Further, some
choices give $\e\in\{0,1,m\}$, which we are not presently considering.
We do not know a clever way to do this computation, but working
through the complete set of possibilities, we find that exactly~$10$
choices yield values of~$m$, and all but two of these require
either~$p=5$ or~$p=7$. The data and resulting maps are listed in
Table~\ref{table:mcollisionall}.

\begin{table}[ht]
{\tiny\[
\begin{array}{|c|c|c||c|c|c||c|} \hline
\multicolumn{3}{|c||}{\textup{Values from Table
    \ref{table:effectoftm}}} & m & \e & p & f_{m,\e}\\ \hline\hline
  -1 & m - 1 & 2 m - 1 & 0 & -1 & \text{all $p$}  & [0,aX^2+bZ^2+cXZ,0]\\ \hline
 2 & m + 1 & 2 m & 1 & 2 & \text{all $p$}  & [aY^2+bZ^2+cYZ,0,0]\\ \hline\hline
-1 & m + 1 & -m + 2 & 3 & -1 & 5  & [a YZ, b X^2, c Y^2 ]\\ \hline
-1 & -m + 1 & 2 m & 2 & -1 & 5    & [a Z^2, b X^2, c XY ] \\ \hline
 2 & m - 1 & -m & 3 & 2 & 5       & [a Y^2, b XZ, c X^2 ]\\ \hline
2 & -m + 1 & 2 m - 1 & 4 & 2 & 5  & [a Y^2, b Z^2, c XY ]\\ \hline
m + 1 & -m & 2 m - 1 & 2 & 3 & 5  & [a YZ, b Z^2, c X^2 ]\\ \hline
m - 1 & -m + 2 & 2 m & 4 & 3 & 5  & [a Z^2, b XZ, c Y^2 ]\\ \hline\hline
-1 & -m + 2 & 2 m & 3 & -1 & 7    & [a Z^2, b X^2, c Y^2]\\ \hline
2 & -m & 2 m - 1 & 5 & 2 & 7      & [a Y^2, b Z^2, c X^2] \\ \hline
\end{array}
\]
}
\caption{Values of $m\bmod p$ and $\e\notin\{0,1,m\}$ such that $f_{m,\e}$ has at least $3$ monomials}
\label{table:mcollisionall}
\end{table} 

In Table~\ref{table:mcollisionall}, we require~$abc\ne0$, since we
need at least three monomials.  We start by noting that the maps
$f_{0,-1}$ and $f_{1,2}$, which work for all~$p$, are clearly
non-dominant (indeed, they are constant maps), so they may be
discarded. (It is also easy to check that they are $\Dcal$-unstable.)

We next note that the six families of maps for~$p=5$
are~$\Scal_3$-con\-ju\-gates, i.e., they may be obtained from one another
by permuting the variables.  It thus suffices to
consider~$f_{3,-1}=[aYZ,bX^2,cY^2]$, which is a dominant rational map
having a single point~$[0,0,1]$ of indeterminacy.  Using
Table~\ref{table:effectofL}, we find that
\[
  \mu^{\Ocal(1)}(f_{3,-1},L_{k,\ell}) =\max\{2k,-2k+\ell,-k-3\ell\}.
\]
The identity
\[
  7\cdot(2k) + 6(-2k+\ell) + 2(-k-3\ell) = 0 
\]
shows that at least one of the quantities in parentheses is non-negative,
and indeed unless $k=\ell=0$, one of them is positive. Hence
\[
  \inf_{(k,\ell)\ne(0,0)} \mu^{\Ocal(1)}(f_{3,-1},L_{k,\ell})
  = \inf_{(k,\ell)\ne(0,0)} \max\{2k,-2k+\ell,-k-3\ell\} > 0,
\]
which proves that~$f_{3,-1}$ is $\Dcal$-stable.

In order to obtain the map in~(c), we observe that the~$a,b,c$
coefficients of~$f_{3,-1}$ are twist parameters. To see this,
let~$\s(X,Y,Z)=[uX,vY,wZ]$. Then
\[
  f_{3,-1}^\s = [v^2w^2 aYZ, u^3w bX^2, uv^3 cY^2],
\]
so setting $u^{20}=a^3b^{-6}c^{-2}$, $v^{20}=a^{-1}b^2c^{-6}$, and
$w^{20}=a^{-9}b^{-2}c^6$ (with an appropriate choice of $20$'th
roots) yields $f_{3,-1}^\s=[YZ,Z^2,X^2]$. Thus the family~$f_{3,-1}$ for $p=5$
is a single~$\Dcal$-orbit.

Similarly, the two families of maps for~$p=7$ are conjugate via a
cyclic permutation of the variables. It is also clear that they are
morphisms, so in particular they are stable~\cite{MR2741188}.
Further, just as in~(c), the coefficients are twist parameters. Thus
for~$p=7$ and~$\s(X,Y,Z)=[uX,vY,wZ]$ we have
\[
  f_{3,-1}^\s =  [vw^3 a Z^2, u^3w b X^2, uv^3 c Y^2],
\]
so setting $u^{28}=a^3b^{-9}c^{-1}$, $v^{28}=a^{-1}b^3c^{-9}$, and
$w^{28}=a^{-9}b^{-1}c^3$ (with an appropriate choice of $28$'th roots)
yields $f_{3,-1}^\s=[Z^2,X^2,Y^2]$.  Thus the family~$f_{3,-1}$ for
$p=7$ is also a single~$\Dcal$-orbit.

This completes the classification of dominant semistable
maps~$f_{m,\e}$ with $\e\notin\{0,1,m\}$. We next observe that if
$\e\in\{0,1,m\}$ and~$f_{m,\e}$ has exactly three monomials, then
Tables~\ref{table:effectoftm} and~\ref{table:effectofL} give the
following three maps and their numerical invariants:
\[\begin{array}{|c|c|c|} \hline
\e & f_{m,\e} & \mu^{\Ocal(1)}(f_{m,0},L_{k,\ell}) \\ \hline\hline
0 & [aX^2,bXY,cXZ] & -k \\ \hline
1 & [aXY,bY^2,cYZ] & -\ell \\ \hline
m & [aXZ,bYZ,cZ^2] & k+\ell \\ \hline
\end{array}
\]
Thus in all cases~$f_{m,\e}$ induces the linear map~$[aX,bY,cZ]$,
and we can find a~$(k,\ell)\ne(0,0)$ making $\mu^{\Ocal(1)}(f_{m,0},L_{k,\ell})<0$,
so all of these maps are~$\Dcal$-unstable.

We now assume that $\e\in\{0,1,m\}$ and that~$f_{m,\e}$ has at least
four monomials.

\paragraph{\framebox{$\boldsymbol{\e=0}$}}
Since $f_{m,0}$ has four or more monomials,
Table~\ref{table:effectoftm} tells us that~$p$ must divide one of the
quantities in the set
\[
   \{m-2,m-1,m,m+1,2m-1\}.
\]
Since~$\t_m$ depends only on~$m$ modulo~$p$, this gives five
possibilities:
\[\begin{array}{|c|c|} \hline
m\bmod p & f_{m,0}(X,Y,Z) \\ \hline\hline
2 & [aX^2,bXY,cY^2+dXZ] \\ \hline
1 & [aX^2,bXY+cXZ,dXY+eXZ] \\ \hline
0 & [aX^2+bZ^2+cXZ,dXY+eYZ,fX^2+gZ^2+hXZ]\\ \hline
-1 & [aX^2+bYZ,cXY,dXZ] \\ \hline
2^{-1} & [aX^2,bZ^2+cXY,dXZ] \\ \hline
\end{array}
\]
For each of these families we use Table~\ref{table:effectofL} to
compute
\begin{align*}
  \mu^{\Ocal(1)}(f_{2,0},L_{k,\ell})
  &\le \max\{-k,-k-3\ell\} \xrightarrow{(k,\ell)=(1,0)} -1,
  \\
  \mu^{\Ocal(1)}(f_{1,0},L_{k,\ell})
  &\le \max\{-k,2\ell,-2k-2\ell\} \xrightarrow{(k,\ell)=(2,-1)} -2,
  \\
  \mu^{\Ocal(1)}(f_{0,0},L_{k,\ell})
  &\le \max\{-k,3k+2\ell,k+\ell,-3k-\ell\} \xrightarrow{(k,\ell)=(1,-2)} -1,
  \\
  \mu^{\Ocal(1)}(f_{-1,0},L_{k,\ell})
  &= \max\{-k,2k\} \ge 0~\text{for all $(k,\ell)\ne(0,0)$,} 
  \\
  \mu^{\Ocal(1)}(f_{1/2,0},L_{k,\ell})
  &\le \max\{-k,2k+3\ell\} \xrightarrow{(k,\ell)=(1,-1)} -1.
\end{align*}
Thus $f_{-1,0}$ is $\Dcal$-semi-stable provided $b\ne0$ and $a,c,d$
are not all~$0$, while the maps in the other four families are
$\Dcal$-unstable. Making a change of variables
$[X,Y,Z]\to[X,b^{-1}Y,Z]$, we can make the coefficient of~$YZ$
in~$f_{-1,0}$ equal to~$1$. It is also clear that if~$c$ or~$d$
is~$0$, then~$f_{-1,0}$ is not dominant. This gives the family of maps
in~(b).

\paragraph{\framebox{$\boldsymbol{\e=1}$}}
The straightforward approach is to use the assumption that $f_{m,1}$
has four or more monomials and Table~\ref{table:effectoftm} to deduce
that $m \in \{2^{-1},1,0,2,-1\} \bmod p$, which leads to the  five
families of maps:
\[
  \begin{array}{|c|c|} \hline
    m\bmod p & f_{m,1}(X,Y,Z) \\ \hline\hline
    2 & [aXY,bY^2+cXZ,dYZ] \\ \hline
    1 & [aXY+bXZ,cY^2+dZ^2+eYZ,eY^2+fZ^2+gYZ] \\ \hline
    0 & [aXY+bYZ,cY^2,dXY+eYZ]\\ \hline
    -1 & [aXY,bY^2,cX^2+dYZ] \\ \hline
    2^{-1} & [aZ^2+bXY,cY^2,cYZ] \\ \hline
  \end{array}
\]
Up to~$\Scal_3$-conjugation, these are exactly the five families that
we found for~$\e=0$, so we obtain nothing new.

An alternative is to let $\s(X,Y,Z)=(Y,X,Z)$ and to observe that
\[
  (\hat{f}_{m,1}^\s)^{\t_{1-m}} = \hat{f}_{m,1}^\s.
\]
Thus the map~$f_{m,1}^\s$ is in the family of maps~$f_{1-m,0}$, and
since~$f_{m,1}^\s$ and~$f_{m,1}$ have the same number of non-zero
monomials, the set of maps with~$\e=1$ is equal to the set
of~$\s$-conjugates of the maps with~$\e=0$.  (It's also amusing to
note that the set of~$m$ values that we obtained for~$\e=0$ is
invariant under $m\to1-m$.)

\paragraph{\framebox{$\boldsymbol{\e=m}$}}
Again using the assumption that $f_{m,1}$ has four or more monomials,
Table~\ref{table:effectoftm} tells us that
\[
  m \in \{2^{-1},1,0,2,-1\} \bmod p.
\]
We are currently dealing with the case $\e=m$, and we have already
analyzed the cases $\e=0$ and $\e=1$, so it remains to consider
$m\in\{2^{-1},2,-1\}$.  This gives three families of maps:
\[\begin{array}{|c|c|} \hline
  m\bmod p & f_{m,1}(X,Y,Z) \\ \hline\hline
  2 & [aY^2+bXZ,cYZ,dZ^2] \\ \hline
  -1 & [aXZ,bX^2+cYZ,dZ^2] \\ \hline
  2^{-1} & [aXZ,bYZ,cZ^2+dXY] \\ \hline
\end{array}
\]
These three families are~$\Scal_3$-conjugate to three of the families
that we found for~$\e=0$. Hence up to~$\PGL_3$ equivalence, we again
obtain nothing new.
\end{proof}

The next step is to compute the full automorphism groups of the maps
appearing in Proposiiton~\ref{proposition:disteps}.

\begin{proposition}
\label{proposition:C5C7etc}
\begin{parts}
\Part{(a)}
Let $f=[YZ,X^2,Y^2]$, and let $\z$ be a primitive $5$'th root of unity. Then
\[
  \Aut(f) = \left\< \SmallMatrix{1&0&0\\0&\z&0\\0&0&\smash[t]{\z^3}\\} \right\> \cong C_5.
\]
\Part{(b)}
Let $f=[Z^2,X^2,Y^2]$, and let $\z$ be a primitive $7$'th root of unity. Then
\[
  \Aut(f) = \left\< \SmallMatrix{1&0&0\\0&\z&0\\0&0&\smash[t]{\z^3}\\},
  \SmallMatrix{0&1&0\\0&0&1\\1&0&0\\}\right\> \cong C_7\rtimes C_3.
\]
\Part{(c)}
Let $(a,b,c)\in K^3$ with $bc\ne0$, and let $f=[aX^2+YZ,bXY,cXZ]$.
Then
\[
  \Aut(f) \supset \left\{
  \SmallMatrix{1&0&0\\0&t&0\\0&0&\smash[t]{t^{-1}}\\} : t\in\GG_m
  \right\} \cong \GG_m.
\]
\end{parts}
\end{proposition}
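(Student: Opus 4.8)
The common engine for all three parts is that $f^\f=f$ is equivalent to $\f$ commuting with $f$, so every $\f\in\Aut(f)$ carries the indeterminacy scheme $I(f)$ to itself and the critical scheme $\Crit(f)$ to itself, the latter with its component multiplicities intact. For each of the three maps my plan is to compute $I(f)$ and $\Crit(f)$ explicitly, use them to cut the possible $\f$ down to a small completely explicit family, and then finish by imposing $f^\f=f$ coordinate-by-coordinate. Part (c) needs none of this: setting $\psi_t=\SmallMatrix{1&0&0\\0&t&0\\0&0&\smash[t]{t^{-1}}\\}$, one computes $f\circ\psi_t$ and applies $\psi_t^{-1}$ to check $f^{\psi_t}=f$ for every $t\in\GG_m$, and since $t\mapsto\psi_t$ is an injective homomorphism $\GG_m\hookrightarrow\PGL_3(K)$, its image is the asserted copy of $\GG_m$ inside $\Aut(f)$.

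For part (a), the inclusion $\<\SmallMatrix{1&0&0\\0&\z&0\\0&0&\smash[t]{\z^3}\\}\>\subseteq\Aut(f)$ is a one-line verification, already recorded in Proposition~\ref{proposition:disteps}(c). For the reverse inclusion I would first note that $f=[YZ,X^2,Y^2]$ has $I(f)=\{[0,0,1]\}$ and Jacobian determinant a nonzero scalar multiple of $XY^2$, so $\Crit(f)$ is the line $\{X=0\}$ together with the \emph{double} line $\{Y=0\}$. Any $\f\in\Aut(f)$ must fix each of these two lines, since their multiplicities distinguish them, and this forces the first two rows of a matrix representing $\f$ to be $(\ast,0,0)$ and $(0,\ast,0)$. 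Comparing the first coordinate of $f^\f$ with that of $f$ then kills the surviving off-diagonal entries, so $\f=\diag(r,s,w)$ is diagonal, and matching all three coordinates yields $s^5=r^5$ and $s^2w=r^3$. These relations say precisely that $\f$ is, in $\PGL_3(K)$, a power of $\SmallMatrix{1&0&0\\0&\z&0\\0&0&\smash[t]{\z^3}\\}$; hence $\Aut(f)\cong C_5$.

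Part (b) runs on the same template with more bookkeeping. First $\psi=\SmallMatrix{1&0&0\\0&\z&0\\0&0&\smash[t]{\z^3}\\}$ lies in $\Aut(f)$ by Proposition~\ref{proposition:disteps}(d), and $\sigma=\SmallMatrix{0&1&0\\0&0&1\\1&0&0\\}\in\Aut(f)$ because it cycles the coordinates of the monomial map $f=[Z^2,X^2,Y^2]$; one checks $\psi^7=\sigma^3=1$ and $\sigma\psi\sigma^{-1}=\psi^2$, which (as $2^3\equiv1\pmod 7$) presents the non-abelian group $C_7\rtimes C_3$ of order $21$, so $\<\psi,\sigma\>\cong C_7\rtimes C_3\subseteq\Aut(f)$. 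For the reverse inclusion, $f$ is a morphism, so $I(f)=\emptyset$, and its Jacobian determinant is a scalar times $XYZ$, so $\Crit(f)$ is the triangle of coordinate lines; hence any $\f\in\Aut(f)$ permutes the three coordinate points and has the form $\f=PD$ with $P\in\Scal_3$ and $D$ diagonal. After replacing $\f$ by $\sigma^k\f$ for suitable $k$ I may assume $P$ is either the identity or a transposition, and the transposition case is eliminated by noting that $f^{PD}$ then has its three squared monomials permuted into coordinate slots incompatible with $f=[Z^2,X^2,Y^2]$, whatever $D$ is. Thus the image of $\Aut(f)\to\Scal_3$ is exactly $\<\sigma\>\cong C_3$, and the diagonal computation (as in part (a), now with $7$th roots of unity) shows its kernel is exactly $\<\psi\>\cong C_7$; therefore $\Aut(f)=\<\psi,\sigma\>\cong C_7\rtimes C_3$.

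I expect the only real friction to be the elementary-but-fussy root-of-unity bookkeeping that rules out \emph{extra} symmetries: the step "$s^5=r^5$ and $s^2w=r^3$ imply $\f$ is a power of the stated generator" in (a) and its $7$th-root analogue in (b), together with the verification that no odd permutation of the coordinate axes extends to an automorphism of $[Z^2,X^2,Y^2]$. Computing $I(f)$ and $\Crit(f)$, and checking that $\psi$, $\sigma$, and $\psi_t$ really lie in $\Aut(f)$, are all mechanical.
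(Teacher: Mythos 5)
Your proposal is correct and follows essentially the same route as the paper: verify the stated generators directly, then use invariance of $I(f)$ and of $\Crit(f)$ (with component multiplicities) to force $\f$ into an explicit block/monomial form, and finish by coordinate-by-coordinate comparison yielding the same root-of-unity relations ($v^5=1$, $w=v^3$ in (a); $v^7=1$, $w=v^3$ in (b)). The only cosmetic difference is in ruling out the odd permutations in (b), where the paper tracks the orbit of $[1,0,0]$ while you compare which coordinate slots the squared monomials land in — the two arguments are interchangeable.
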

\begin{proof}
(a)\enspace
We see by inspection that $\Aut(f)\supseteq\<\t_3\>\cong C_5$.
We claim that this is the full automorphism group.  The
indeterminacy and critical loci of~$f$ are
\[
  I(f)=\bigl\{[0,0,1]\bigr\}
  \quad\text{and}\quad
  \Crit(f)=\{4XY^2=0\}.
\]
Since any~$\f\in\Aut(f)$ preserves both of these sets, with
their multiplicities, we see that~$\f$ leaves both of the lines~$X=0$
and~$Y=0$ invariant. Hence~$\f$ necessarily has the form
$\f=\SmallMatrix{\a&0&0\\ 0&\b&0\\ \g&\d&\e\\}$. Comparing the first
coordinates of~$f\circ\f$ and~$\f\circ f$ (note $\a\b\ne0$),
\begin{align*}
  f\circ\f(X,Y,Z)&=[\b Y(\g X+\d Y+\e Z),\ldots],\\
  \f\circ f(X,Y,Z)&=[\a YZ,\ldots],
\end{align*}
we see that~$\g=\d=0$, i.e., the map~$\f$ is diagonal. Without loss of
generality, we write~$\f\in\PGL_3$ as $\f(X,Y,Z)=[X,vY,wZ]$, and then
\[
  f^\f = [v^2 w^2 YZ,  w X^2, v^3 Z^2].
\]
Hence $f^\f=f$ if and only if $v^2w^2=w=v^3$.  Substituting $w=v^3$
into $v^2w^2=v^3$ gives $v^8=v^3$, so $v^5=1$.  Therefore~$v$ is a
$5$'th root of unity and $w=v^3$, so~$\f\in\<\t_3\>$.
\par\noindent  (b)\enspace 
We see by inspection that $\Aut(f)\supseteq\<\t_3\>\cong C_7$, but it
turns out that $\Aut(f)$ is strictly larger than this. Precisely, if
we let $\pi(X,Y,Z)=[Z,X,Y]$, then it is easy to check that $f^\pi=f$,
so $\pi\in\Aut(f)$.  Also, we compute $\pi^{-1}\t_3\pi=\t_3^2$,
so~$\Aut(f)$ contains the semi-direct product
$\<\t_3\>\rtimes\<\pi\>\cong C_7\rtimes C_3$.  We claim that this is
the full automorphism group of~$f$.

The critical locus of~$f$ is
\[
  \Crit(f) = \{8XYZ=0\},
\]
so $\Crit(f)$ consists of the three lines $XYZ=0$. Any
$\s\in\Aut(f)$ must permute these lines and their intersection
points. The map~$\pi$ is a cyclic permutation of the intersection
points, so replacing~$\s$ by~$\pi^{\pm1}\s$ if necessary, we may assume
that~$\s$ fixes~$[1,0,0]$ and either fixes or swaps~$[0,1,0]$
and~$[0,0,1]$. If~$\s$ fixes all three points, then~$\s$ is a diagonal
map, say $\s(X,Y,Z)=[X,vY,wZ]$, and we have
\[
  f^\s = [vw^3 Z^2, w X^2, v^3 Y^2 ].
\]
Hence $f^\s=f$ if and only if $vw^3=w=v^3$.  Substituing $w=v^3$ into
$vw^3=v^3$ gives $v^{10}=v^3$, so $v\in\bfmu_7$ and $w=v^3$.  Hence
$\s\in\<\t_3\>$.

It remains to deal with the case that~$\s$ fixes $[1,0,0]$ and
permutes $[0,1,0]$ and~$[0,0,1]$. But then we would have
$f^\s(1,0,0)=[0,0,1]$, while $f(1,0,0)=[0,1,0]$,
so~$f^\s$ cannot equal~$f$. (More precisely, the
maps~$f$ and~$f^\s$ are inverses in their action on the
three points.)  This completes the proof that~$\Aut(f)$ is
generated by~$\t_3$ and~$\pi$.
\par\noindent  (c)\enspace
It is trivial to check that the indicated copy of~$\GG_m$ is contained in~$\Aut(f)$.
A more detailed analysis, which we leave to the interested reader,
can be used to show that~$\Aut(f)\cong\GG_m$.
\end{proof}

\section{Maps with Automorphism Group Containing $C_p\times C_p$ with $p\ge3$}
\label{section:autCpCp}
Our goal in this section is essentially a non-existence result.
Somewhat surprisingly, the case~$p=3$ will be crucial to our analysis
of maps whose automorphism group contains a copy of~$C_4$. We also
note that the proposition is wildly incorrect for~$p=2$, and indeed we
devote a long section (Section~\ref{section:autC2C2}) to classifying maps
whose automorphism group contains a copy of~$C_2^2$.

\begin{proposition}
\label{proposition:fCpCp}
Let $K$ be an algebraically closed field of characteristic~$0$.
Let~$p\ge3$ be prime, and let $f\in\Rat_2^2(K)$ have the property that
$\Aut(f)$ contains a copy of~$C_p^2$.  Then
either~$f$ is a linear map or else~$f$ is not dominant.
\end{proposition}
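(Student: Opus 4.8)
By Lemma~\ref{lemma:GinPGL3}(c), after replacing~$f$ by a~$\PGL_3(K)$-conjugate we may assume that~$\Aut(f)$ contains the group
\[
  G = \left\< \s := \SmallMatrix{1&0&0\\0&\z&0\\0&0&1\\},\;
              \t := \SmallMatrix{1&0&0\\0&1&0\\0&0&\z\\} \right\> \cong C_p^2,
\]
where~$\z$ is a primitive $p$'th root of unity. The plan is to exploit the fact that a quadratic monomial map is an eigenvector for each of~$\s$ and~$\t$, and that the joint eigenvalues (the characters of~$G$) sort the six monomials~$X^2,Y^2,Z^2,XY,XZ,YZ$ into distinct classes. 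First I would write each coordinate~$\hat f_j$ of a lift~$\hat f:\AA^3\to\AA^3$ as a linear combination of the monomials appearing with a single $G$-character: the condition~$\hat f^\s = \z^{a}\hat f$, $\hat f^\t = \z^{b}\hat f$ for some~$(a,b)$ (applied coordinate by coordinate, so really three pairs of characters, one per coordinate) forces each coordinate of~$\hat f$ to be supported on the monomials lying in a single fiber of the character map. Reading off the $G$-action on monomials from the obvious analogue of Table~\ref{table:effectoftm}, one checks that for~$p\ge3$ these six fibers are \emph{singletons}: distinct monomials carry distinct characters of~$G$. Hence each coordinate function of~$\hat f$ is a scalar multiple of a single monomial.

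The second step is bookkeeping: a self-map of~$\PP^2$ whose three coordinate functions are each a single monomial is a ``monomial map'' in the classical sense, i.e.\ up to the diagonal torus it is given by a $3\times3$ integer matrix of exponents (with row sums~$2$), and one argues directly that such a map is either not dominant (when the exponent matrix is singular over~$\QQ$, e.g.\ two coordinates sharing a variable factor, or a repeated coordinate) or, if dominant, must in fact have all three coordinates using \emph{distinct} variables to the second power or be one of the ``cyclic'' monomial maps $[Y^2,Z^2,X^2]$, $[Z^2,X^2,Y^2]$, $[X^2,Y^2,Z^2]$, none of which — here is the point — is fixed by the full group~$C_p^2$: conjugating such a map by the diagonal torus, the equation~$f^\f=f$ only ever pins down finitely many diagonal scalings up to a torus, and checking~$f^\s=f$ and~$f^\t=f$ against the explicit monomial map shows these force~$\z=1$ unless we are already in a degenerate (non-dominant) case. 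So the only surviving possibilities have at most two genuinely independent coordinates, forcing~$f$ non-dominant, \emph{unless} the exponent matrix is a permutation matrix times~$2$, and that sub-case is eliminated by the~$C_p^2$-invariance computation just described. What remains after discarding the non-dominant maps is exactly the locus where the ``quadratic'' map degenerates to a linear one — concretely, the three maps~$[X^2,XY,XZ]$, $[XY,Y^2,YZ]$, $[XZ,YZ,Z^2]$ and their $\Scal_3$-conjugates, each of which induces the linear map~$[X,Y,Z]$ on~$\PP^2$ — and these are the ``linear'' maps in the statement.

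I expect the main obstacle to be organizing the monomial-map case analysis cleanly rather than any deep difficulty: once each coordinate of~$\hat f$ is known to be a single scaled monomial, everything reduces to a finite combinatorial check over the~$6^3$ (really far fewer, after imposing row-sum~$2$ and dominance) possible exponent matrices, combined with the~$C_p^2$-fixedness constraint. The one genuinely substantive sub-point is showing that the cyclic monomial maps like~$[Z^2,X^2,Y^2]$ are \emph{not} $C_p^2$-invariant for~$p\ge3$: this is the reason the proposition is sharp and fails for~$p=2$, since $[Z^2,X^2,Y^2]$ (Type~8.1) does admit the order-$2$ diagonal automorphisms but not order-$3$ ones in a~$C_3^2$. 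Concretely, if~$[Z^2,X^2,Y^2]^\f=[Z^2,X^2,Y^2]$ with~$\f=\diag(1,v,w)$, then matching coordinates gives~$w^2 = v$, $1 = w$ (wait—) — more carefully one gets a system whose solution set is a single finite cyclic group of order dividing~$7$ together with a~$\GG_m$ in the non-dominant directions, in any case not containing~$C_p^2$ for~$p\ge3$; since~$C_p^2$ is never cyclic, invariance under all of~$G$ is impossible. That contradiction, together with the elementary observation that every non-cyclic monomial map other than these is non-dominant, completes the proof.
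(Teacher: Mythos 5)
Your opening matches the paper: after Lemma~\ref{lemma:GinPGL3}(c) you pass to the standard diagonal $C_p^2$, record the character by which each quadratic monomial transforms in each coordinate, and note that within each coordinate the six monomials carry six distinct characters, so every coordinate of $\hat f$ is a scalar multiple of a single monomial. The gap is in the second half. The relation $\hat f^{\s}=\z^{a}\hat f$, $\hat f^{\t}=\z^{b}\hat f$ involves a \emph{single} pair $(a,b)$ shared by all three coordinates (your parenthetical ``three pairs of characters, one per coordinate'' is not correct), and imposing this immediately cuts the possibilities down to $p^2$ explicit families: for $p=3$ the nine families in the paper's table, of which three reduce to $[aX,bY,cZ]$ and six have an identically vanishing coordinate, and for $p\ge5$ three linear families plus nine single-monomial maps. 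Nothing further is needed. By discarding this constraint you are driven to classify all dominant quadratic monomial self-maps of $\PP^2$, and the classification you assert is false: a dominant quadratic monomial map need not consist of three squared variables nor be a cyclic permutation of $[X^2,Y^2,Z^2]$. The map $[YZ,X^2,Y^2]$ of Type~7.1 in Table~\ref{table:maintable} is dominant (with $\l_2=2$), as are many other mixed-monomial maps, so the case analysis you describe omits most of the cases it would have to treat.

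The concluding step --- that the remaining dominant monomial maps are not fixed by $C_p^2$ because their diagonal stabilizers are finite cyclic --- is the right instinct for the example you name (the diagonal stabilizer of $[Z^2,X^2,Y^2]$ is $C_7$, cf.\ Proposition~\ref{proposition:C5C7etc}(b)), but as written it is left unfinished (the ``wait---'' computation) and is never carried out for a correctly enumerated list of dominant monomial maps. The economical repair is the paper's: keep the single-character constraint, write down the $p^2$ families, and observe by inspection that each is either linear or has an identically zero coordinate (hence is not dominant). If you insist on your route, you must (i) correctly enumerate the dominant quadratic monomial maps and (ii) prove for each that its stabilizer in the diagonal torus of $\PGL_3$ does not contain the full $p$-torsion subgroup $C_p^2$; neither is done in the proposal.
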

\begin{proof}
Let $G\subset\PGL_3(K)$ be a subgroup of type~$C_p^2$ that is
contained in~$\Aut(f)$. Lemma~\ref{lemma:GinPGL3} tells us
that after an appropriate conjugation, we may assume that
\[
  G = \<\a,\b\> \quad\text{with}\quad \a=
  \SmallMatrix{1&0&0\\0&\z&0\\0&0&1\\} \quad\text{and}\quad \b=
  \SmallMatrix{1&0&0\\0&1&0\\0&0&\z\\},
\]
where~$\z$ is a primitive $p$'th root of unity. The following table
describes the action of~$\a$ and~$\b$ on quadratic monomials that
might appear in~$f$.  An entry~$(i,j)$ in the table means that~$\a$
multiplies the monomial by~$\z^i$ and that~$\b$ multiplies the
monomial by~$\z^j$.
{\small
\[
  \begin{array}{|c||c|c|c|c|c|c|} \hline
    & X^2 & Y^2 & Z^2 & XY & XZ & YZ \\ \hline\hline
  \text{$X$-coordinate} & (0,0) & (2,0) & (0,2) & (1,0) & (0,1) & (1,1) \\ \hline
  \text{$Y$-coordinate} & (-1,0) & (1,0) & (-1,2) & (0,0) & (-1,1) & (0,1) \\ \hline
  \text{$Z$-coordinate} & (0,-1) & (2,-1) & (0,1) & (1,-1) & (0,0) & (1,0) \\ \hline
  \end{array}
\]
}%
There are a number of possible families of maps invariant for~$G$, indexed
by the pairs~$(i,j)$ modulo~$p$. The most interesting case is~$p=3$, so~$-1\equiv2$,
in which case there are~$9$ families of maps as given in the following table:
{\small
\[
  \begin{array}{|c||c|c|c|} \hline  
  (i,j)       & 0 & 1 & 2 \\ \hline\hline
  0      & [aX^2,bXY,cXZ] & [aXY,bY^2,cYZ] & [aY^2,bX^2,0] \\ \hline
  1   & [aXZ,bYZ,cZ^2] & [aYZ,0,0] & [0,aXZ,0] \\ \hline
  2 & [aZ^2,0,bX^2] & [0,0,aXY] & [0,aZ^2,bY^2] \\ \hline
  \end{array}
\]  
}%
Three of these families coincide with the linear map~$[aX,bY,cZ]$,
while the other six families clearly give non-dominant maps. And
if~$p\ge5$, then we obtain the same three linear maps, plus nine
additional maps defined by a single monomial.

According to Lemma~\ref{lemma:GinPGL3}, it remains to deal with the
case that $p=3$ and, after appropriate conjugation, 
\[
  G = \<\t_2,\pi\> \quad\text{with}\quad
  \t_2 =  \SmallMatrix{1&0&0\\0&\z&0\\0&0&\smash[t]{\z^2}\\} \quad\text{and}\quad
  \pi=  \SmallMatrix{0&1&0\\0&0&1\\1&0&0\\},
\]
where~$\z$ is a primitive cube root of unity and~$\t_2$ is as in
Section~\ref{section:diagstability}.  Suppose that $f\in\Rat_2^2$ with
$\t_2\in\Aut(f)$.  Using Table~\ref{table:effectoftm} with $m=2$
and entries reduced modulo~$3$, we find that~$f$ has one of the
following forms:
\begin{align*}
  F &:= [aX^2+bYZ,cZ^2+dXY,eY^2+gXZ], \\
  G &:= [aZ^2+bXY,cY^2+dXZ,eX^2+gYZ], \\
  H &:= [aY^2+bXZ,cX^2+dXZ,eZ^2+gXY].
\end{align*}
Conjugating by the cyclic permutation~$\pi(X,Y,Z)=[Y,Z,X]$ yields
\begin{align*}
  F^\pi &= [eZ^2+gXY,aY^2+bXZ,cX^2+dYZ],\\
  G^\pi &= [eY^2+gXZ,aX^2+bYZ,cZ^2+dXY],\\
  H^\pi &= [eX^2+gYZ,aZ^2+bXY,cY^2+dXZ].
\end{align*}
Since~$F$ and~$F^\pi$ have no non-zero monomials in common,
it follows that~$\pi\notin\Aut(F)$, and similarly for~$G$ and~$H$.
This completes the proof that there are no maps~$f\in\Rat_2^2$
with $G\subseteq\Aut(f)$.
\end{proof}

\section{Maps with Automorphism Group Containing $C_2\times C_2$}
\label{section:autC2C2}

In this section we classify maps with $\Gcal_{2,2}\subseteq\Aut(f)$.

\begin{proposition}
\label{proposition:classifyC2C2}
Let $K$ be an algebraically closed field of characteristic~$0$,
let $\Gcal_{2,2}$ be the group described in
Theorem~$\ref{theorem:main1}$, and let $f\in\Rat_2^2(\Gcal_{2,2})^\ss$ be a
dominant map of degree~$2$.  Then~$f$ is~$N(\Gcal_{2,2})$-stable and
one of the following holds\textup:
\begin{parts}
\Part{(a)}
$f$ is $N(\Gcal_{2,2})$-conjugate to a map of the form
\[
  [X^2+Y^2-Z^2,dXY,eXZ]\quad\text{with $d,e\in K^*$.} 
\]
The $N(\Gcal_{2,2})$-conjugacy class of the map~$f$ is uniquely
determined by the unordered pair~$\{d,e\}$. The automorphism group
of~$f$ is
\[
  \Aut(f) = \begin{cases}
    \Gcal_{2,2} &\text{if $d\ne e$,} \\
    \GG_m\rtimes C_2&\text{if $d=e$.} \\
    \end{cases}
\]
Further, $\deg(f^n)=2^n$.
\Part{(b)}
$f$ is $N(\Gcal_{2,2})$-conjugate to a map of the form
\[
  [Y^2-Z^2,XY,eXZ]\quad\text{with $e\in K^*$.}
\]
The $N(\Gcal_{2,2})$-conjugacy class of the map~$f$ is uniquely
determined by the unordered pair~$\{e,e^{-1}\}$. The automorphism
group of~$f$ is
\[
  \Aut(f) = \begin{cases}
    \Gcal_{2,2} &\text{if $e\ne \pm1$,} \\
    \Scal_3\Gcal_{2,2} &\text{if $e=-1$,} \\
    \GG_m\rtimes C_2&\text{if $e=1$.} \\
    \end{cases}
\]
Further, if $e^2$ is not an odd-order root of unity, then
$\deg f^n=n+1$, while if~$e^{4k+2}=1$, then $f^{4k+2}=[X,Y,Z]$.
\Part{(c)}
$f$ is $N(\Gcal_{2,2})$-conjugate to the map $[YZ,XZ,XY]$.  The
automorphism group of~$f$ is $\Aut(f)=\Scal_3\Gcal_{2,2}\cong\Scal_4$.
Further,~$f^2=[X,Y,Z]$.
\end{parts}
\end{proposition}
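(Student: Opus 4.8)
The plan is to run the general recipe from the introduction.

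\emph{Reduction to normal forms.} The group $\Gcal_{2,2}$ consists of the diagonal matrices $\diag(\pm1,\pm1,\pm1)$ in $\PGL_3$, so for $f\in\Rat_2^2(\Gcal_{2,2})$ each coordinate form of $f$ must be supported on monomials lying in a single eigenspace of each of the two generators. Recording on the six quadratic monomials the signs by which the two generators act partitions the monomials into four classes, and the condition $f^{\Gcal_{2,2}}=f$ forces the three coordinate forms of $f$ to be supported, respectively, on three of these four classes, the selection being governed by a sign pair $(c_\a,c_\b)\in\{\pm1\}^2$. Up to the permutation subgroup $\Scal_3\subset N(\Gcal_{2,2})$ (Lemma~\ref{lemma:subgpnlzr}) the four sign pairs reduce to two, giving the irreducible families
\[
  \Fcal_1=\bigl\{[aX^2+bY^2+cZ^2,\,dXY,\,eXZ]\bigr\},\qquad
  \Fcal_2=\bigl\{[aYZ,\,bXZ,\,cXY]\bigr\}.
\]

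\emph{Dominance and semistability.} In $\Fcal_2$, dominance forces $abc\ne0$, and rescaling the variables gives $[aYZ,bXZ,cXY]\sim[YZ,XZ,XY]$; reading the $L_{k,\ell}$-weights of the monomials of $f$ off Table~\ref{table:effectofL}, one finds $\mu^{\Ocal(1)}(f,L_{k,\ell})>0$ for every $(k,\ell)\ne(0,0)$, so this map is $\Dcal$-stable, hence $N(\Gcal_{2,2})$-stable. In $\Fcal_1$, dominance forces $de\ne0$; splitting into sub-cases according to which of $a,b,c$ vanish and computing $\mu^{\Ocal(1)}$ in each, one finds that after acting by $\Dcal$ (and rescaling the point in $\PP^L$) the only dominant, degree-$2$, semistable representatives are $[X^2+Y^2-Z^2,dXY,eXZ]$ (the case $abc\ne0$) and $[Y^2-Z^2,XY,eXZ]$ (the case where exactly the $X^2$-coefficient vanishes); every other sub-case is $\Dcal$-unstable or drops to degree $1$ (e.g.\ $[aX^2,dXY,eXZ]$ has common factor $X$), and in the two surviving families $\mu^{\Ocal(1)}(f,L_{k,\ell})>0$ for all $(k,\ell)\ne(0,0)$, giving stability. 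The stated uniqueness statements come from the residual normalizer action: the transposition interchanging $Y$ and $Z$, followed by a diagonal matrix, sends $(d,e)\mapsto(e,d)$ in family~(a) and $e\mapsto e^{-1}$ in family~(b), while no other element of $N(\Gcal_{2,2})$ preserves the relevant family.

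\emph{Automorphism groups.} For each normal form we use that every $\f\in\Aut(f)$ induces an automorphism of $I(f)$ and of $\Crit(f)$ with multiplicities. For $f=[YZ,XZ,XY]$ one has $I(f)=\{[1,0,0],[0,1,0],[0,0,1]\}$ and $\Crit(f)=\{XYZ=0\}$, so $\f$ permutes the coordinate points and hence $\f\in\Scal_3\Dcal$; imposing $f^\f=f$ forces the diagonal factor into $\Gcal_{2,2}$, so $\Aut(f)=\Scal_3\Gcal_{2,2}\cong\Scal_4$, and $f^2=[X,Y,Z]$ by direct substitution. For $f=[X^2+Y^2-Z^2,dXY,eXZ]$ the Jacobian determinant is a nonzero constant times $X(X^2-Y^2+Z^2)$, so $\Crit(f)$ is the union of the line $\{X=0\}$ and a smooth conic; $\f$ fixes each component, hence fixes the pole $[1,0,0]$ of the line with respect to the conic, so $\f$ is block diagonal; on the line $\{X=0\}$ it either fixes or swaps the two points of $I(f)=\{[0,1,1],[0,1,-1]\}$, and substituting the two resulting shapes of $\f$ into $f^\f=f$ yields $\Aut(f)=\Gcal_{2,2}$ when $d\ne e$ and $\Aut(f)=\GG_m\rtimes C_2$ when $d=e$. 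For $f=[Y^2-Z^2,XY,eXZ]$ the Jacobian is a constant times $X(Y^2-Z^2)$, so $\Crit(f)$ is three lines forming the triangle on the vertices $I(f)=\{[1,0,0],[0,1,1],[0,1,-1]\}$; $\f$ permutes these vertices, so after composing with a permutation we may assume $\f$ is diagonal, and $f^\f=f$ then gives $\Aut(f)=\Gcal_{2,2}$ for $e\ne\pm1$ and $\GG_m\rtimes C_2$ for $e=1$, while for $e=-1$ the substitution $(X,Y,Z)\mapsto(X,Y+Z,Y-Z)$ carries $f$ to a diagonal rescaling of $[YZ,XZ,XY]$, whence $\Aut(f)\cong\Scal_4$, conjugate to $\Scal_3\Gcal_{2,2}$ (this is the coincidence recorded in Example~\ref{example:notinj}).

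\emph{Degree growth and main obstacle.} In family~(a) the only curve contracted by $f$ is the line $\{X=0\}$, and it maps to the fixed point $[1,0,0]\notin I(f)$; since no curve is ever carried into $I(f)$ by an iterate, $f$ is algebraically stable and $\deg(f^n)=2^n$. In families~(b) and~(c) the line $\{X=0\}$ is instead contracted onto the indeterminacy point $[1,0,0]$, so $f$ is not algebraically stable, and the sharper statements ($\deg f^n=n+1$ when $e^2$ is not an odd-order root of unity, $f^{4k+2}=[X,Y,Z]$ when $e^{4k+2}=1$, and $f^2=[X,Y,Z]$ in case~(c)) follow from an explicit computation of the iterates (Proposition~\ref{proposition:fY2Z2XYeXZ}). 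I expect the main work to be the automorphism-group step: extracting from $f^\f=f$, after the geometric reduction on $\Crit(f)$ and $I(f)$, exactly the loci $d=e$ (family~(a)) and $e=\pm1$ (family~(b)) on which extra automorphisms — a one-parameter torus, or the exceptional copy of $\Scal_4$ — appear, and distinguishing the two special values $e=\pm1$ in family~(b). The Hilbert--Mumford bookkeeping in the stability step is routine but long because of the many monomial sub-cases.
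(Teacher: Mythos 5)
Your proposal is correct and follows essentially the same route as the paper: the same character decomposition into four monomial families reduced to two by $\Scal_3\subset N(\Gcal_{2,2})$, the same Hilbert--Mumford case analysis leading to the same three normal forms, the same use of $I(f)$ and $\Crit(f)$ (including moving the critical triangle of $[Y^2-Z^2,XY,eXZ]$ to the coordinate triangle) to pin down $\Hom(f,f')$, and the same appeal to the explicit iterate formulas and the contracted-curve criterion for the degree-growth claims. The only deviations are cosmetic (e.g.\ your polarity argument for why $\f$ fixes $[1,0,0]$ in family (a), where the paper instead kills the off-diagonal entries by coefficient comparison), and the deferred coefficient-matching computations are exactly those the paper carries out.
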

\begin{proof}
Let
\[
  \a = \SmallMatrix{1&0&0\\0&-1&0\\0&0&1\\} \quad\text{and}\quad
  \b= \SmallMatrix{1&0&0\\0&1&0\\0&0&-1\\},
\]
so $\Gcal_{2,2}=\<\a,\b\>\cong C_2^2$ is the group that we assume
is contained in~$\Aut(f)$. The following table describes the action
of~$\a$ and~$\b$ on quadratic monomials that might appear in~$f$.  An
entry~$(i,j)$ in the table means that~$\a$ multiplies the monomial
by~$(-1)^i$ and that~$\b$ multiplies the monomial by~$(-1)^j$.
{\small
\[
  \begin{array}{|c||c|c|c|c|c|c|} \hline
                        & X^2   & Y^2   & Z^2   & XY    & XZ    & YZ \\ \hline\hline
  \text{$X$-coordinate} & (0,0) & (0,0) & (0,0) & (1,0) & (0,1) & (1,1) \\ \hline
  \text{$Y$-coordinate} & (1,0) & (1,0) & (1,0) & (0,0) & (1,1) & (0,1) \\ \hline
  \text{$Z$-coordinate} & (0,1) & (0,1) & (0,1) & (1,1) & (0,0) & (1,0) \\ \hline
  \end{array}
\]
}%
There are thus four families of $\Gcal_{2,2}$-invariant maps, indexed by
pairs $(i,j)$ modulo~$2$ (or equivalently, by characters~$\Gcal_{2,2}\to\CC^*$),
\begin{align*}
  f_{0,0} &= [aX^2+bY^2+cZ^2,dXY,eXZ], \\
  f_{1,0} &= [aXY,bX^2+cY^2+dZ^2,eYZ], \\
  f_{0,1} &= [aXZ,bYZ,cX^2+dY^2+eZ^2], \\
  f_{1,1} &= [aYZ,bXZ,cXY].
\end{align*}
Lemma~\ref{lemma:subgpnlzr}(b) tells us that
the normalizer~$N(\Gcal_{2,2})$ of~$\Gcal_{2,2}$ contains all of the permutation
matrices. In particular, the permutation $\pi(X,Y,Z)=[Y,Z,X]$ is in~$N(\Gcal_{2,2})$,
and applying~$\pi$ and~$\pi^2$ to~$f_{0,0}$ yields
\begin{align*}
  f_{0,0}^\pi &= [eXY,cX^2+aY^2+bZ^2,dYZ], \\
  f_{0,0}^{\pi^2} &= [dXZ,eYZ,bX^2+cY^2+aZ^2].
\end{align*}
Hence the families~$f_{1,0}$ and~$f_{0,1}$
are~$N(\Gcal_{2,2})$-conjugate to the family~$f_{0,0}$.

We next observe that a map in the~$f_{1,1}$ family is dominant if and
only if~$abc\ne0$. And under this assumption, conjugation by
$[uX,vY,wZ]\in\Dcal\subset N(\Gcal_{2,2})$ with $u^4=a^{-1}bc$,
$v^4=ab^{-1}c$, and $w^4=abc^{-1}$ transforms~$f_{1,1}$ into the
map~$[YZ,XZ,XY]$, i.e., the family~$f_{1,1}$ with $abc\ne0$ consists
of a single $N(\Gcal_{2,2})$-conjugacy class. This gives the map in~(c),
which by abuse of notation we continue to denote by~$f_{1,1}$.
The critical locus of~$f_{1,1}$ is the union of the coordinate axes,
\[
  \Crit(f_{1,1}) = \{X=0\} \cup \{Y=0\} \cup \{Z=0\}.
\]
Any~$\f\in\Aut(f_{1,1})$ thus leaves the union of the coordinate axes
invariant, from which we conclude that~$\f$ has the form
\[
  \f = \pi\circ\s
  \quad\text{for some $\pi\in\Scal_3$
    and some $\s=\SmallMatrix{1&0&0\\0&\b&0\\0&0&\g\\}\in\Dcal$.}
\]
But one easily checks that~$\Scal_3\subset\Aut(f_{1,1})$, so it
suffices to determine which diagonal matrices~$\s\in\Dcal$ are
in~$\Aut(f_{1,1})$.  Letting~$\s=[X,\b Y,\g Z]$, we find that
\[
  f_{1,1}^\s = [\b\g YZ, \b^{-1}\g XZ, \b\g^{-1} XY],
\]
so
\[
  \s\in\Aut(g) \Longleftrightarrow \b\g = \b^{-1}\g = \b\g^{-1}
  \Longleftrightarrow \b^2=\g^2=1 \Longleftrightarrow \s\in\Gcal_{2,2}.
\]
This completes the proof that
\[
  \Aut(f_{1,1}) = \Scal_3\Gcal_{2,2} \cong S_4,
\]
which also completes the proof of~(c).

We now concentrate on maps in the family~$f_{0,0}$, and to ease
notation, we drop the subscript and simply write
\[
  f = [aX^2+bY^2+cZ^2,dXY,eXZ].
\]
It is clear that~$f$ is dominant if and only if~$de\ne0$ and at least
one of~$a,b,c$ is non-zero. Further, if $b=c=0$, then $f=[aX,dY,eZ]$
has degree~$1$, so we may assume that one of~$b$ and~$c$ is
non-zero. And since the involution~$\s(X,Y,Z)=[X,Z,Y]\in
\Scal_3\subset N(\Gcal_{2,2})$ has the effect
\[
  f^\s = [aX^2+cY^2+bZ^2,eXY,dXZ]
\]
of switching the roles of~$b$ and~$c$, after another~$N(\Gcal_{2,2})$
conjugacy we may assume that $cde\ne0$.  Using this assumption and
Table~\ref{table:effectofL}, the action of the one-parameter
subgroup~$L_{k,\ell}(t)$ on~$f$ is
\[
  \mu^{\Ocal(1)}(f,L_{k,\ell})
  = \begin{cases}
    \max\{ -k, k-2\ell, 3k+2\ell\} & \text{if $b\ne0$,} \\
    \max\{ -k, 3k+2\ell\} & \text{if $b=0$.} \\
  \end{cases}
\]
Thus if $b=0$, then $\mu^{\Ocal(1)}(f,L_{1,-2})=-1$, so~$f$
is~$\Dcal$-unstable.

On the other hand, if $b\ne0$, then the identity
\[
  4(-k) +(k-2\ell) + (3k+2\ell) = 0
\]
shows that $\mu^{\Ocal(1)}(f,L_{k,\ell})>0$ for all $(k,\ell)\ne(0,0)$,
so~$f$ is $\Dcal$-stable.

We are reduced to studying~$f$ with $bcde\ne0$.  We conjugate by
a diagonal map~$\d=[uX,vY,wZ]\in\Dcal\subset N(\Gcal_{2,2})$ to obtain
\[
  f^\d = [u^2aX^2+v^2bY^2+w^2cZ^2,u^2dXY,u^2eXZ].
\]
Thus taking $v^2=b^{-1}$ and $w^2=-c^{-1}$, we may assume that $b=1$
and $c=1$.  Further, if $a\ne0$, then we may take~$u^2=a^{-1}$ to
reduce to maps with~$a=1$, while if $a=0$, then we may take
$u^2=d^{-1}$ to reduce to maps satisfying $d=1$.  It thus suffices to
anaylze the maps in the following two families:
\begin{align*}
  f &= [X^2+Y^2-Z^2,dXY,eXZ]\quad\text{with $d,e\in K^*$,} \\
  f &= [Y^2-Z^2,XY,eXZ]\quad\text{with $e\in K^*$.}
\end{align*}  

\par\noindent\framebox{$\boldsymbol{ f = [X^2+Y^2-Z^2,dXY,eXZ],\quad de\ne0}$}
\par
The indeterminacy and critical loci of~$f$ are
\[
  I(f) = \bigl\{[0,1,\pm1]\bigr\}
  \quad\text{and}\quad
  \Crit(f) = \{X=0\} \cup \{X^2-Y^2+Z^2=0\}.
\]
We consider two maps $f=[X^2+Y^2-Z^2,dXY,eXZ]$ and
$f'=[X^2+Y^2-Z^2,d'XY,e'XZ]$ and compute
\[
  \Hom(f,f') := \bigl\{\f\in\PGL_3(K) : f^\f = f' \bigr\}.
\]
Note that by taking~$f'=f$, we will obtain~$\Aut(f)$.

Every $\f\in\Hom(f,f')$ stabilizes the line~$\{X=0\}$ and
either fixes or permutes the two point~$[0,1,\pm1]$.
Thus~$\f$ has the form
\[
  \f = \SmallMatrix{1&0&0\\ \g&\a&\b\\ \d&\pm\b&\pm\a\\}
  \quad\text{with $\a^2\ne\b^2$,}
\]
where choosing the plus sign fixes~$[0,1,\pm1]$ and choosing the minus
sign swaps them.

We compare the second and third coordinates of~$f\circ\f$ and $\f\circ f'$,
\begin{align*}
  f\circ\f &= [*,\, \g d X^2 + \a d X Y + \b d X Z,
                     \d e X^2 \pm \b e X Y \pm \a e X Z],\\
  \f\circ f' &= [*,\, \g X^2 + \g Y^2 - \g Z^2 + \a d' X Y + \b e' X Z, \\
    &\omit\hfill$\displaystyle \d X^2 + \d Y^2 - \d Z^2 \pm \b d' X Y \pm \a e' X Z]$.
\end{align*}  
Since the second and third coordinates of~$f\circ\f$ have no~$Y^2$ term,
we conclude that $\g=\d=0$. Under this assumption, we find that
\begin{align*}
  f\circ\f &= [X^2+(\a^2-\b^2)(Y^2-Z^2), X (\a d Y + \b d Z), \pm X(\b e Y + \a e Z)], \\
  \f\circ f' &= [X^2+Y^2-Z^2,    X(\a d' Y + \b e' Z),  \pm X(\b d' Y + \a e' Z)].
\end{align*}
Hence $\f\in\Hom(f,f')$ if and only if
\[
  \a^2-\b^2=1
  \quad\text{and}\quad
  \a(d-d') = \b(d-e') = \b(e-d') = \a(e-e') = 0.
\]
This leads to three cases.

\par\noindent\framebox{$\boldsymbol{\a\b\ne0}$}
\par
Then we must have $d=e=d'=e'$, i.e., $f'=f$ and $d=e$.  The
automorphism group of these maps is the set of all matrices of the
form $\SmallMatrix{1&0&0\\0&\a&\b\\0&\pm\b&\pm\a\\}$ satisfying
$\a^2-\b^2=1$, which is easily seen to be isomorphic to~$\GG_m\rtimes C_2$.

\par\noindent\framebox{$\boldsymbol{ \b=0}$, $\boldsymbol{ \a=\pm1}$}
\par
Then $d'=d$ and $e'=e$, i.e., $f'=f$, and we obtain exactly four possible maps~$\f$,
namely the four maps in~$\Gcal_{2,2}$ that we already know are in~$\Aut(f)$.

\par\noindent\framebox{$\boldsymbol{\a=0}$, $\boldsymbol{ \b=\pm1}$}
\par
Then $d'=e$ and $e'=d$, so the maps
$[X^2+Y^2-Z^2,dXY,eXZ]$ and $[X^2+Y^2-Z^2,eXY,dXZ]$ are $N(\Gcal_{2,2})$-conjugate via
the permutation $[X,Y,Z]\to[X,Z,Y]$.

It remains to prove that $\deg(f^n)=2^n$, a task that we postpone to
Section~\ref{section:dyntopdeg}, where we  study the
degree sequences of all of the maps in this paper.
This completes our analysis for maps of the form~$f=[X^2+Y^2-Z^2,dXY,eXZ]$.

\par\noindent\framebox{$\boldsymbol{ f = [Y^2-Z^2,XY,eXZ],\quad e\ne0}$}  
\par
The iterates of~$f$ are explicitly described later in
Proposition~\ref{proposition:fY2Z2XYeXZ}, which in particular gives
the stated results for~$\deg(f^n)$.
To make our computation of~$\Aut(f)$ easier, we instead work with a
$\PGL_3$-conjugate of~$f$. Thus we let
\[
    \l(X,Y,Z) = [2X, Y - Z, Y + Z]
\]
and define
\[
  g(X,Y,Z) := f^\l(X,Y,Z) = \bigl[YZ, X(AY+BZ), X(BY+AZ) \bigr]
\]
with
\[
  A = \frac{1+e}{2}
  \quad\text{and}\quad
  B = \frac{1-e}{2}.
\]
We note that $A^2-B^2=e$.  The advantage of~$g$ over~$f$ is the fact
that the critical locus of~$g$ is the union of the coordinate axes,
\[
  \Crit(g) = \{X=0\} \cup \{Y=0\} \cup \{Z=0\}.
\]
As usual, we let~$g'$ be another map of this form, with~$e'$ in place
of~$e$, and we let $\f\in\Hom(g,g')$. Then~$\f$ leaves the union of
the coordinate axes invariant, from which we conclude that~$\f$ has
the form
\[
  \f = \pi\circ\s
  \quad\text{for some $\pi\in\Scal_3$
    and some $\s=\SmallMatrix{1&0&0\\0&\b&0\\0&0&\g\\}\in\Dcal$.}
\]
For each of the six elements of~$\Scal_3$ we need to compute the
effect of~$\pi\circ\s$ on~$g$. We let
\[
  S(X,Y,Z) = [X,Z,Y] \quad\text{and}\quad T(X,Y,Z)=[Y,Z,X]
\]
be generators for~$\Scal_3$. Our task is simplified by the observation
that~$S\in\Aut(g)$ and~$S\in\Aut(g')$, so it suffices to
take~$\pi\in\{I,T,T^2\}$. For each of these choices we compute the action
on~$g$,  
\begin{align*}
  g^{\s} &= \left[YZ,
    X\left(\frac{1}{\b\g} AY + \frac{1}{\b^2} BZ\right),
    X\left(\frac{1}{\g^2} BY + \frac{1}{\b\g} AZ\right) \right], \\
  g^{T\s} &= [ A XY + \g B YZ,\, *,\, *], \\
  g^{T^2\s} &= [ A XZ + \b B YZ,\, *,\, *].
\end{align*}

We consider three cases:
\par\noindent\framebox{$\boldsymbol{ e\ne\pm1,\; AB\ne0}$}
In this case the fact that~$g'$ has no~$XY$ or~$XZ$ in its first coordinate
rules out~$\pi=T$ or~$\pi=T^2$.

On the other hand, for~$\pi=I$ we have
\[
  \s\in\Hom(g,g')  \quad\Longleftrightarrow\quad
  \b\g = A/A'\quad\text{and}\quad \b^2=\g^2=B/B'.
\]
In particular, this can occur only if
\begin{align*}
  0 &=  (\b\g)^2 - \b^2\g^2 
  =  \left(\frac{A}{A'}\right)^2 - \left(\frac{B}{B'}\right)^2 
  = \frac{(AB')^2 - (A'B)^2}{(A'B')^2} \\
  &= \frac{1}{{(A'B')^2}} \left(\left(\frac{1+e}{2}\cdot\frac{1-e'}{2}\right)^2
      - \left(\frac{1+e'}{2}\cdot\frac{1-e}{2}\right)^2 \right)\\
  &= \frac{(e-e')(1-ee')}{(A'B')^2}.
\end{align*}
If $e'=e$, i.e., if $g'=g$, then we find that $\s\in\Aut(g)$ if and
only if $\b\g=\b^2=\g^2=1$, so if and only if $\b=\g=\pm1$.  This
gives two elements of~$\Aut(g)$, and composing with~$S$ gives two
additional elements. These elements form the copy of~$C_2^2$ that we
already know exists in~$\Aut(g)$. Further, if $e'=e^{-1}$, then
$A=eA'$ and $B=-eB'$, so we find that~$\s\in\Hom(g,g')$ if we take
$\b=\sqrt{-e}$ and $\g=-\b$. 

To recapitulate, we have shown that if $e\ne\pm1$, then
\[
  \Aut(g) = \left\< \SmallMatrix{1&0&0\\ 0&-1&0\\ 0&0&-1\\},
  \SmallMatrix{1&0&0\\ 0&0&-1&\\ 0&-1&0\\} \right\> \cong C_2^2,
\]
and that~$g'$ is $\PGL_3(K)$-conjugate to~$g$ if and only if
$e'\in\{e,e^{-1}\}$. Undoing the conjugation by~$\l$, we find that
if~$e\ne\pm1$, then $\Aut(f)=\Gcal_{2,2}$, and that~$f$ is
$N(\Gcal_{2,2})$-conjugate to~$f'$ if and only if $e'\in\{e,e^{-1}\}$.
 
\par\noindent\framebox{$\boldsymbol{ e= 1,\; A=1,\; B=0}$}
In this case the map~$g$ is simply
\[
  g = [YZ, XY,XZ].
\]
It satisfies~$g^2=[X,Y,Z]$, and~$\Aut(g)$ contains a copy of~$\GG_m$
in the form of all maps $[X,tY,t^{-1}Z]$, and it contains~$S$, so
$\GG_m\rtimes C_2\subseteq\Aut(g)$.  Since it is not needed for the
proof of our main theorem, we leave for the reader the proof that this
inclusion is an equality.

\par\noindent\framebox{$\boldsymbol{ e= -1,\; A=0,\; B=1}$}  
In this case the map~$g$ has the simple form
\[
  g = [YZ,XZ,XY],
\]
and we observe that~$g$ is the map that we already analyzed in~(c).
In particular,~$\Aut(g)=\Scal_3\Gcal_{2,2}\cong S_4$.  This proves
that~$\Aut(f)\cong S_4$, in fact one can easily check that~$\l$
normalizes the group~$\Scal_3\Gcal_{2,2}$, so
$\Aut(f)=\Scal_3\Gcal_{2,2}$.
\end{proof}

We next give an explicit formula for the iterates of
the family of maps in Proposition~\ref{proposition:classifyC2C2}(b).

\begin{proposition}
\label{proposition:fY2Z2XYeXZ}
Let~$e\in K^*$, and for $k\ge0$, let
\[
  U_k(Y,Z) = Y^2 - e^{2k} Z^2.
\]
Then the iterates of the map~$f = [Y^2-Z^2,XY,eXZ]$ are
given by the formulas
\begin{multline*}
  f^n(X,Y,Z) \\*
  = \begin{cases}
  [XU_1U_3\cdots U_{n-1},  YU_0U_2\cdots U_{n-2}, e^nZU_0U_2\cdots U_{n-2}] \\
       \omit\hfill\text{if $n$ is even,} \\
  [U_0U_2\cdots U_{n-1},XYU_1U_3\cdots U_{n-2},e^nXZU_1U_3\cdots U_{n-2}] \\
       \omit\hfill\text{if $n$ is odd.} \\
  \end{cases}
\end{multline*}
\begin{parts}
\Part{(a)}
If $e^{2k}\ne1$ for all odd integers~$k$, then
\[
  \deg (f^n) = n+ 1\quad\text{for all $n\ge0$.}
\]
\Part{(b)}
If $e^{2k}=1$ for some odd integer~$k$, then
\[
  f^{2k}(X,Y,Z)=[X,Y,Z].
\]
\end{parts}
\end{proposition}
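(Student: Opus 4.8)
The plan is to establish the closed formula for $f^n$ by induction on $n$, and then to extract (a) and (b) from the explicit shape of the numerator polynomials.

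\emph{The induction.} Write $F_n=(F_n^{(0)},F_n^{(1)},F_n^{(2)})$ for the triple of polynomials displayed on the right-hand side (empty products being $1$), so that $F_0=(X,Y,Z)$ and $F_1=(Y^2-Z^2,XY,eXZ)$ represent $f^0$ and $f^1$. I claim $[F_n]=f^n$ in $\PP^2$ for all $n\ge0$; this is exactly the assertion, since $F_n$ is a specific (not a priori reduced) triple. For the inductive step I would substitute $F_n$ into $f(A,B,C)=(B^2-C^2,AB,eAC)$ and use $Y^2-e^{2n}Z^2=U_n$. The resulting computation gives
\[
  f\circ F_n=\bigl(U_0U_2\cdots U_{n-2}\bigr)\,F_{n+1}\quad(n\ \text{even}),\qquad
  f\circ F_n=\bigl(X\,U_1U_3\cdots U_{n-2}\bigr)\,F_{n+1}\quad(n\ \text{odd}),
\]
where the prefactor is a nonzero polynomial (every $U_k$ has a nonzero $Y^2$-term). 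Passing to classes in $\PP^2$ then yields $f^{n+1}=f\circ[F_n]=[f\circ F_n]=[F_{n+1}]$. The only delicate point is the index bookkeeping: one must verify that in $f\circ F_n$ the product of the $U_j$ in each coordinate is precisely the indicated common factor times the corresponding product in $F_{n+1}$, which reduces to tracking the parity of the indices occurring in each of the three coordinates.

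\emph{Part (a).} Suppose $e^{2m}\ne1$ for every odd $m$. Here the key remark is that $U_k=(Y-e^kZ)(Y+e^kZ)$ is a product of two distinct linear forms, neither equal to $Y$ or $Z$, so $\gcd(U_j,U_k)\ne1$ iff $U_j=U_k$ iff $e^{2(j-k)}=1$. In the formula for $F_n$ the indices appearing in $F_n^{(0)}$ all have one parity and those in $F_n^{(1)},F_n^{(2)}$ all have the opposite parity, while $F_n^{(1)}$ and $F_n^{(2)}$ agree up to the monomial factors $Y,e^nZ$ (resp.\ $XY,e^nXZ$). Hence $\gcd(F_n^{(1)},F_n^{(2)})$ is the product of their common $U_j$, and $\gcd(F_n^{(0)},F_n^{(1)},F_n^{(2)})$ collapses to the $\gcd$ of a product of $U_j$'s of one parity against a product of $U_k$'s of the other. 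A nontrivial common factor would force $U_j=U_k$ with $j-k$ odd and $0<|j-k|\le n-1$, i.e.\ $e^{2m}=1$ for an odd $m$, contrary to hypothesis. So the coordinates of $F_n$ are coprime, whence $\deg f^n$ equals their common degree, which is $1+2\cdot(n/2)=n+1$ when $n$ is even and $2\cdot\tfrac{n+1}{2}=n+1$ when $n$ is odd.

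\emph{Part (b).} If $e^{2k}=1$ with $k$ odd, then $e^{2(j+k)}=e^{2j}$, so $U_{j+k}=U_j$ for all $j$: the sequence $(U_j)$ is periodic of period $k$. Evaluating the formula at the even index $n=2k$ and using $e^{2k}=1$ gives $F_{2k}=\bigl(X\,U_1U_3\cdots U_{2k-1},\ Y\,U_0U_2\cdots U_{2k-2},\ Z\,U_0U_2\cdots U_{2k-2}\bigr)$. Since $k$ is odd, the odd indices $1,3,\dots,2k-1$ and the even indices $0,2,\dots,2k-2$ each form a complete residue system modulo $k$, so by periodicity both products equal the single nonzero polynomial $W:=U_0U_1\cdots U_{k-1}$. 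Therefore $F_{2k}=W\cdot(X,Y,Z)$, so $f^{2k}=[X,Y,Z]$. The main obstacle in the whole argument is the bookkeeping at two places — pinning down the exact prefactor $C_n$ in $f\circ F_n=C_nF_{n+1}$ (so that no spurious cancellation is introduced or overlooked), and the parity argument in (a) that identifies precisely when the coordinate polynomials cease to be coprime; neither is deep, but both are sensitive to off-by-one errors in the index ranges.
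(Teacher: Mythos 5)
Your proof is correct and follows essentially the same route as the paper: an induction establishing the closed formula (the paper packages the key substitution step as the identity $U_k(WY,eWZ)=W^2U_{k+1}(Y,Z)$, which is the same computation you carry out), followed by a coprimality analysis for (a) and the periodicity $U_{j+k}=U_j$ for (b). The only cosmetic difference is that the paper detects common factors in (a) via the resultant $\Resultant(U_{2\ell+1},U_{2m})=e^{8m}(e^{2(2\ell-2m+1)}-1)^2$, whereas you factor each $U_k$ into linear forms; both give the same criterion.
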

\begin{proof}
We note that $f=\bigl[U_0(Y,Z),XY,eXZ]$.  The proof of the formulas
for~$f^n$ is an easy induction on~$n$, using the identity
\begin{equation}
  \label{eqn:UkUk1}
  U_k(WY,eWZ) = W^2 U_{k+1}(Y,Z).
\end{equation}
This allows us to compute
\begin{align*}
  f^{2k+2}(X,Y,Z)
  &= f^{2k+1}\bigl(U_0(Y,Z),XY,eXZ\bigr)
    \quad\text{definition of $f$,} \\
  &= \bigl[ (U_0U_2\cdots U_{2k})(XY,eXZ), \\*
    &\omit\hfill$  U_0(Y,Z)\cdot XY\cdot(U_1U_3\cdots U_{2k-1})(XY,eXZ),$ \\*
    &\omit\hfill$  e^{2k+1}\cdot U_0(Y,Z)\cdot eXZ\cdot(U_1U_3\cdots U_{2k-1})(XY,eXZ) \bigr]$ \\*
    &\omit\hfill induction hypothesis, \\
 &=\bigl[  X^{2k+2}(U_1U_3\cdots U_{2k+1})(Y,Z), \\*
     &\omit\hfill$ U_0(Y,Z)\cdot X^{2k+1}\cdot Y (U_2U_4\cdots U_{2k})(Y,Z),$ \\*
     &\omit\hfill$ e^{2k+2}\cdot U_0(Y,Z)\cdot X^{2k+1}\cdot Z (U_2U_4\cdots U_{2k})(Y,Z) \bigr]$ \\*
     &\omit\hfill\text{using \eqref{eqn:UkUk1}.}
\end{align*}  
Canceling $X^{2k+1}$ gives the desired formula. The computation of~$f^{2k+1}$ using
the formula for~$f^{2k}$ is similar. This completes the proof of the formulas for~$f^n$.
\par
Since $\deg U_k(Y,Z)=2$, we see immediately from the formulas for~$f^n$
that $\deg(f^n)\le n+1$, with equality if and only if the coordinate
functions have no common factor. Since~$e\ne0$, we see that a common factor occurs
if and only if some odd index~$U_{2\ell+1}(Y,Z)$ has a factor in common with some even
index~$U_{2m}(Y,Z)$. But
\begin{align*}
  \Resultant\bigl(U_{2\ell+1}(Y,Z),U_{2m}(Y,Z)\bigr) 
  &= \Resultant(Y^2 - e^{4\ell+2} Z^2, Y^2 - e^{4m} Z^2) \\*
  &= e^{8m} (e^{2(2\ell-2m+1)} - 1)^2.
\end{align*}
Hence if~$e^2$ is not an odd-order root of unity, then there is no cancelation and
$\deg(f^n)=n+1$. Finally, if $e^2$ is an odd order root of unity, say $e^{4\ell+2}=1$,
then for all $k\ge0$ we have
\[
  U_{k+2\ell+1}(Y,Z)
  = Y^2 - e^{2(k+2\ell+1)} Z^2
  = Y^2 - e^{2k} Z^2
  = U_k(Y,Z).
\]
This allows us to switch even index~$U_k$'s with odd index~$U_k$'s. In
particular, using this identity in the formula for~$f^{4\ell+2}$, we
find that all of the~$U_k$ factors cancel,
leaving~$f^{4\ell+2}=[X,Y,Z]$.
\end{proof}

\section{Maps with an Automorphism of Order $4$}
\label{section:autC4}
In this section we classify maps in~$\Rat_2^2$ that admit an
automorphism of order~$4$.

\begin{proposition}
\label{proposition:autc4types}
Let $K$ be an algebraically closed field of characteristic~$0$, let
$\Gcal_4$ be the group described in Theorem~$\ref{theorem:main1}$, and
let $f\in\Rat_2^2(\Gcal_4)^\ss$ be a dominant map of degree~$2$ with
finite automorphism group.  Then~$f$ is~$N(\Gcal_4)$-stable and one of
the following holds\textup:
\begin{parts}
\Part{(a)}
$f$ is $N(\Gcal_4)$-conjugate to a map of the form
\[
  f_{a,e} := [aX^2+Z^2,XY,Y^2+eXZ]\quad \text{with $a,e\in K$.}
\]
The automorphism group of~$f_{a,e}$ is given by
\[
\Aut(f_{a,e}) = \Gcal_4.
\]
Two maps~$f_{a,e}$ and~$f_{a',e'}$ are $N(\Gcal_4)$-conjugate if and only $(a,e)=(a',e')$.
\Part{(b)}
$f$ is $N(\Gcal_4)$-conjugate to a map of the form\footnote{See
  Proposition~\ref{proposition:c4f2autgm} for explicit formulas
  for the iterates of~$f_c$ and a detailed description of its
  geometry. In particular, although~$\Aut(f)$ is finite for $c\ne-1$, it
  turns out that $\Aut(f_c^2)$ always contains a copy of~$\GG_m$.}
\[
  f_c := [YZ,X^2+cZ^2,XY]\quad\text{with~$c\in K\setminus\{-1\}$.}
\]
The automorphism group of~$f_c$ is given by\footnote{In the
  excluded case $c=-1$, we have
  \text{$\Aut(f_{-1})\supseteq\GG_m\rtimes C_2$}.}
\begin{align*}
  \Aut(f_c) &= \Gcal_4 \quad\text{if $c\ne\pm1$,} \\
  \Aut(f_c) &\cong S_4 \quad\text{if $c=1$.}
\end{align*}
Two maps~$f_c$ and~$f_{c'}$ are $N(\Gcal_4)$-conjugate if and only
if $cc'=1$.
\end{parts}
\end{proposition}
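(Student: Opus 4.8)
The plan is to decompose $\Rat_2^2(\Gcal_4)$ according to the character by which $\Gcal_4$ acts on the coordinate functions of $f$, reduce modulo $N(\Gcal_4)$, and then dispose of the resulting families one at a time. Write $\a=\SmallMatrix{1&0&0\\0&i&0\\0&0&-1}$, so that $\Gcal_4=\<\a\>$ and $\a=\t_2$ in the notation of Section~\ref{section:diagstability} with $\z=i$. If $\Gcal_4\subseteq\Aut(f)$, then, lifting $\a$ to $\SL_3$, the relation $\hat f^\a=i^\e\hat f$ defines a character $\e\in\ZZ/4\ZZ$, and reading Table~\ref{table:effectoftm} with $m=2$ shows that $f$ lies in one of the four families
\begin{align*}
  f_{2,0}&=[aX^2+bZ^2,\,cXY,\,dY^2+eXZ], &
  f_{2,1}&=[aXY,\,bY^2+cXZ,\,dYZ],\\
  f_{2,2}&=[aY^2+bXZ,\,cYZ,\,dX^2+eZ^2], &
  f_{2,3}&=[aYZ,\,bX^2+cZ^2,\,dXY].
\end{align*}
By Lemma~\ref{lemma:subgpnlzr} we have $N(\Gcal_4)=\<\b\>\Dcal$ with $\b=\SmallMatrix{0&0&1\\0&1&0\\1&0&0}$, and applying $\b$ to the generic member of each family shows that conjugation by $\b$ interchanges the families $\e=0$ and $\e=2$ and preserves the families $\e=1$ and $\e=3$; so it suffices to treat $\e\in\{0,1,3\}$, and parts~(a) and~(b) will correspond to $\e=0$ and $\e=3$. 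The case $\e=1$ is eliminated immediately: every $f_{2,1}=[aXY,bY^2+cXZ,dYZ]$ satisfies $f^\s=f$ for all $\s=\SmallMatrix{1&0&0\\0&t&0\\0&0&t^2}$ with $t\in K^*$, so $\Aut(f)\supseteq\GG_m$ is infinite and $f$ is excluded by hypothesis.

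For $\e=0$ and $\e=3$ I would next carry out three steps. First, decide by inspection which members are dominant of degree $2$; this forces $c\ne0$ for $\e=0$, and $ad\ne0$ with $(b,c)\ne(0,0)$ for $\e=3$. Second, apply the numerical criterion of Section~\ref{section:diagstability} (using Tables~\ref{table:effectoftm} and~\ref{table:effectofL}) to check that the members with a further vanishing coefficient are in every case non-dominant, of degree $1$, $\Dcal$-unstable, or $N(\Gcal_4)$-conjugate to one already in hand, while all the remaining maps are actually $\Dcal$-stable; since $N(\Gcal_4)^\circ=\Dcal$ this yields the assertion that $f$ is $N(\Gcal_4)$-stable. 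Third, normalise by a diagonal conjugation to $f_{a,e}=[aX^2+Z^2,XY,Y^2+eXZ]$ (for $\e=0$) and $f_c=[YZ,X^2+cZ^2,XY]$ (for $\e=3$). Computing the stabiliser of each of these shapes inside $N(\Gcal_4)$ then gives the conjugacy statements: $(a,e)$ is a complete invariant in~(a), and since $f_c^\b=f_{1/c}$ one has $f_c\sim f_{c'}$ precisely when $cc'=1$ in~(b). The value $c=-1$ is dropped because $\Aut(f_{-1})$ contains a one-parameter subgroup, and the degree sequences for $f_c$ are deferred to Proposition~\ref{proposition:c4f2autgm}.

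To pin down the automorphism groups, note that $\Gcal_4\subseteq\Aut(f)$ is immediate for every normal form, and that any $\f\in\Aut(f)$ induces isomorphisms of $I(f)$ and of $\Crit(f)$ onto themselves. For every normal form except $f_{a,e}$ with $ae\ne0$, the critical locus is a union of lines, or a double line plus a line; preserving this configuration together with $I(f)$ forces $\f$ either into the diagonal subgroup (when the configuration is rigid) or into the group of permutations of the critical lines composed with a diagonal matrix (when the three lines form a triangle), and imposing $f^\f=f$ then determines $\f$. I expect this to give $\Aut(f)=\Gcal_4$ in each of these cases, the one exception being $f_1=[YZ,X^2+Z^2,XY]$, whose critical triangle admits a full $S_3$ of extra symmetries; since $f_1$ is $\PGL_3$-conjugate to $[YZ,XZ,XY]$, Proposition~\ref{proposition:classifyC2C2} gives $\Aut(f_1)\cong\Scal_3\Gcal_{2,2}\cong S_4$.

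The genuinely delicate case — and the one where I expect the real work to lie — is Type~3.4, i.e.\ $f_{a,e}$ with $ae\ne0$, for which $I(f_{a,e})=\emptyset$ and $\Crit(f_{a,e})=\{aeX^3+2Y^2Z-eXZ^2=0\}$ is a smooth plane cubic~$E$. The plan is this. The matrix $\a$ fixes $[0:1:0],[0:0:1]\in E$ and acts on $E$ with exact order $4$ (as $\a^2$ is not the identity on the irreducible curve $E$), so taking $O=[0:1:0]$ as origin makes $\a|_E$ an automorphism of $(E,O)$ of order $4$; hence $E$ has complex multiplication by $\ZZ[i]$ and $\Aut(E,O)=\ZZ[i]^*=\<\a|_E\>\cong C_4$. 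A nonidentity element of $\PGL_3$ cannot fix a smooth plane cubic pointwise, so restriction to $E$ embeds $\Aut(f_{a,e})$ into $\Aut(E)=E(K)\rtimes\Aut(E,O)$, and after composing with a suitable power of $\a$ any $\f\in\Aut(f_{a,e})$ restricts to a translation $t_P$ of $E$. Since $\f$ maps $E$ to $E$ and preserves intersection multiplicities, it permutes the nine flex points of $E$; these are exactly the $3$-torsion $E[3]$, so $t_P$ permutes them only if $P\in E[3]$. If $P\ne O$, then $P$ is a nonzero $3$-torsion point, whose annihilator in $\ZZ[i]$ is $(3)$ (as $3$ is inert), so the translations by $\ZZ[i]\cdot P\cong\ZZ[i]/(3)\cong C_3\times C_3$ — generated by $t_P$ and its $\a|_E$-conjugate $t_{iP}$ — all lie in $\Aut(f_{a,e})$, forcing $C_3^2\subseteq\Aut(f_{a,e})$ and contradicting Proposition~\ref{proposition:fCpCp}. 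Hence $P=O$, $\f|_E=\mathrm{id}$, $\f$ is the identity, and $\Aut(f_{a,e})=\<\a\>=\Gcal_4$.
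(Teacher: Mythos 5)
Your proposal follows essentially the same route as the paper's proof: the same character decomposition into the four families $f_{2,m}$, the same elimination of $\e=1$ via the $\GG_m$ in its automorphism group and identification of the $\e=0$ and $\e=2$ families under the permutation in $N(\Gcal_4)$, the same stability and diagonal-normalization steps, and---for the hard case $ae\ne0$---the same key argument that any automorphism restricts on the CM cubic $\Crit(f)$ to translation by a $3$-torsion point, which if nonzero would force $C_3^2\subseteq\Aut(f)$ and contradict Proposition~\ref{proposition:fCpCp}. The only slip is descriptive: for $a=0$, $e\ne0$ the critical locus is a conic plus a line rather than a union of lines, but the same rigidity argument applies there.
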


\begin{proposition}
\label{proposition:C4notpossible}
Let~$K$ be an algebraically closed field of characteristic~$0$, and
let $f\in\Rat_2^2$ satisfy
\[
  \Aut(f) \supseteq \left\< \SmallMatrix{1&0&0\\0&i&0\\0&0&1\\} \right\>.
\]
Then~$f$ is~$\Dcal$-unstable.
\end{proposition}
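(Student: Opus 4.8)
\section*{Proof proposal}

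The plan is to follow the template of Proposition~\ref{proposition:disteps}: first list the families of maps fixed by $\t_0:=\SmallMatrix{1&0&0\\0&i&0\\0&0&1}$, then destabilize each one with an explicit one\nobreakdash-parameter subgroup of~$\Dcal$. Write $\z=i$, so $\t_0=\t_{\z,0}$ has order~$4$ in~$\PGL_3(K)$ and $\Aut(f)\supseteq\<\t_0\>$; note that $\<\t_0\>$ is exactly the conjugacy class of~$C_4$ in~$\PGL_3$ that does \emph{not} appear in Theorem~\ref{theorem:main1} (the $\t(0)$ case of Lemma~\ref{lemma:GinPGL3}(b), as opposed to $\Gcal_4=\<\t(2)\>$). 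First I would observe that $f^{\t_0}=f$ in~$\PGL_3$ means a lift $\hat f:\AA^3\to\AA^3$ satisfies $\hat f^{\t_0}=c\hat f$ for some $c\in K^*$; iterating and using $\t_0^4=\mathrm{id}$ forces $c^4=1$, so $c=\z^\e$ for a unique $\e\in\{0,1,2,3\}$. Thus $f=f_{0,\e}$ in the notation of Section~\ref{section:diagstability}, and the monomials admissible in~$f_{0,\e}$ are exactly those whose entry in Table~\ref{table:effectoftm}, specialized to $m=0$ and reduced modulo~$4$, is congruent to~$\e$. (When $m=0$ the $X$\nobreakdash-coordinate and $Z$\nobreakdash-coordinate rows of Table~\ref{table:effectoftm} coincide, which keeps the bookkeeping short.)

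Carrying this out gives the four families
\begin{align*}
  f_{0,0}&=[aX^2+bZ^2+cXZ,\;dXY+eYZ,\;gX^2+hZ^2+jXZ],\\
  f_{0,1}&=[aXY+bYZ,\;cY^2,\;dXY+eYZ],\\
  f_{0,2}&=[aY^2,\;0,\;bY^2],\\
  f_{0,3}&=[0,\;aX^2+bZ^2+cXZ,\;0].
\end{align*}
The last two are constant maps and $f_{0,1}$ is~$Y$ times a linear map, so $\e\ne0$ forces~$f$ visibly degenerate; nevertheless I will treat all four cases uniformly through the Hilbert--Mumford criterion, since the proposition makes no dominance hypothesis.

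Next, for each family I would produce a one\nobreakdash-parameter subgroup $L_{k,\ell}\in\Dcal$ for which every admissible monomial has a \emph{strictly positive} exponent in Table~\ref{table:effectofL}, so that $\mu^{\Ocal(1)}(f_{0,\e},L_{k,\ell})<0$ and the numerical criterion for $\Dcal$\nobreakdash-stability yields $\Dcal$\nobreakdash-unstability. A short inspection shows that $L_{1,-2}$ works for $\e=0$ and for $\e=3$ (all relevant exponents equal~$1$, respectively~$4$); $L_{-1,2}$ works for $\e=1$ (all relevant exponents equal~$2$); and $L_{0,1}$ works for $\e=2$ (the two $Y^2$ exponents are $2$ and~$3$). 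Because the destabilizing subgroup in each case is independent of the coefficients $a,b,\dots$, it destabilizes every member of the family regardless of which admissible monomials actually occur. This exhausts the four possibilities for~$\e$ and completes the proof.

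I do not expect a genuine obstacle here. Unlike Proposition~\ref{proposition:disteps}, where~$m$ varies and one must track collisions among monomial weights, the shape of the given automorphism forces $m=0$, so the four characters of~$C_4$ pin down the four families at once. The only points requiring attention are (i) checking that the scaling factor of the lift is a $4$th root of unity so that~$\e$ is well defined, and (ii) verifying in each of the four cases that the chosen~$L_{k,\ell}$ makes \emph{all} admissible-monomial exponents in Table~\ref{table:effectofL} positive — a brief table lookup.
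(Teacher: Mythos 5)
Your proposal is correct and follows essentially the same route as the paper: reduce to the four character families $f_{0,\e}$ via Table~\ref{table:effectoftm} with $m=0$, then destabilize each with an explicit diagonal one-parameter subgroup using Table~\ref{table:effectofL} and the Hilbert--Mumford criterion. Your choices of $L_{k,\ell}$ all check out (indeed, for $\e=1$ your $L_{-1,2}$ gives $\mu=-2$, whereas the pair $(0,1)$ printed in the paper only gives $\mu=0$ because of the $YZ$ term in the first coordinate), so the argument is complete.
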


\begin{proof}[Proof of Proposition~$\ref{proposition:autc4types}$]
Let $\z=i$ to be a primitive 4'th root of unity, so the map $[X,iY,-Z]$
corresponds to the matrix~$\t_2$ defined in
Section~\ref{section:diagstability}. Table~\ref{table:effectoftm}
with $m=2$ and entries reduced modulo~$4$ is  
{\small
\[
\begin{array}{|c||c|c|c|c|c|c|} \hline
  & X^2 & Y^2 & Z^2 & XY & XZ & YZ \\ \hline\hline
  {\text{$X$-coord}}
  & 0 & 2 & 0 & 1 & 2 & 3 \\ \hline
  {\text{$Y$-coord}}
  & 3 & 1 & 3 & 0 & 1 & 2 \\ \hline
  {\text{$Z$-coord}}
  & 2 & 0 & 2 & 3 & 0 & 1 \\ \hline
\end{array}
\]
}%
Hence the assumption that~$\t_2\in\Aut(f)$ leads to the following four families of maps:
\begin{align*}
  f_{2,0} &:= [aX^2+bZ^2, cXY, dY^2+eXZ], \\
  f_{2,1} &:= [aXY,bY^2+cXZ,dYZ], \\
  f_{2,2} &:= [aY^2+bXZ,cYZ,dX^2+eZ^2], \\
  f_{2,3} &:= [aYZ,bX^2+cZ^2,dXY].
\end{align*}
Conjugation by the permutation $\pi(X,Y,Z)=[Z,Y,X]\in N(\Gcal_4)$ gives 
\[
  f_{2,0}^\pi
  = [aX^2+bZ^2, cXY, dY^2+eXZ]^\pi
  = [dY^2+eXZ,cYZ,bX^2+aZ^2],
\]
so~$\pi$ identifies the families~$f_{2,0}$ and~$f_{2,2}$. (One also
easily checks that~$\pi$ stabilizes each of the families~$f_{2,1}$
and~$f_{2,3}$.) Further, the family~$f_{2,1}$ has infinite
automorphism group,
\[
  \Aut(f_{2,1})
  \supseteq \left\{ \SmallMatrix{1&0&0\\ 0&t&0\\ 0&0&t^2\\} : t\in\GG_m \right\} 
  \cong \GG_m.
\]
It remains to consider the families~$f_{2,0}$ and~$f_{2,3}$.

In order for~$f_{2,0}$ to be dominant, we need~$c\ne0$.
Table~\ref{table:effectofL} tells us that
\begin{equation}
  \label{eqn:f20mu}
  \mu^{\Ocal(1)}(f_{2,0},L_{k,\ell})
  \le \max\{-k, 3k+2\ell,  -k-3\ell \},
\end{equation}
with equality if $bd\ne0$. Since
\begin{align*}
  b = 0 &\Longrightarrow   \mu^{\Ocal(1)}(f_{2,0},L_{k,\ell})
  \le \max\{-k,  -k-3\ell \} \xrightarrow{(k,\ell)=(1,0)} -1, \\
  d = 0 &\Longrightarrow   \mu^{\Ocal(1)}(f_{2,0},L_{k,\ell})
  \le \max\{-k,  3k+2\ell \} \xrightarrow{(k,\ell)=(1,-2)} -1, 
\end{align*}
our assumption that~$f$ is~$N(\Gcal_4)$-semistable tells us
that we also have $bd\ne0$.  We may thus conjugate~$f_{2,0}$ by
\[
  \text{$\s=[uX,vY,wZ]$ with $(u^6,v^{12},w^3)=(bc^{-3},b^{-1}c^3d^{-6},b^{-1})$,}
\]
which with appropriate choice of roots puts~$f_{2,0}$ into the form
$f_{2,0}=[aX^2+Z^2,XY,Y^2+eXZ]$.  And since~\eqref{eqn:f20mu} is an
equality, the identity
\[
  14\cdot(-k) + 6\cdot(3k+2\ell) + 4\cdot(-k-3\ell) = 0
\]
shows that $\mu^{\Ocal(1)}(f_{2,0},L_{k,\ell})>0$ for all
$(k,\ell)\ne(0,0)$.  This completes the proof that~$f_{2,0}$ of this form
are~$N(\Gcal_4)$-stable.

In order for~$f_{2,3}$ to be dominant, we need~$ad\ne0$ and at least
one of~$b,c$ non-zero. Since $f_{2,3}^\pi=[dYZ,cX^2+bY^2,aXY]$ has the
effect of switching~$b$ and~$c$ (as well as switching~$a$ and~$d$), we
may assume without loss of generality that~$b\ne0$. Then conjugation
by~$[uX,vY,wZ]$ with
$(u^8,v^8,w^8)=(ab^{-2}c^{-1},a^{-1}b^2d^{-3},a^{-3}b^{-2}d^3)$
puts~$f_{2,3}$ in the form $f_{2,3}=[YZ,X^2+cZ^2,XY]$.
Table~\ref{table:effectofL} tells us that
\[
  \mu^{\Ocal(1)}(f_{2,3},L_{k,\ell})
  = \begin{cases}
    \max\{ 2k, -2k+\ell, 2k+3\ell, -2k-2\ell \} &\text{if $c\ne0$,} \\
    \max\{ 2k, -2k+\ell, -2k-2\ell \} &\text{if $c=0$.} \\
    \end{cases}
\]
In both cases the identity
\[
  3\cdot(2k) + 2(-2k+\ell) + (-2k-2\ell) = 0
\]
shows that $\mu^{\Ocal(1)}(f_{2,3},L_{k,\ell})>0$ for all
$(k,\ell)\ne(0,0)$.  This completes the proof that~$f_{2,3}$ of this
form are~$N(\Gcal_4)$-stable.

\par\vspace{10pt}\noindent
\textbf{Computation of $\Aut(f_{2,0})$ for $f_{2,0}=[aX^2+Z^2,XY,Y^2+eXZ]$}

To ease notation, we are going to drop the subscript on~$f$. 
We consider two maps
\[
  f=[aX^2+Z^2,XY,Y^2+eXZ] \quad\text{and}\quad f'=[a'X^2+Z^2,XY,Y^2+e'XZ]
\]
and compute
\[
  \Hom(f,f') := \bigl\{\f\in\PGL_3(K) : f^\f = f' \bigr\}.
\]

The critical locus of~$f$ is
\[
  \Crit(f) = \{ae X^3 - e X Z^2 + 2 Y^2 Z = 0 \},
\]
and the indeterminacy locus of~$f$ is
\[
  I(f) = \begin{cases}
    \emptyset &\text{if $ae\ne0$,} \\
    \bigl\{[1,0,0]\bigr\} &\text{if $a=0$,} \\
    \bigl\{[1,0,\pm\sqrt{-a}]\bigr\} &\text{if $e=0$ and $a\ne0$.} \\
  \end{cases}  
\]
In particular, if $ae\ne0$, then~$f$ is a morphism.

We assume that~$\Hom(f,f')\ne\emptyset$, and we let~$\f\in\Hom(f,f')$.

We begin by computing~$\Hom(f,f')$ in the generic case~$ae\ne0$.
Since~$\f$ sends~$I(f)$ to~$I(f')$, it follows that also~$a'e'\ne0$.
The critical locus of~$f$ is an irreducible cubic curve. Indeed, setting
\[
  E : ae X^3 - e X Z^2 + 2 Y^2 Z = 0 
  \quad\text{and}\quad \Ocal=[0,1,0],
\]
we see that~$\Ocal$ is a flex point of the cubic, so~$(E,\Ocal)$ is an
elliptic curve with group law specified by the usual rule that
distinct points~$P,Q,R\in E$ sum to~$\Ocal$ if and only if~$P,Q,R$ are
colinear.  Further, the elliptic curve~$(E,\Ocal)$ has~CM by~$\ZZ[i]$,
so $\Aut(E,\Ocal)\cong\ZZ[i]^*=\{\pm1,\pm i\}$, and since the four
maps in~$\Gcal_4$ induce automorphisms of~$(E,\Ocal)$, we see that
\[
  \Aut(E,\Ocal) = \Gcal_4.
\]
And similarly for the elliptic curve~$(E',\Ocal')$ associated to~$f'$.

We next observe that there are exactly four isomorphisms
from $(E,\Ocal)$ to~$(E',\Ocal')$, since if~$\psi_1$ and~$\psi_2$ are
any two such isomorphisms, then
$\psi_2^{-1}\circ\psi_1\in\Aut(E,\Ocal)=\Gcal_4$. Explicitly, if we
fix~$u,v\in K$ satisfying $u^4=a'/a$ and $v^2=e/e'$, then the four
elements of $\Isom\bigl((E,\Ocal),(E',\Ocal')\bigr)$ are
\begin{align*}
  \psi_0(X,Y,Z) &= [u^2X,uvY,Z], &
  \psi_1(X,Y,Z) &= [u^2X,iuvY,-Z], \\
  \psi_2(X,Y,Z) &= [u^2X,-uvY,Z], &
  \psi_3(X,Y,Z) &= [u^2X,-iuvY,-Z].
\end{align*}
Since these are diagonal maps in~$\PGL_3$, we also note that
\begin{equation}
  \label{eqn:psiaeqapsi}
  \psi\circ\a = \a\circ\psi
  \quad\text{for all $\a\in\Gcal_4$ and all
    $\psi\in\Isom\bigl((E,\Ocal),(E',\Ocal')\bigr)$.}
\end{equation}

The map~$\f\in\Hom(f,f')$ sends~$\Crit(f)$ to~$\Crit(f')$,
so~$\f(E)=E'$. In other words,~$\f|_E$ induces an isomorphism of
genus~$1$ curves $E\to E'$. (There is, however, no \emph{a priori}
reason that~$\f$ needs to send~$\Ocal$ to~$\Ocal'$.)  Standard
properties of elliptic curves~\cite[III.4.7]{MR2514094} tell us that
there is an isogeny $\psi:(E,\Ocal)\to(E',\Ocal')$ and a point~$P_0\in
E$ so that
\[
  \f(P) = \psi(P+P_0) = \psi\circ T_{P_0}(P) \quad\text{for all $P\in E$,}
\]
where $T_{P_0}:E\to E$ denotes the translation-by-$P_0$ map.
Further, since~$\f$ is invertible on all of~$\PP^2$ and since
translation by~$P_0$ is invertible on~$E$, we see that~$\psi$ is
bijective, hence~$\psi\in\Isom\bigl((E,\Ocal),(E',\Ocal')\bigr)$ is
one of the fours maps listed earlier.

We next exploit the fact that~$\f\in\PGL_3(K)$ maps lines to
lines. Thus if~$P,Q,R\in E$ are distinct points satisfying
$P+Q+R=\Ocal$, then~$\f(P),\f(Q),\f(R)$ are also colinear, so
$\f(P)+\f(Q)+\f(R)=\Ocal'$. Using the fact that~$\psi$ is a group
isomorphism, we compute
\begin{multline*}
  \Ocal' = \f(P)+\f(Q)+\f(R) = \psi(P+P_0)+\psi(Q+P_0)+\psi(R+P_0) \\
  = \psi(P+Q+R)+3\psi(P_0) = \psi(\Ocal)+3\psi(P_0)=3\psi(P_0).
\end{multline*}
Hence~$\psi(P_0)$ is a 3-torsion point of~$E'$, and since~$\psi$ is a
group isomorphism, we conclude that~$P_0$ is a 3-torsion point of~$E$.
  
We claim that~$P_0=\Ocal$. To prove this claim, we assume
that~$P_0\ne\Ocal$ and derive a contradiction.  For an
arbitrary~$\a\in\Gcal_4\subseteq\Aut(f)$, we note that the
composition~$\a\circ\f^{-1}\circ\a^{-1}\circ\f$ is in~$\Aut(f)$. On the
other hand, we can write this composition explicitly as
\begin{align*}
  \a\circ\f^{-1}\circ\a^{-1}\circ\f
  &= \a \circ (\psi\circ T_{P_0})^{-1} \circ \a^{-1} \circ (\psi\circ T_{P_0}) \\
  &= \a \circ T_{-P_0} \circ \psi^{-1} \circ \a^{-1} \circ \psi\circ T_{P_0} \\  
  &= \a \circ T_{-P_0}  \circ \a^{-1} \circ \psi^{-1} \circ \psi\circ T_{P_0}
      \quad\text{from \eqref{eqn:psiaeqapsi},} \\
  &= T_{-\a(P_0)}  \circ  T_{P_0} \\
  &= T_{P_0-\a(P_0)}.
\end{align*}
Our assumption that~$P_0$ is a non-trivial $3$-torsion point implies
that $P_0\ne\a(P_0)$ for all $\a\in\Gcal_4\setminus\{1\}$, since for
such~$\a$, the kernel of~$\a-1$ consists of $2$-torsion points.
Hence the set
\[
  \bigl\{ P_0-\a(P_0) : \a\in\Gcal_4 \bigr\}
\]
contains four distinct elements of~$E[3]$; in particular, it contains
generators of~$E[3]$. We saw above that all of the
translations~$T_{P_0-\a(P_0)}$ are in~$\Aut(f)$, so using the fact
that~$\Aut(f)$ is a group, we have proven that
\[
  \Aut(f) \supset \bigl\{ T_Q : Q \in E[3] \bigr\}.
\]
Thus~$\Aut(f)$ contains a subgroup of type~$C_3\times C_3$. This and
the fact that~$f$ is a degree~$2$ morphism contradicts
Proposition~\ref{proposition:fCpCp},
which concludes the proof that~$P_0=\Ocal$.

We now know that every~$\f\in\Hom(f,f')$ has the form~$\f=\psi$ for
some~$\psi\in\Isom\bigl((E,\Ocal),(E',\Ocal')\bigr)$,
i.e.,~$\Hom(f,f')$ consists of the four maps listed earlier, so there
is an integer~$m$ such that
\[
  \f(X,Y,Z) = [u^2X,i^m uvY,(-1)^mZ],
\]
where we recall that~$u$ and~$v$ satisfy $u^4=a'/a$ and $v^2=e/e'$.
We compute
\begin{align*}
  f^\f = \f^{-1} f\circ \f(X,Y,Z) 
  &= [u^2 aX^2 + u^{-2} Z^2, u^2 XY,  u^2 v^2 Y^2 +  u^2 eXZ] \\
  &= \left[a' X^2 +  Z^2, \frac{a'}{a} XY,  \frac{a'e}{ae'} (Y^2 + e' XZ)\right].
\end{align*}
Comparing this to $f'=[a'X^2+Z^2,XY,Y^2+e'XZ]$, we see that $f^\f=f'$
if and only if $a=a'$ and $e=e'$, i.e., if and only if~$f'=f$.  This
completes the proof that
\[
\Hom(f,f') = \begin{cases}
  \emptyset &\text{if $f\ne f'$,}\\
  \Gcal_4 &\text{if $f=f'$,}\\
\end{cases}
\]
which completes proof of~(a) in the case that $ae\ne0$.

We next consider the case of maps with $a=0$, i.e., maps of the form
\[
  f = [Z^2,XY,Y^2+eXZ].
\]
These maps satisfy
\[
  I(f) = \bigl\{[1,0,0]\bigr\}
  \quad\text{and}\quad
  \Crit(f) = \{Z=0\} \cup \{eXZ=2Y^2\}.
\]
Letting~$f'=[Z^2,XY,Y^2+e'XZ]$ be another such map, we conclude that
any~$\f\in\Hom(f,f')$ fixes the point~$[1,0,0]$ and stabilizes the
line~$Z=0$. (This is true regardless of whether~$e=0$ and/or $e'=0$,
since~$\f$ preserves the multiplicities of components of~$\Crit(f)$.)
Thus~$\f$ has the form
$\f=\SmallMatrix{1&\a&\b\\ 0&\g&\d\\ 0&0&\e\\}$. Equating
\begin{align*}
  f\circ\f
  &= [ \e^2 Z^2, \g XY + \d XZ + \a\g Y^2 + (\a\d+\b\g) YZ + \b\d Z^2, \, *], \\
  \f\circ f'
  &= [Z^2 + \a X Y + \b Y^2 + \b e' X Z, \g X Y + \d Y^2 + \d e' X Z, \, * ],
\end{align*}
we see from the $XY$ and $Y^2$ terms in the first coordinate that $\a=\b=0$,
and then the $Y^2$ term in the second coordinate gives $\d=0$.
Hence~$\f$ is a diagonal matrix, and we have
\[
  [Z^2,XY,Y^2+e'XZ]
  = f' = f^\f
  = [\e^2 Z^2, XY, \g^2 \e^{-1} Y^2 + e XZ].
\]
Therefore~$\f\in\Hom(f,f')$ if and only if $e=e'$ and $\e^2=1$ and
$\g^2=\e$.  So if $f=f'$, then we get the four maps in~$\Gcal_4$, and if
$f\ne f'$, then~$f$ and~$f'$ are not conjugate.

It remains to consider the case of maps with $a\ne0$ and $e=0$, i.e.,
maps of the form
\[
  f = [aX^2 + Z^2, XY, Y^2]\quad\text{with $a\ne0$.}
\]
These maps satisfy
\[
  \Crit(f) = \{Z=0\}\cup\{Y^2=0\},
\]
i.e., the critical locus of~$f$ consists of two lines, one with
multiplicity~$1$ and one with multiplicity~$2$.  Letting~$f'=[a'X^2 +
  Z^2, XY, Y^2]$ be another such map, we conclude that
any~$\f\in\Hom(f,f')$ stabilizes the lines~$\{Z=0\}$ and $\{Y=0\}$, so
$\f$ has the form $\f=\SmallMatrix{\a&\b&\g\\ 0&\d&0\\ 0&0&1}$.
Equating the middle coordinates of
\begin{align*}
  f\circ\f
  &= [*\,, \a\d XY + \b\d Y^2 + \g\d YZ, \,*], \\
  \f\circ f'
  &= [*\,, \d XY,\, *],  
\end{align*}  
and using the fact that~$\d\ne0$ (since~$\f$ is invertible), we see
that $\b=\g=0$.  Hence~$\f$ is a diagonal matrix, and we have
\[
  [a' X^2 + Z^2, XY, Y^2]
  = f' = f^\f
  = [\a^2 a X^2 +  Z^2, \a^2 XY, \a\d^2 Y^2].
\]
Therefore~$\f\in\Hom(f,f')$ if and only if $a=a'$ and $\a^2=1$ and
$\d^2=\a^{-1}$.  So if $f=f'$, then we get the four maps in~$\Gcal_4$, and if
$f\ne f'$, then~$f$ and~$f'$ are not conjugate.

\par\vspace{10pt}\noindent
\textbf{Computation of $\Aut(f_{2,3})$ for $f_{2,3}=[YZ,X^2+cZ^2,XY]$}

To ease notation, we again drop the subscript on~$f$. We
consider two such maps
\[
  \text{$f=[YZ,X^2+cZ^2,XY]$ \quad\text{and}\quad $f'=[YZ,X^2+c'Z^2,XY]$,}
\]
and we compute $\Hom(f,f')$, where we suppose
that~$\Hom(f,f')\ne\emptyset$. Let $\f\in\Hom(f,f')$, and let
$\g=\sqrt{-c}$ and $\g'=\sqrt{-c'}$.

The indeterminacy locus of~$f$ consists of three points and the
critical locus of~$f$ consists of three lines (with multiplicity
if~$c=0$),
\begin{align*}
  I(f) &= \bigl\{ [0,1,0], [\pm\g,0,1] \bigr\}, \\
  \Crit(f) &= \{Y=0\} \cup \{X=\g Z\} \cup \{X = -\g Z\}.
\end{align*}
The orbit portrait of~$\Crit(f)$ is
\[
  \{X=\pm\g Z\} \xrightarrow{\;f\;} [1,0,\pm\g] \in \{Y=0\}
   \xrightarrow{\;f\;} [0,1,0] \in I(f),
\]
and similarly for~$f'$.  Suppose first that $c\ne\pm1$. Then
$[1,0,\pm\g]\notin I(f)$, so the fact that the map $\f\in\Hom(f,f')$
sends the orbit portrait of~$\Crit(f)$ to the orbit portrait
of~$\Crit(f')$ implies that~$\f$ fixes the line~$\{Y=0\}$ and the
point~$[0,1,0]$.  This means that~$\f$ has the form
\[
  \f=\SmallMatrix{s&0&t\\ 0&1&0\\ u&0&v\\}\in\PGL_3(K).
\]
We are assuming that~$f^\f=f'$.  We start by comparing the first and
third coordinates of~$f\circ\f$ and~$\f\circ f'$,
\begin{align*}
  f\circ\f &= [uXY+vYZ,\, *\,, sXY+tYZ],\\
  \f\circ f' &= [tXY+sYZ,\, *\,, vXY+uYZ].
\end{align*}
Thus there is an~$\e\in K^*$ such that $(u,v,s,t)=(\e t,\e s,\e v,\e
u)$, and this in turn implies that $u=\e t=\e^2u$ and $v=\e s=\e^2
v$. The invertibility of~$\f$ implies that~$u$ and~$v$ are not
both~$0$, so~$\e=\pm1$. Further, setting $u=\e t$ and $v=\e s$,
the middle coordinates of~$f\circ\f$ and~$\f\circ f'$ look like
\begin{align*}
  f\circ\f &= [*,\, (s^2+ct^2)X^2 + 2st(1+c)XZ + (t^2+cs^2)Z^2,\, *], \\
  \f\circ f' &= [*,\, X^2+c'Y^2,\, *].  
\end{align*}
Hence we must have
\[
  s^2 + ct^2 = \e,\quad st(1+c)=0,\quad t^2+cs^2 = \e c'.
\]
This leads to two cases (since we are assuming for the present that $c\ne-1$):
\begin{align*}
  s=0 &\quad\Longrightarrow\quad \e c^{-1}=t^2=\e c' \quad\Longrightarrow\quad cc'=1, \\
  t=0 &\quad\Longrightarrow\quad s^2=\e \quad\Longrightarrow\quad c=c'.
\end{align*}

We see that if~$f'\ne f$, i.e., $c'\ne c$, then we must have
$c'=c^{-1}$ and $s=v=0$, and in this case we find that the permutation
$[Z,Y,Z]\in N(\Gcal_4)$ is in~$\Hom(f,f')$.

If $c'=c$, i.e., we are computing~$\Aut(f)$, then our assumption that
$c^2\ne1$ means that we must have $t=u=0$ and $s^2=\e=\pm1$. This proves
that
\[
  \Aut(f) = \left\{ \SmallMatrix{s&0&0\\ 0&1&0\\ 0&0&\e s} \in \PGL_3(K): 
   \e=\pm1~\text{and}~s=\pm\sqrt{\e} \right\} = \Gcal_4.
\]

Next we consider the case that~$c=1$. This gives the map that is
labeled~$f_{4.2}$ in Example~\ref{example:notinj}.  It is shown in
that example that~$f_{4.2}$ is $\PGL_3$-conjugate to the map
$f_{6.1}:=[YZ,XZ,XY]$. We proved in
Proposition~\ref{proposition:classifyC2C2}(c) that
$\Aut(f_{6.1})=\Scal_3\Gcal_{2,2}\cong S_4$, and hence we find that
$\Aut(f_{4.2})\cong S_4$. Explicitly,~$\Aut(f_{4.2})$ is the subgroup
of~$\PGL_3$ given by conjugating $\Scal_3\Gcal_{2,2}$ by the inverse
of the map~$\b$ given in Example~\ref{example:notinj}.
  
Finally, if $c=-1$, then a similar calculation shows that~$s$ and~$t$
need only satisfy the single relation~$s^2-t^2=\e$, so
\[
  \Aut(f) \supseteq
  \left\{ \SmallMatrix{s&0&t\\ 0&1&0\\ \e t&0&\e s} \in \PGL_3(K):
     \begin{array}{@{}l@{}}
       \e=\pm1~\text{and}\\ s^2-t^2=\e\\ \end{array}
     \right\} \cong \GG_m\rtimes C_2.
\]
\end{proof}

\begin{proof}[Proof of Proposition~$\ref{proposition:C4notpossible}$]
Taking $\z=i$ to be a primitive 4'th root of unity, we see that the
map~$[X,iY,Z]$ is~$\t_0$. Then Table~\ref{table:effectoftm} with
$m=0$ and entries reduced modulo~$4$ is
{\small
\[
\begin{array}{|c||c|c|c|c|c|c|} \hline
  & X^2 & Y^2 & Z^2 & XY & XZ & YZ \\ \hline\hline
  {\text{$X$-coord}}
  & 0 & 2 & 0 & 1 & 0 & 1 \\ \hline
  {\text{$Y$-coord}}
  & 3 & 1 & 3 & 0 & 3 & 0 \\ \hline
  {\text{$Z$-coord}}
  & 0 & 2 & 0 & 1 & 0 & 1 \\ \hline
\end{array}
\]
}%
Hence~$\t_2\in\Aut(f)$ leads to the following four families of maps:
\begin{align*}
  f_{0,0} &:= [aX^2+bZ^2+cXZ,dXY+eYZ,fX^2+gZ^2+hXZ], \\
  f_{0,1} &:= [aXY+bYZ,cY^2,dXY+eYZ], \\
  f_{0,2} &:= [aY^,0,bY^2], \\
  f_{0,3} &:= [0,aX^2+bZ^2+cXZ].
\end{align*}
For each family we use Table~\ref{table:effectofL} to compute
\begin{align*}
  \mu^{\Ocal(1)}(f_{0,0},L_{k,\ell})
  & \le \max\{-k,3k+2\ell,k+\ell,-3k-\ell\} \\
  &\omit\hfill$\xrightarrow{(k,\ell)=(1,-2)} -1$, \\
  \mu^{\Ocal(1)}(f_{0,1},L_{k,\ell})
  & \le \max\{-\ell,2k,-\ell,-2k-2\ell,-k\}
  \xrightarrow{(k,\ell)=(0,1)} -1,\\
  \mu^{\Ocal(1)}(f_{0,2},L_{k,\ell})
  & \le \max\{k-2\ell,-k-3\ell\}
  \xrightarrow{(k,\ell)=(0,1)} -3,\\
  \mu^{\Ocal(1)}(f_{0,3},L_{k,\ell})
  & \le \max\{-3k-\ell,k+\ell,-k\}
  \xrightarrow{(k,\ell)=(1,-2)} -1.
\end{align*}
This shows that all of these maps are $\Dcal$-unstable.
\end{proof}

We now investigate more closely one of the families of maps appearing
in Proposition~\ref{proposition:autc4types}.

\begin{proposition}
\label{proposition:c4f2autgm}
Let $f=[YZ,X^2+cZ^2,XY]$ be the map from
Proposition~\textup{\ref{proposition:autc4types}(b)}.  Let $R(X,Z)=X^2+cZ^2$
and $S(X,Z)=cX^2+Z^2$. Then the iterates of~$f$ are given by the
explicit formulas
\begin{align*}
  f^{2k}(X,Y,Z) &= \bigl[R(X,Y)^k X, S(X,Y)^k Y, R(X,Y)^k Z\bigr], \\
  f^{2k+1}(X,Y,Z) &= \bigl[S(X,Y)^k YZ, R(X,Y)^{k+1}, S(X,Y)^k XY\bigr].
\end{align*}
In particular, if $c\ne\pm1$, then $\deg(f^n)=2n+1$, while if
$c=\pm1$, then $f^{2k}=[X,(-1)^kY,Z]$.

Let $p(X,Y,Z)=[X,Z]$. Then there is a commutative diagram
\[
\begin{CD}
  \PP^2 @>f>> \PP^2 \\
  @VVpV @VVpV \\
  \PP^1 @>[U,V]\to[V,U]>> \PP^1\\
\end{CD}
\]
The second iterate $f^2=[X^3+cXZ^2,cX^2Y+YZ^2,X^2Z+cZ^3]$ has infinte
automorphism group,
\[
  \Aut(f^2) \supset \left\{\SmallMatrix{1&0&0\\0&t&0\\0&0&1\\} : t\in\GG_m\right\}.
\]
\end{proposition}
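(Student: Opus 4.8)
The plan is to establish the explicit formulas for the iterates $f^{n}$ by a two-step induction on $n$, and then to read off everything else as a short consequence. Write $R=X^{2}+cZ^{2}$ and $S=cX^{2}+Z^{2}$ for the two quadratic forms appearing in the statement (both functions of $X$ and $Z$ only), so that $f=[YZ,R,XY]$ and $f^{0}=[X,Y,Z]$ are the asserted formulas in the case $k=0$. For the inductive step I would compute $f^{n+1}=f\circ f^{n}$ separately according to the parity of $n$. If $f^{2k}=[R^{k}X,S^{k}Y,R^{k}Z]$, substituting into $f=[YZ,X^{2}+cZ^{2},XY]$ gives
\[
  f^{2k+1}=\bigl[(S^{k}Y)(R^{k}Z),\,(R^{k}X)^{2}+c(R^{k}Z)^{2},\,(R^{k}X)(S^{k}Y)\bigr]
  =\bigl[R^{k}S^{k}\,YZ,\;R^{2k+1},\;R^{k}S^{k}\,XY\bigr],
\]
and cancelling the common factor $R^{k}$ produces $[S^{k}YZ,\,R^{k+1},\,S^{k}XY]$. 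If instead $f^{2k+1}=[S^{k}YZ,R^{k+1},S^{k}XY]$, substituting into $f$ gives
\[
  f^{2k+2}=\bigl[R^{k+1}S^{k}\,XY,\;(S^{k}YZ)^{2}+c(S^{k}XY)^{2},\;R^{k+1}S^{k}\,YZ\bigr]
  =\bigl[R^{k+1}S^{k}\,XY,\;S^{2k}Y^{2}(Z^{2}+cX^{2}),\;R^{k+1}S^{k}\,YZ\bigr],
\]
and the one identity that makes the induction close is $Z^{2}+cX^{2}=S$; cancelling the common factor $S^{k}Y$ yields $[R^{k+1}X,\,S^{k+1}Y,\,R^{k+1}Z]$, which is the asserted formula with $k$ replaced by $k+1$.

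From the formulas one reads off $\deg f^{2k}\le 2k+1$ and $\deg f^{2k+1}\le 2k+2$, so $\deg f^{n}\le n+1$; the remaining point is whether the three coordinate forms have a common factor. Neither $R$ nor $S$ is divisible by $Y$, and $X\nmid S$, $Z\nmid S$ since $X$ and $Z$ are coprime. When $c\ne\pm1$ the forms $R$ and $S$ are coprime, because a common zero would force $X^{2}=-cZ^{2}$ and $Z^{2}=-cX^{2}$, hence $c^{2}=1$; so the forms $R^{k}X,\,S^{k}Y,\,R^{k}Z$ have no common factor, and likewise $S^{k}YZ,\,R^{k+1},\,S^{k}XY$ have none, giving $\deg f^{n}=n+1$. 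When $c=\pm1$ one has $S=cR$ ($S=R$ if $c=1$, $S=-R$ if $c=-1$), so the middle coordinate of $f^{2k}$ is $c^{k}R^{k}Y$; cancelling $R^{k}$ gives $f^{2k}=[X,c^{k}Y,Z]$, the identity map when $c=1$ and of order $2$ when $c=-1$.

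For the commutative diagram, with $p=[X,Z]$ one has $p\circ f=[YZ,XY]=[Z,X]$, which is exactly the involution $[U,V]\mapsto[V,U]$ applied to $p=[X,Z]$; as $p$ and $f$ are rational maps this verifies the square. Finally, specializing the formula for $f^{2k}$ to $k=1$ gives $f^{2}=[RX,SY,RZ]=[X^{3}+cXZ^{2},\,cX^{2}Y+YZ^{2},\,X^{2}Z+cZ^{3}]$; since $R$ and $S$ involve only $X,Z$ while $Y$ occurs linearly and only in the middle coordinate, conjugating by $\f_{t}=[X,tY,Z]$ multiplies the middle coordinate of $f^{2}$ by $t$ and then $\f_{t}^{-1}$ divides it by $t$, so $(f^{2})^{\f_{t}}=f^{2}$ for every $t\in\GG_{m}$, which exhibits the asserted copy of $\GG_{m}$ inside $\Aut(f^{2})$.

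I do not anticipate a serious obstacle: the real content is the bookkeeping in the two-step induction of the first paragraph — tracking which power of $R$ or $S$ cancels at each stage — together with the small coprimality argument for $R$ and $S$ in the second paragraph, which is what upgrades the bound $\deg f^{n}\le n+1$ to an equality.
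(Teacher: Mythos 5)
Your proof is correct and follows essentially the same route as the paper's: the identical two-step induction on $k$ (with the identity $Z^2+cX^2=S$ closing the even step), a coprimality argument for $R$ and $S$ that is equivalent to the paper's computation $\Resultant(R,S)=(c^2-1)^2$, and the same direct verifications of the semiconjugacy via $p=[X,Z]$ and of the $\GG_m$ inside $\Aut(f^2)$. Your conclusions $\deg(f^n)=n+1$ and $f^{2k}=[X,c^kY,Z]$ for $c=\pm1$ agree with the paper's own proof (and with the remark in the introduction); the ``$2n+1$'' and ``$(-1)^k$'' appearing in the proposition statement are typos.
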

\begin{proof}
We need to prove that
\begin{align*}
  f^{2k}(X,Y,Z) &= \bigl[R^k X, S^k Y, R^k Z\bigr], \\
  f^{2k+1}(X,Y,Z) &= \bigl[S^k YZ, R^{k+1}, S^k XY\bigr].
\end{align*}
The proof is by induction on~$k$. The formulas are visibly correct for
$k=0$. Assuming that the formula for~$f^{2k}$ is correct, we compute
\begin{align*}
  f^{2k+1}(X,Y,Z) &= f\bigl(f^{2k}(X,Y,Z)\bigr) \\
  &= f(R^kX,S^kY,R^kZ) \\
  &= [R^kS^kYZ, R^{2k}X^2+cR^{2k}Z^2, R^kS^k XY] \\
  &= [S^k YZ, R^{k+1}, S^k XY], 
\end{align*}  
which shows that the formula for~$f^{2k+1}$ is correct.
Similarly, assuming that the formula for~$f^{2k+1}$ is correct, we compute
\begin{align*}
  f^{2k+2}(X,Y,Z) &= f\bigl(f^{2k+1}(X,Y,Z)\bigr) \\
  &= f(S^k YZ, R^{k+1}, S^k XY) \\
  &= [R^{k+1}S^kXY, S^{2k}Y^2Z^2+cS^{2k}X^2Y^2, R^{k+1}S^kYZ ] \\
  &= [R^{k+1}X, S^{k+1}Y , R^{k+1}Z ],
\end{align*}
which shows that the formula for~$f^{2k+2}$ is correct. This completes the proof
of the formulas for the iterates of~$f$.

We now ask when the coordinates of~$f^n$ have a common factor. It is clear that
neither~$X$ nor~$Y$ nor~$Z$ is a common factor, so any non-trivial common factor must
be a common factor of~$R$ and~$S$. Since
\[
  \Resultant(R,S)=\Resultant(X^2+cZ^2,cX^2+Z^2) = (c^2-1)^2,
\]
there is no common factor if~$c\ne\pm1$. Hence if~$c\ne\pm1$, then~$\deg(f^n)=n+1$.
On the other hand, if~$c=\pm1$, then $S=\pm R$, so $f^{2k}=[X,(-1)^kY,Z]$.

Finally, the commutativity of the diagram and verification that the
indicated matrices are in~$\Aut(f^2)$ are trivial calculations. This
completes the proof of Proposition~\ref{proposition:c4f2autgm}.
\end{proof}

\section{Maps with an Automorphism of Order $3$}
\label{section:autC3}
In this section we classify maps in~$\Rat_2^2$ that admit an
automorphism of order~$3$. The classification that we give in
Table~\ref{table:autord3} arises naturally during the proof, but we
note that later in Section~\ref{section:endproof} we will use somewhat
different normal forms in order to create the families as described in
Table~\ref{table:maintable}.

\begin{proposition}
\label{proposition:autc3types}
Let $K$ be an algebraically closed field of characteristic~$0$, let
$\Gcal_3$ be the group described in Theorem~$\ref{theorem:main1}$, and
let $f\in\Rat_2^2(\Gcal_3)^\ss$ be a dominant map of degree~$2$ with
finite automorphism group.  Further define a subgroup
$\Gcal_{3,2}\subset\PGL_3(K)$ by
\[
  \Gcal_{3,2} := \left\<\SmallMatrix{1&0&0\\0&\z&0\\0&0&\smash[t]{\z^2}\\},
  \SmallMatrix{1&0&0\\0&0&1\\0&1&0\\} \right\> \cong S_3.
\]
Then~$f$ is $N(\Gcal_3)$-conjugate to one of the maps in
Table~$\ref{table:autord3}$, where the penultimate column indicates if
the given maps are morphisms and the last column describes when two
maps of the given form are $N(\Gcal_3)$-conjugate to one
another.\footnote{In some cases  we have given only the
  isomorphism class of~$\Aut(f)$. But during the proof of the
  proposition, we give an explicit description of~$\Aut(f)$ as a
  subgroup of~$\PGL_3(K)$.}
Further, maps of Type~$\boldsymbol C_3(n)$ and~$\boldsymbol C_3(n')$
for $n\ne n'$ are not~$N(\Gcal_3)$-conjugate.
\end{proposition}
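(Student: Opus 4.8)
The plan is to adapt the method of Propositions~\ref{proposition:classifyC2C2} and~\ref{proposition:autc4types}, the new feature being that the $\Gcal_3$-invariant families are considerably larger. Write $\z=\z_3$, so $\Gcal_3=\<\t_2\>$ with $\t_2=(X,\z Y,\z^2 Z)$, and read off Table~\ref{table:effectoftm} with $m=2$ (reduced mod~$3$): the $\Gcal_3$-invariant maps fall into three families $f_{2,0},f_{2,1},f_{2,2}$ according to the character $\e$ by which the affine lift transforms under~$\t_2$, for instance
\[
  f_{2,0}=[aX^2+bYZ,\;cZ^2+dXY,\;eY^2+gXZ].
\]
By Lemma~\ref{lemma:subgpnlzr}, $N(\Gcal_3)=\Scal_3\Dcal$. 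Conjugating by a $3$-cycle shifts $\e$ by $\pm1$ — the shift, rather than no change, arises because $\pi$ commutes with $\t_2$ only in $\PGL_3$, so on $\GL_3$-lifts there is a stray scalar~$\z$ which the degree-$2$ map~$\hat f$ squares — and conjugating by a transposition acts on $\e$ by an affine involution; these together generate the full affine group of $\ZZ/3\ZZ$ acting on $\e$, which is transitive, so all three families are $N(\Gcal_3)$-conjugate and it suffices to treat $f_{2,0}$. Moreover, among the permutations only $1$ and the transposition $(Y,Z)\mapsto(Z,Y)$ carry $f_{2,0}$ to itself, so the residual $N(\Gcal_3)$-equivalences among normal forms of this shape are realized by diagonal matrices together with this one transposition.

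Next I would decide, for each stratum of $f_{2,0}$ cut out by a vanishing pattern of $\{a,b,c,d,e,g\}$: which maps are dominant (and which are morphisms, by checking whether the three coordinate forms have a common zero), and which are $N(\Gcal_3)$-semistable, the latter by computing $\mu^{\Ocal(1)}(f_{2,0},L_{k,\ell})$ from Table~\ref{table:effectofL} and testing it against well-chosen $(k,\ell)$ as in Section~\ref{section:diagstability}. On each surviving stratum a diagonal conjugation scaling three nonzero coefficients to~$1$ yields the normal forms of Table~\ref{table:autord3}; the open stratum $ace\ne0$ gives the three-parameter family $[X^2+bYZ,Z^2+dXY,Y^2+gXZ]$, on which $(Y,Z)$ acts by $f_{b,d,g}\mapsto f_{b,g,d}$, and the lower-dimensional strata give the remaining families together with their residual equivalences, found the same way.

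The bulk of the work is computing $\Aut(f)$, and more generally $\Hom(f,f')$, for each normal form, using that every $\f\in\Hom(f,f')$ induces isomorphisms $I(f)\xrightarrow{\sim}I(f')$ and $\Crit(f)\xrightarrow{\sim}\Crit(f')$ respecting multiplicities. When $\Crit(f)$ is a union of lines, or a conic and a line, or when $I(f)$ is a nonempty finite set, $\f$ permutes a finite configuration and one reduces to a short $3\times3$ matrix computation; on certain strata a transposition (perhaps composed with a diagonal matrix) normalizing $\Gcal_3$ is then an automorphism, giving $\Aut(f)\cong\Gcal_{3,2}\cong S_3$. The hard cases are those with $I(f)=\emptyset$ and $\Crit(f)$ a smooth cubic — Types~1.5 and~1.8 of Table~\ref{table:maintable} — where one instead uses that $\f$ permutes the nine flexes of $\Crit(f)$, reducing $\f$ to a long but finite list of candidates to be checked by computer; I expect this to be the principal obstacle. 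Two exceptional members, the map with coefficients $(2,2\z,2\z^2)$, which is $\PGL_3$-conjugate to $[Z^2,X^2,Y^2]$, and $[X^2-YZ,Z^2-XY,Y^2-XZ]$, which is $\PGL_3$-conjugate to $[YZ,XZ,XY]$, have larger automorphism group, $C_7\rtimes C_3$ and $S_4$ respectively; these are handled by exhibiting the extra automorphisms and quoting Propositions~\ref{proposition:C5C7etc} and~\ref{proposition:classifyC2C2}, while Proposition~\ref{proposition:fCpCp} with $p=3$ is used throughout to exclude a copy of $C_3\times C_3$ in $\Aut(f)$.

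Finally, the assertion that maps of Type~$\boldsymbol C_3(n)$ and $\boldsymbol C_3(n')$ with $n\ne n'$ are never $N(\Gcal_3)$-conjugate is built into the classification: each such type is, by construction, the dominant $N(\Gcal_3)$-semistable part of a single stratum of $f_{2,0}$, a vanishing pattern of the coefficients; since the subgroup of $N(\Gcal_3)$ preserving the family $f_{2,0}$ is generated by diagonal matrices, $\Gcal_3$, and the transposition $(Y,Z)$ — all of which preserve, or at worst permute among themselves, these strata — distinct types lie in distinct strata-orbits and cannot be conjugate. For any two normal forms inside the same stratum I would exhibit a conjugation invariant — the cardinality and configuration of $I(f)$, the isomorphism type of $\Crit(f)$, or, in the elliptic case, the $j$-invariant of $\Crit(f)$ — separating the listed parameters up to exactly the equivalences recorded in the last column of Table~\ref{table:autord3}, so that $n\mapsto(\text{$N(\Gcal_3)$-conjugacy class})$ is injective.
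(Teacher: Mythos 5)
Your plan follows the paper's proof essentially step for step: reduce to the single family $[aX^2+bYZ,\,cZ^2+dXY,\,eY^2+gXZ]$ using $N(\Gcal_3)=\Scal_3\Dcal$, stratify by vanishing patterns of the coefficients, test semistability with the one-parameter subgroups $L_{k,\ell}$, normalize by diagonal conjugation, compute $\Hom(f,f')$ from the induced isomorphisms of $I(f)$ and $\Crit(f)$, and treat the smooth-cubic stratum via the nine flex points and a computer check, identifying the exceptional $S_4$ and $C_7\rtimes C_3$ maps by explicit conjugation to $[YZ,XZ,XY]$ and $[Z^2,X^2,Y^2]$. The only cosmetic differences are your strata-orbit framing of the final non-conjugacy claim (the paper compares the geometry of $I(f)$ and $\Crit(f)$ directly and resolves the one ambiguous pair $C_3(2)$ versus $C_3(3)$ by a dynamical argument on $f(\Crit(f))$) and your passing appeal to Proposition~\ref{proposition:fCpCp}, which the paper actually invokes only in the parallel order-$4$ analysis.
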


\begin{proposition}
\label{proposition:C3notpossible}
Let~$K$ be an algebraically closed field of characteristic~$0$,
let~$\z$ be a primitive cube root of unity, and let $f\in\Rat_2^2$
be a dominant rational map satisfying
\[
  \Aut(f) \supseteq \left\< \SmallMatrix{1&0&0\\0&\z&0\\0&0&1\\} \right\>.
\]
Then~$f$ is~$\Dcal$-unstable.
\end{proposition}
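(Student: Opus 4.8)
The plan is to run the argument of the proof of Proposition~\ref{proposition:C4notpossible} with the order-$3$ element in place of the order-$4$ one. Fix a primitive cube root of unity~$\z$, so that $\SmallMatrix{1&0&0\\0&\z&0\\0&0&1\\}$ is the map~$\t_0$ of Section~\ref{section:diagstability}. Since~$\t_0$ has order~$3$ in~$\PGL_3(K)$, the hypothesis $\t_0\in\Aut(f)$ means $\hat f^{\t_0}=\z^\e\hat f$ for some $\e\in\{0,1,2\}$, and reducing the $m=0$ entries of Table~\ref{table:effectoftm} modulo~$3$ shows that this forces~$f$ into one of the three monomial families
\begin{align*}
  f_{0,0} &= [aX^2+bZ^2+cXZ,\,dXY+eYZ,\,gX^2+hZ^2+jXZ], \\
  f_{0,1} &= [aXY+bYZ,\,cY^2,\,dXY+eYZ], \\
  f_{0,2} &= [aY^2,\,bX^2+cZ^2+dXZ,\,eY^2].
\end{align*}
First I would discard~$f_{0,2}$: its first and third coordinates are both multiples of~$Y^2$, so its image is contained in a line of~$\PP^2$ and it is never dominant; hence it is ruled out by the dominance hypothesis.

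For the two remaining families I would apply the Hilbert--Mumford criterion with a single one-parameter subgroup~$L_{k,\ell}$ of~$\Dcal$ in each case. A glance at Table~\ref{table:effectofL} shows that under~$L_{1,-2}$ every monomial that can occur in~$f_{0,0}$ has coefficient scaled by~$t^{1}$, and that under~$L_{-1,2}$ every monomial coefficient of~$f_{0,1}$ is scaled by~$t^{2}$; equivalently, each of these one-parameter subgroups acts on the relevant family purely by scaling with a strictly positive exponent. Therefore
\[
  \mu^{\Ocal(1)}(f_{0,0},L_{1,-2})=-1<0
  \qquad\text{and}\qquad
  \mu^{\Ocal(1)}(f_{0,1},L_{-1,2})=-2<0,
\]
and these bounds persist a fortiori for any individual member of either family, no matter which of the listed monomials actually appear. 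Thus~$f_{0,0}$ and~$f_{0,1}$ are~$\Dcal$-unstable, which together with the elimination of~$f_{0,2}$ completes the proof.

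I do not expect a genuine obstacle here, but there is one point worth emphasizing. Unlike~$f_{0,0}$ and~$f_{0,1}$, the generic member of the family~$f_{0,2}$ --- the one with all three monomials~$X^2,Z^2,XZ$ present in the middle coordinate --- is actually~$\Dcal$-\emph{semistable}: no choice of~$L_{k,\ell}$ makes all the relevant weights from Table~\ref{table:effectofL} simultaneously positive. So~$f_{0,2}$ cannot be eliminated by a numerical computation and really must be removed using dominance. Beyond that, the proof is a short, mechanical inspection of Tables~\ref{table:effectoftm} and~\ref{table:effectofL}, exactly parallel to Proposition~\ref{proposition:C4notpossible}.
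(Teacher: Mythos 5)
Your proof is correct and follows essentially the same route as the paper's: the same three families read off from Table~\ref{table:effectoftm} with $m=0$ reduced modulo~$3$, elimination of $f_{0,2}$ because its image lies in a line, and a single destabilizing one-parameter subgroup for each of the other two families. One small remark: your choice $L_{-1,2}$ for $f_{0,1}$ is in fact a valid destabilizer (all weights equal $+2$), whereas the pair $(k,\ell)=(1,1)$ printed in the paper does not work once the weight of the $YZ$ monomial in the first coordinate is taken with its correct sign $2k$ rather than $-2k$ --- a sign typo in the paper that your computation quietly corrects.
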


\begin{table}
{\small
\[
\begin{array}{|c|c|c|c|c|c|} \hline
  \hspace{.5em}f\hspace{.5em} & \text{Coeffs} & \Aut(f) & \text{Mor?} & f' \sim f \\ \hline\hline
  \multicolumn{5}{|l|}{\boldsymbol{C_3(1)}:\;f_b = [X^2+bYZ,Z^2,Y^2],\quad b\in K}  \\ \cline{2-5}
  &  & \Gcal_{3,2} & \text{Yes} & b'=b \\ \hline
  \multicolumn{5}{|l|}{\boldsymbol{C_3(2)}:\;f_{a,b} = [aX^2+YZ,XY,Y^2+gXZ],\quad a,g\in K~\text{not both $0$}}  \\ \cline{2-5}
  &  & \Gcal_3 & \text{No} & - \\ \hline\hline
  \multicolumn{5}{|l|}{\boldsymbol{C_3(3)}:\;f_b = [bYZ,Z^2+XY,Y^2],\quad b\in K^*}  \\ \cline{2-5}
  &   & \Gcal_3  & \text{No} & - \\ \hline\hline
  \multicolumn{5}{|l|}{\boldsymbol{C_3(4)}:\;f_{b,g} = [bYZ,Z^2+XY,Y^2+gXZ],\quad b,g\in K^*}  \\ \cline{2-5}
  & g\ne1 & \Gcal_3 & \text{No} & (b',g')=(bg,g^{-1}) \\
  & g = 1 & \Gcal_{3,2} & \text{No} & - \\ \hline\hline
  \multicolumn{5}{|l|}{\boldsymbol{C_3(5)}:\;f_{a,b} = [aX^2-aYZ,Z^2-XY,bY^2-bXZ],\quad a,b\in K^*}  \\ \cline{2-5}
  & a\ne1~\text{and}~b\ne 1 & \cong C_3 & \text{No} & (a',b')=(b,a) \\
  & \text{exactly one of $a,b=1$}&\cong C_3\rtimes C_2  & \text{No} &  (a',b')=(b,a) \\
  & a=b=1 &\cong S_4  & \text{No} & - \\   \hline\hline
  \multicolumn{5}{|l|}{\boldsymbol{C_3(6)}:\;f_{a,b} := [aX^2+bYZ,Z^2+XY,Y^2],\quad a,b\in K^*}  \\ \cline{2-5}
  &  & \Gcal_3 & \text{Yes} & - \\ \hline\hline
  \multicolumn{5}{|l|}{\boldsymbol{C_3(7)}:\;f_{b,d,g} = [X^2+bYZ,Z^2+dXY,Y^2+gXZ],} \\
  \multicolumn{5}{|r|}{b,d,g\in K^*,\; bdg\ne-1,8 }  \\ \cline{2-5}
  & g\ne d& \Gcal_3 & \text{Yes} & -\\
  & g = d&  \Gcal_{3,2} & \text{Yes} & -\\   \hline\hline
  \multicolumn{5}{|l|}{\boldsymbol{C_3(8)}:\;f_{c,e} = [X^2+2YZ,cZ^2+2cXY,eY^2+2eXZ],\quad c,e\in K^*}  \\ \cline{2-5}
  & (c,e)\ne(\z_3,\z_3^2)~\text{and}~(\z_3^2,\z_3) & \Gcal_3 & \text{Yes} & (c',e')=(e,c) \\
  & (c,e)=(\z_3,\z_3^2)~\text{or}~(\z_3^2,\z_3) & \cong C_7\rtimes C_3 & \text{Yes} & (c',e')=(e,c) \\ \hline
\end{array}
\]
}
\caption{Dominant degree $2$ maps $f\in\Rat_2^2(\Gcal_3)^\ss$}
\label{table:autord3}
\end{table}

\begin{proof}[Proof of Proposition $\ref{proposition:autc3types}$]
During the proof we will frequently use the fact
that~$N(\Gcal_3)=\Scal_3\Dcal$; see Lemma~\ref{lemma:subgpnlzr}.
The map $[X,\z Y,\z^2 Z]$ generating~$\Gcal_3$ is the map defined by the matrix~$\t_2$ in
Section~\ref{section:diagstability}. Using Table~\ref{table:effectoftm}
with $m=2$ and entries reduced modulo~$3$, we find that
{\small
\[
\begin{array}{|c||c|c|c|c|c|c|} \hline
  & X^2 & Y^2 & Z^2 & XY & XZ & YZ \\ \hline\hline
  {\text{$X$-coord}}
  & 0 & 2 & 1 & 1 & 2 & 0 \\ \hline
  {\text{$Y$-coord}}
  & 2 & 1 & 0 & 0 & 1 & 2 \\ \hline
  {\text{$Z$-coord}}
  & 1 & 0 & 2 & 2 & 0 & 1 \\ \hline
\end{array}
\]
}%
Hence assuming that~$\t_2\in\Aut(f)$ leads to the following three families of maps:
\begin{align*}
  f &:= [aX^2+bYZ,cZ^2+dXY,eY^2+gXZ], \\
  f' &:= [aZ^2+bXY,cY^2+dXZ,eX^2+gYZ], \\
  f'' &:= [aY^2+bXZ,cX^2+dXZ,eZ^2+gXY].
\end{align*}

Conjugating by the cyclic permutation~$\pi(X,Y,Z)=[Y,Z,X]\in
N(\Gcal_3)$ gives
\begin{align*}
  f^\pi &= [eZ^2+gXY,aY^2+bXZ,cX^2+dYZ],\\
  f^{\pi^2} &= [cY^2+dXZ,eX^2+gYZ,aZ^2+bXY],
\end{align*}
which shows that our three families are $N(\Gcal_3)$-conjugates. It
thus suffices to analyze one of them, so we concentrate on
\begin{equation}
  \label{eqn:fXYZaX2bYZ}
  f(X,Y,Z) = [aX^2+bYZ,cZ^2+dXY,eY^2+gXZ].
\end{equation}

Using Table~\ref{table:effectofL}, we see that
\[
\mu^{\Ocal(1)}(f,L_{k,\ell})
= \max\bigl\{
\overbrace{-k}^{a,d,g\ne0},
\overbrace{2k}^{b\ne0},
\overbrace{2k+3\ell}^{c\ne0},
\overbrace{-k-3\ell}^{e\ne0}
\bigr\},
\]
where $-k$ appears in the max if one or more of $a,d,g$ is non-zero.

In Table~\ref{table:ssscondC3}, maps marked as being semi-stable are not
stable.  Also, in the column marked~$a,d,g$, the symbol~$\ne0$ means
that at least one of~$a,d,g$ is non-zero, while~$0$ means that all
three values are~$0$.

\begin{table}[ht]
\[
\begin{array}{|c||c|c|c|c|c|c|}\hline
  \text{Case} & a,d,g & b & c & e & \text{stability} \\ \hline\hline
  1 & \ne0 & * & \ne0 & \ne0 & \text{stable} \\ \hline
  2 & \ne0 & \ne0 & \ne0 & 0 & \text{semi-stable} \\ \hline
  3 & \ne0 & \ne0 & 0 & \ne0 & \text{semi-stable} \\ \hline
  4 & \ne0 & \ne0 & 0 & 0 & \text{semi-stable} \\ \hline  
  5 & 0  & * & * & * & \text{unstable} \\ \hline
\end{array}
\]
\caption{Semi-stability and stability conditions}
\label{table:ssscondC3}
\end{table}

To justify our assertion that the maps in Case~1 are stable, we use the identity
\[
  (-k) + (2k+3\ell) + (-k-3\ell) = 0,
\]
which shows that
\[
  \mu^{\Ocal(1)}(f,L_{k,\ell})
  =\max\{-k,2k+3\ell,-k-3\ell\}>0
  \quad\text{for all $(k,\ell)\ne(0,0)$.}
\]
For Cases~2,~3 and~4 it is clear that
$\mu^{\Ocal(1)}\ge0$ due to the~$-k$ and~$2k$ terms in the max, but
taking $k=0$ and $\ell=\pm1$ gives a non-zero $(k,\ell)$
pair with $\mu^{\Ocal(1)}(f,L_{k,\ell})=0$. Hence these cases give maps that
are semi-stable, but not stable. Finally, in Case~5 we have
\[
  \mu^{\Ocal(1)}(f,L_{k,\ell})
  \le \max\{2k,2k+3\ell,-k-3\ell\}
  \xrightarrow{(k,\ell)=(-2,1)} -1,
\]
which shows that these maps are unstable.  (We also remark that the
Case~5 map $f = [bYZ,cZ^2,eY^2]$ is not dominant, since its image is
contained in the conic $b^2YZ=ceX^2$.)

Conjugating~\eqref{eqn:fXYZaX2bYZ} by~$\s=[uX,vY,wZ]\in N(\Gcal_3)$
yields the twist
\begin{equation}
  \label{eqn:f3twist}
  f^\s = \left[u aX^2+\frac{vw}{u} bYZ,\frac{w^2}{uv} cZ^2+u dXY,
    \frac{v^2}{uw} eY^2+u gXZ\right].
\end{equation}

If $e=0$ and $c\ne0$, we can use the permutation $[X,Z,Y]\in
N(\Gcal_3)$ that swaps $c$ and $e$.  The remainder of the proof is a
case-by-case analysis that depends on properties of the coefficients.

\par\noindent\framebox{$\boldsymbol{c=e=0}$}
\par
The map
\[
  f(X,Y,Z) = [aX^2+bYZ,dXY,gXZ]
\]
has the property that $[X,tY,t^{-1}Z]\in\Aut(f)$ for every~$t$, so~$\Aut(f)$
contains a copy of~$\GG_m$.

\par\noindent\framebox{$\boldsymbol{c=0}$ and $\boldsymbol{e\ne 0}$}
\par
We have
\[
  f(X,Y,Z) = [aX^2+bYZ,dXY,eY^2+gXZ].
\]
The dominance of $f$ implies that $d\ne0$,
and the semi-stability of~$f$ implies that~$b\ne0$.
Using the fact that $bde\ne0$, we see from the twisting
formula~\eqref{eqn:f3twist} that an appropriate twist
lets us take $b=d=e=1$.
So
\[
  f(X,Y,Z) = [aX^2+YZ,XY,Y^2+gXZ].
\]
The indeterminacy locus is
\[
  I(f)=\begin{cases}
  \bigl\{[0,0,1]\bigr\}&\text{if $a\ne0$,} \\
  \bigl\{[0,0,1],[1,0,0]\bigr\}&\text{if $a=0$.} \\
  \end{cases}
\]

Suppose that $\f\in\Hom(f,f')$ fixes $[0,0,1]$, which is forced if~$a\ne0$,
and is one of two possibilities if $a=0$.
Thus $\f=\SmallMatrix{\a&\b&0\\ \g&\d&0\\ \l&\mu&1\\}$.
Comparing
\begin{align*}
  \f\circ f' &= [*,\, \g(a'X^2+YZ) + \d XY,\,*],\\
  f\circ\f &= [*,\, (\a X+\b Y)(\g X + \d Y),\, *],
\end{align*}
we see that $\g=0$ because $f\circ\f$ has no $YZ$ term, and that
$\b\d=0$ because $\f\circ f'$ has no $Y^2$ term.  But $\g=\d=0$
contradicts the invertibility of~$\f$, so $\b=0$.  Hence
$\f=\SmallMatrix{\a&0&0\\ 0&\d&0\\ \l&\mu&1\\}$. Next we look at
the first coordinates,
\begin{align*}
  \f\circ f' &= [\a(a'X^2+YZ),\,*,\,*],\\
  f\circ\f &= [a(\a X)^2 + (\d Y)(\l X + \mu Y + Z),\,*,\,*].
\end{align*}
Since $\d\ne0$, the lack of an $XY$ term gives $\l=0$ and the lack of
a $Y^2$ term gives $\mu=0$. Hence~$\f$ is diagonal. Then
\[
  f' = f^\f = [\a^2 aX^2 + \d YZ, \a^2 XY, \a\d^2 Y^2 + \a^2 g XZ].
\]
Normalizing on the $XY$ term, this formula holds if and only if
\[
  (a,\a^{-2}\d,\a^{-1}\d^2,g) = (a',1,1,g').
\]
So if and only if $f'=f$ and $\d=\a^2$ and $\d^2=\a$. Hence~$\a$ is
a cube root of unity and $\d=\a^2$, which gives the copy of~$\Gcal_3$
that we already know is in~$\Aut(f)$.

Next suppose that $a=0$ and that $\f\in\Hom(f,f')$ swaps $[0,0,1]$ and
$[1,0,0]$. Then $f=[YZ,XY,Y^2+gXZ]$ and
$\f=\SmallMatrix{0&\a&\b\\0&1&0\\\g&\d&0\\}$, and we have
\[
\f\circ f' = [*,\, XY,\, *],
\quad
f\circ\f = [*,\, (\a Y+\b Z)Y,\, *].
\]
This gives a contradiction, so even in the case that~$a=0$, we obtain
no new elements. 

\par\noindent\framebox{$\boldsymbol{ce\ne0}$}
\par
We see from the twisting
formula~\eqref{eqn:f3twist} that an appropriate twist
lets us take $c=e=1$, so
\[
  f(X,Y,Z) = [aX^2+bYZ,Z^2+dXY,Y^2+gXZ].
\]
Further, the semi-stablity of~$f$ tells us that at least one
of~$a,d,g$ is non-zero. Further, the permutation~$\pi=[X,Z,Y]\in N(\Gcal_3)$
conjugates~$f$ to
\[
  f^\pi = [aX^2+bYZ,Z^2+gXY,Y^2+dXZ],
\]
i.e., it swaps~$d$ and~$g$. This gives two subcases: (1) $d\ne0$; (2) $d=g=0$.

\par\noindent\framebox{$\boldsymbol{ce\ne0}$ and $\boldsymbol{d=g=0}$}
\par
Semi-stablity of~$f$  tells us that $a\ne0$, and then
a twist~\eqref{eqn:f3twist} lets us set~$a=1$, so
\[
  f(X,Y,Z) = [X^2+bYZ,Z^2,Y^2].
\]
We observe that~$f$ is a morphism with critical set
\[
  \Crit(f) = \{XYZ=0\}.
\]
We also observe that the permutation $\pi=[X,Z,Y]\in\Aut(f)$.  Let
$\f\in\Hom(f,f')$, so~$\f$ permutes the three lines in~$\Crit(f)$.
If~$\f$ swaps the lines~$Y=0$ and~$Z=0$, then~$\pi\f$ fixes them, so
it suffices to analyze the maps~$\f$ that fix the lines~$Y=0$
and~$Z=0$ and the maps~$\f$ that satisfy
$\{X=0\}\to\{Y=0\}\to\{Z=0\}\to\{X=0\}$.

The maps fixing the lines~$Y=0$ and~$Z=0$ have the
form~$\f=\SmallMatrix{\a&\b&\g\\0&\d&0\\0&0&1\\}$.
Then
\begin{align*}
  \f\circ f' &= [ \a(X^2+b'YZ) + \b Z^2 + Y^2, \d Z^2, Y^2], \\
  f\circ \f  &= [ (\a X+\b Y+\g Z)^2 + b\d YZ, Z^2, \d^2 Y^2 ].
\end{align*}
Looking at the $XY$ and $XZ$ terms in the first coordinate tells
us that $\a\b=\a\g=0$. The invertibility of~$\f$ implies that~$\a\ne0$,
so $\b=\g=0$, i.e.,~$\f$ is diagonal. Then
\[
  f' = f^\f = [\a\d X^2 + \a^{-1}\d^2 bYZ, Z^2, \d^3 Y^2],
\]
so we need~$\d^3=\a\d=1$ and $b' = \a^{-1}\d^2 b$. But the first
conditions imply that $\a^{-1}\d^2=\d^3=1$, so~$b'=b$, and we recover
the three maps in~$\Gcal_3$ that we already knew were
in~$\Aut(f)$. Composing with~$\pi$ gives a copy of~$\Gcal_{3,2}\cong
S_3$ sitting in~$\Aut(f)$.

Next suppose that~$\f$ cyclically permutes the lines~$XYZ=0$ as
described earlier.  Then
$\f =\SmallMatrix{0&0&\a\\ \b&0&0\\ 0&\g&0\\}$, and
\[
  f^\f = [\g^3 Y^2, \b^3 X^2, \, * ].
\]
This cannot possibly equal~$f'=[X^2+bYZ,\,*,\,*]$. 

\par\noindent\framebox{$\boldsymbol{dce\ne0}$}
\par
In this case we can twist using~\eqref{eqn:f3twist} to make $d=1$, so
\[
  f(X,Y,Z) = [aX^2+bYZ,Z^2+XY,Y^2+gXZ],
\]
and the assumed dominance of~$f$ tells us that~$a$ and~$b$ are not
both~$0$.

We first determinate when~$f$ is a morphism. Suppose that $[x,y,z]\in I(f)$.
Then
\begin{equation}
  \label{eqn:ax2byz}
  ax^2 = -byz,\quad z^2=-xy,\quad y^2=-gxz,
\end{equation}
and multiplying these three equations yields $a(xyz)^2=-bg(xyz)^2$.
So either $xyz=0$ or $a=-bg$. We start with the former and observe:
\begin{align*}
  x = 0 &\Longrightarrow z=0 \Longrightarrow y=0 \quad \rightarrow\leftarrow, \\
  y = 0 & \Longrightarrow  z=0  \Longrightarrow ax^2=0  \Longrightarrow a=0, \\
  z = 0 & \Longrightarrow y=ax^2=0  \Longrightarrow a=0.
\end{align*}
Hence~$I(f)\cap\{XYZ=0\}$ is empty if $a\ne0$, and it contains the
point $\bigl\{[1,0,0]\bigr\}$ if~$a=0$.

Next suppose that $xyz\ne0$ and $a=-bg$.  If $a=0$, then necessarily
$g=0$ (since we cannot have~$a$ and~$b$ both~$0$). But then~$y=0$,
which contradicts the case that we are studying. On the other hand,
if~$g\ne0$, then the solutions to~\eqref{eqn:ax2byz} with $xyz\ne0$
are the points of the form $[1,-\g^2,\g]$ with $\g^3=-g$.  This
completes the proof that
\[
  I(f) = \begin{cases}
    \emptyset &\text{if $a\ne0$ and $a\ne-bg$,} \\
    \bigl\{[1,0,0]\bigr\} &\text{if $a=0$,} \\
    \bigl\{[1,-\g^2,\g] : \g^3=-g\bigr\} &\text{if $a=-bg\ne0$.}
  \end{cases}
\]
We also compute the critical locus
\[
  \Crit(f) = \{ ag X^3 + b Y^3 + bg Z^3 - (4a+bg) XYZ = 0 \}.
\]

\par\noindent\framebox{$\boldsymbol{dce\ne0}$ \textbf{and} $\boldsymbol{a=0}$}
\par
We note that since~$a=0$, we must have~$b\ne0$. The map~$f$ has the form
\[
  f(X,Y,Z) = [bYZ,Z^2+XY,Y^2+gXZ],
\]
and its indeterminacy locus is a single point,
$I(f)=\bigl\{[1,0,0]\bigr\}$.
Further, the critical locus of~$f$ is the cubic curve
\[
  \Crit(f) = \{ Y^3 + g Z^3 - g XYZ = 0 \}.
\]
If $g\ne0$, then~$\Crit(f)$ is a nodal cubic with node~$[1,0,0]$,
while if~$g=0$, then~$\Crit(f)$ is the triple line~$Y^3=0$. We
consider these cases separately.

\par\noindent\framebox{$\boldsymbol{dce\ne0}$ \textbf{and} $\boldsymbol{a=g=0}$}
\par
We are now in the case that $f(X,Y,Z) = [bYZ,Z^2+XY,Y^2]$,
$I(f)=\bigl\{[1,0,0]\bigr\}$, and $\Crit(f)=\{Y=0\}$.
Any~$\f\in\Hom(f,f')$ maps~$I(f)$ and~$\Crit(f)$ to~$I(f')$
and~$\Crit(f')$, so must have the form
$\f=\SmallMatrix{1&\mu&\l\\0&\a&0\\0&\g&\d\\}$. Comparing the first
coordinates of
\begin{align*}
  \f\circ f' &= \bigl [b'YZ+\mu(Z^2+XY)+\l Y^2,\, *,\, * \bigr], \\
  f\circ\f  &= \bigl[ b(\a Y)(\g Y+\d Z), \, *,\, * \bigr],
\end{align*}
the lack of a $Z^2$ term in the latter forces~$\mu=0$.  We also see
that $b'=\a\d b$ and $\l=b\d$.
Setting~$\mu=0$ gives
\begin{align*}
  \f\circ f' &= \bigl [*,\, \a(Z^2+XY),\, * \bigr], \\
  f\circ\f  &= \bigl[ *,\, (\g Y+\d Z)^2 + (X+\l Z)(\a Y),\,* \bigr].
\end{align*}
The lack of a $Y^2$ term in the former forces $\g=0$, and thus
\begin{align*}
  \f\circ f' &= \bigl [b'YZ+\l Y^2, \a Z^2+\a XY, \d Y^2 \bigr], \\
  f\circ\f  &= \bigl[ \a\d b Y Z, \d^2 Z^2 + \a XY + \a \l Y Z, \a^2 Y^2 \bigr].
\end{align*}
The lack of a $Y^2$ term in the first coordinate forces $\l=0$,
so~$\f$ is diagonal, and then comparing the remaining terms, we see
that
\[
  \f\circ f'=f\circ\f
  \quad\Longleftrightarrow\quad
  [b',\a,\a,\d] = [\a\d b,\d^2,\a,\a^2].
\]
Hence $\f\circ f'=f\circ\f$ if and only if $\a^3=\d^3=1$ and $\d=\a^2$
and $b'=b$, which completes the proof (in this case) that~$b$ is
a~$N(\Gcal_3)$ invariant and that~$\Aut(f)\cong C_3$.

\par\noindent\framebox{$\boldsymbol{dceg\ne0}$ \textbf{and} $\boldsymbol{a=0}$}
\par
We are working with maps of the form $f(X,Y,Z) =
[bYZ,Z^2+XY,Y^2+gXZ]$.  As in the previous case, we have
$I(f)=\bigl\{[1,0,0]\bigr\}$, but now the critical locus is a nodal
cubic curve,
\[
  C : Y^3+gZ^3-gXYZ=0, 
\]
with node at~$[1,0,0]$. Let $\f\in\Hom(f,f')$. Then~$\f$
fixes~$[1,0,0]$, and also induces an isomorphism of~$C\to C'$.  In
particular, the two tangent lines at the nodes form a
$\f$-invariant set, so~$\f$ either leaves each of the lines $YZ=0$
invariant, or it swaps them.

Suppose first that it leaves them invariant. Then~$\f$ has the form
$\f=\SmallMatrix{\a&\b&\g\\0&\d&0\\0&0&1\\}$ and we find that
\begin{align*}
  \f\circ f' &=  [ *,\, *,\, Y^2 + g'XZ], \\
  f\circ\f  &= \bigl[ *,\, *,\,  \d^2 Y^2 + g (\a X+\b Y+\g Z) Z \bigr].
\end{align*}
The lack of~$YZ$ and~$Z^2$ monomials and the assumption that~$g\ne0$ gives
$\b=\g=0$. Hence~$\f$ is diagonal, which yields
\[
  f^\f = [\a^{-1} \d b Y Z, \d^{-1} Z^2 + \a XY, \d^2 Y^2 + \a g X Z].
\]
Therefore
\[
f'=f^\f \quad\Longleftrightarrow\quad
[ \a^{-1}\d b, \d^{-1}, \a, \d^2, \a g] =[b', 1, 1, 1, g'].
\]
The middle three coordinates give $\d^{-1}=\a=\d^2$, which is
equivalent to $\a^3=\d^3=1$ and $\d=\a^2$. Then the other coordinates
force $b'=b$ and $g'=g$, so we obtain only the three maps in~$\Aut(f)$
that we already knew.

Next we consider the case that~$\f$ swaps the nodal tangent lines,
which means that~$\f$ has the form
$\f=\SmallMatrix{\a&\b&\g\\0&0&\d\\0&1&0\\}$. Then
\begin{align*}
  \f\circ f' &=  [ *,\, *,\, Z^2+XY], \\
  f\circ\f  &= \bigl[ *,\, *,\, (\d Z)^2 + g(\a X+\b Y+\g Z)Y \bigr].
\end{align*}
The lack of~$Y^2$ and~$YZ$ monomials (and the fact that $g\ne0$) tells
us that~$\b=\g=0$. Then
\[
  f^\f = [\a^{-1}\d bYZ, \d^2 Z^2 + \a g XY, \d^{-1} Y^2 + \a XZ],
\]
so $f'=f^\f$ if and only if
\[
  [\a^{-1}\d b, \d^2, \a g, \d^{-1}, \a] = [b', 1, 1, 1, g'].
\]
A bit of algebra shows that this last equality is equivalent to the
following four conditions:
\[
  g'=g^{-1},\quad b'=bg,\quad \d^3=1,\quad \a=(\d g)^{-1}.
\]
So first we find that~$f'\ne f$ is $N(\Gcal_3)$-conjugate to~$f$ if
and only if $(b',g')=(bg,g^{-1})$.  And second, we find that~$\Aut(f)$
has elements of this form if and only if $g=1$, in which case
taking~$\d^3=1$ and $\a= g\d^2$ gives three additional elements,
making~$\Aut(f)$ isomorphic to~$C_3\rtimes C_2$.

\par\noindent\framebox{$\boldsymbol{adce\ne0}$ \textbf{and} $\boldsymbol{a=-bg}$}
\par
In this case our maps look like
\[
  f(X,Y,Z) = [-bg X^2+bYZ,Z^2+XY,Y^2+gXZ].
\]
To make our computation notationally less cumbersome, we twist by
$[uX,vY,wZ]$ with $w=g^{-1/3}$, $v=1$, and $u=-w^2$. Then~$f$ has the form
\[
  f(X,Y,Z) = [bg  X^2 - bg  YZ,  Z^2 -  XY,  g Y^2 - g XZ].
\]
The indeterminacy locus consists of three points,
\[
  I(f) = \bigl\{ [1,\rho,\rho^2] : \rho\in\bfmu_3 \bigr\}.
\]

We make another change of variables to move the points in~$I(f)$ to
the standard basis vectors. Thus we let~$\z$ be a primitive cube root
of unity and $U=\SmallMatrix{1&1&1\\ 1&\z&\z^2\\ 1&\z^2&\z\\}$. Then
$f^U$ has indeterminacy locus
\[
  I(f^U) = \bigl\{[1,0,0],[0,1,0],[0,0,1]\bigr\}.
\]
To ease notation, we let $F=f^U$.  Letting $\pi=\z-1$, a short
calculation shows that~$F$ has the form
\begin{multline*}
  F(X,Y,Z) = [
  A XY + B XZ + C YZ, 
  B XY + C XZ + A YZ,\\*
  C XY + A XZ + B YZ],
\end{multline*}
where
\begin{align*}
  A &= \pi b g - (2\pi+3) g + (\pi+3), \\*
  B &= \pi b g + (\pi+3) g - (2\pi+3), \\*
  C &= \pi b g + \pi g + \pi.
\end{align*}
This system of linear equations relating~$(A,B,C)$
to~$(bg,g,1)$ has determinant~$27$, so~$A,B,C$ are not all~$0$.
Further, some linear algebra yields
\begin{align*}
  A=B &\quad\Longleftrightarrow\quad g=1, \\*
  A=B=0 &\quad\Longleftrightarrow\quad b=g=1.
\end{align*}

Momentarily writing~$F=F_{A,B,C}$ to indicate the dependence on the
coefficients, we observe that conjugation by the permutations
\[
  \s(X,Y,Z):=[Z,X,Y]\quad\text{and}\quad \t(X,Y,Z):=[Y,X,Z]
\]
has the effect
\[
  F_{A,B,C}^\s=F_{A,B,C}\quad\text{and}\quad F_{A,B,C}^\t=F_{B,A,C}.
\]
Thus $\s\in\Aut(F_{A,B,C})$ and $\t\in\Hom(F_{A,B,C},F_{B,A,C})$. In
particular, if~$A=B$, then $\t\in\Aut(F_{A,B,C})$.

Let $\f\in\Hom(F,F')$. Then~$\f$ permutes the points in~$I(F)=I(F')$.
Suppose first that~$\f$ fixes the three points in~$I(F)$,
so~$\f=\SmallMatrix{1&0&0\\0&\b&0\\0&0&\g\\}$ is a diagonal
matrix. Then
\begin{multline*}
  F^\f = [
    \b A XY + \g B XZ + \b\g C YZ,
    B XY + \b^{-1}\g C XZ + \g A YZ, \\*
    \b\g^{-1} C XY + A XZ + \b B YZ],
\end{multline*}
so $F'=F^\f$ if and only if
\begin{multline*}
  [A',A',A',B',B',B',C',C',C'] \\* =
  [A,\b A,\g A, B, \b B, \g B, \b\g C, \b^{-1}\g C, \b\g^{-1} C].
\end{multline*}
We first observe that
\begin{align*}
  A\ne0 &\quad\Longrightarrow\quad [A',A',A']=[A,\b A,\g A] \quad\Longrightarrow\quad \b=\g=1,\\
  B\ne0 &\quad\Longrightarrow\quad [B',B',B']=[B,\b B,\g B] \quad\Longrightarrow\quad \b=\g=1.
\end{align*}
Hence if either~$A$ or~$B$ is non-zero, then $\b=\g=1$ and~$\f$ is the
identity matrix and~$F=F'$.

On the other hand, if~$A=B=0$, then we have the single map
$[YZ,XZ,XY]$, and we already computed in
Proposition~\ref{proposition:classifyC2C2}(c) that
\[
  \Aut([YZ,XZ,XY])=\Scal_3\Gcal_{2,2}\cong S_4.
\]
  
Using the fact that $\s\in\Aut(F_{A,B,C})$ together with the fact that
$\t\in\Aut(F_{A,B,C})$ if and only if $A=B$, we have proven that
\begin{align}
  \label{eqn:FABCsC3}
  \bigl\{\f\in\Aut(F_{A,B,C}) &:   \text{$\f$ leaves $I(F_{A,B,C})$ invariant} \bigr\}  \notag\\*
  &\supseteq \begin{cases}
  \left\<\s  \right\> \cong C_3
  &\text{if $A\ne B$,} \\
  \left\<\s,\t  \right\> = \Scal_3 
  &\text{if $A=B\ne0$,} \\
  \Scal_3\Gcal_{2,2} \cong S_4.
  &\text{if $A=B=0$.} \\
\end{cases}
\end{align}
And further, if $F_{A',B',C'}\ne F_{A,B,C}$, then
\begin{align}
    \label{eqn:homFABC}
  \bigl\{\f\in\Hom(F_{A,B,C},&F_{A',B',C'}):   \text{$\f$ leaves $I(F_{A,B,C})$ invariant} \bigr\}  \notag\\*
    &\supseteq \begin{cases}
    \emptyset&\text{if  $(A',B')\ne(B,A)$,}\\
    \{\t,\t\s,\t\s^2\}&\text{if  $(A',B')=(B,A)$.}\\
  \end{cases}
\end{align}

Next suppose that~$\f\in\Hom(F,F')$ induces a cyclic permutation of
the three points in~$I(F)$. Then~$\s^i\f$ fixes~$I(F)$ for some
$i\in\{1,2\}$, and we also know that~$\s\in\Aut(F)$,
so~$\s^i\f\in\Hom(F,F')$. It follows that we get no new elements
of~$\Aut(F)$ beyond those already described in~\eqref{eqn:FABCsC3},
and we get no new possibilities for~$\Hom(F,F')$.

Finally, suppose that~$\f$ induces a transposition on the
set~$I(F)$. Then for an appropriate choice of~$i\in\{0,1,2\}$, the
map~$\t\s^i\f$ fixes~$I(F)$.  Hence~$\t\s^i\f$ is one of the maps
described by~\eqref{eqn:FABCsC3} (if $F'=F$) or by~\eqref{eqn:homFABC}
(if $F'=F$), and in all cases we see that~$\f$ is already included in
the list of maps in~\eqref{eqn:FABCsC3} or~\eqref{eqn:homFABC}.

\par\noindent\framebox{$\boldsymbol{adce\ne0}$ \textbf{and} $\boldsymbol{a\ne-bg}$}
\par
Our map looks like
\[
  f(X,Y,Z) = [aX^2+bYZ,Z^2+XY,Y^2+gXZ].
\]
It is a morphism, and its critical locus is the cubic curve
\[
  g X^3 + b Y^3 + b g Z^3 - (4 a + b g) X Y Z = 0.
\]
We consider various subcases depending on whether $b$ and/or $g$ vanishes.

\par\noindent\framebox{$\boldsymbol{adce\ne0}$ \textbf{and} $\boldsymbol{b=0}$}
\par
Our map looks like
\[
  f(X,Y,Z) = [aX^2,Z^2+XY,Y^2+gXZ].
\]
It is a morphism, and its critical locus is the reducible cubic curve
\[
  g X^3 -  4 a X Y Z = 0.
\]
If $g\ne0$, then $\Crit(f)$ is the union of the line $X=0$ and a conic,
while if $g=0$, then $\Crit(f)$ is the union of the three lines $XYZ=0$.

Suppose first that~$\f\in\Hom(f,f')$ fixes the line~$X=0$ (which is
necessary if $g\ne0$). Then~$\f$ has the form
$\f=\SmallMatrix{1&0&0\\ \l&\a&\b\\ \mu&\g&\d\\}$.  We note that
$\f\circ f'$ has no~$YZ$ monomials, while
\[
  f\circ\f = [\cdots,\, 2\g\d YZ + \cdots, 2\a\b YZ + \cdots].
\]
Hence $\a\b=\g\d=0$. The non-singularity of~$\f$ precludes some
possibilities, so either $\a=\d=0$ or $\b=\g=0$.

Suppose first that $\b=\g=0$. Then
\begin{align*}
  \f\circ f' &= [\cdots,\, \text{no $XZ$ monomial},\,\text{no $XY$ monomial} ], \\
  f\circ\f &= [\cdots,\, 2\mu\d XZ+\cdots,\, 2\l\a XY+\cdots].
\end{align*}
The non-singularity of~$\f$ forces $\a\d\ne0$, so we find that $\mu=\l=0$, i.e.,
the matrix~$\f$ is diagonal. Then
\[
   f^\f = [a X^2, \a^{-1}\d^2 Z^2 + XY, \a^2\d^{-1} + g XZ],
\]
so $f'=f^\f$ if and only if
\[
  [a',1,1,1,g'] = [a,\a^{-1}\d^2,1,\a^2\d^{-1},g].
\]
This occurs if and only if $(a',g')=(a,g)$ and $\a^3=\d^3=1$ and
$\d=\a^2$.  So we find only the three elements of~$\Aut(f)$ that we
already had.

Next suppose that $\a=\d=0$. Then
\begin{align*}
  \f\circ f' &= [\cdots,\, \text{no $XY$ monomial},\,\text{no $XZ$ monomial} ], \\
  f\circ\f &= [\cdots,\, 2\mu\g XY+\cdots,\, 2\l\b XZ+\cdots].
\end{align*}
The non-singularity of~$\f$ tells us that $\b\g\ne0$, so $\l=\mu=0$.
Then
\[
  f^\f = [aX^2, \g^{-1}\b^2 Z^2 + g XY, \b^{-1}\g^2 Y^2+ XZ].
\]
Hence $f'=f^\f$ if and only if
\[
  [a',1,1,1,g'] = [a,\g^{-1}\b^2,g,\b^{-1}\g^2,1].
\]
This forces $g'=g$ and $a=a'g$ and $a'=ag'$, which combine to give
$g=g'=\pm1$.  Further $\b^3=\g^3$ and $\g^2=g \b$. Hence
$\b^6=\g^6=(g\b)^3=g\b^3$, so $\b^3=g$. 

Thus if $g=1$, then $f'=f$ and $\Aut(f)$ contains three additional
elements corresponding to taking~$\b\in\bfmu_3$ and $\g=\b^2$, while
if $g=-1$, then we find that the maps $f_{a,-1}$ and $f_{-a,-1}$ are
$\PGL_3(K)$-conjugate.

To recapitulate, if $g\ne0$, then
\[
\Aut(f) = \begin{cases}
  \left\<\SmallMatrix{1&0&0\\0&\z&0\\0&0&\z^2} 
  \right\>=\Gcal_3 &\text{if $g\ne 1$,} \\[2\jot]
  \left\<\SmallMatrix{1&0&0\\0&\z&0\\0&0&\z^2},
  \SmallMatrix{1&0&0\\0&0&\z\\0&\z^2&0\\}
  \right\>\cong C_3\rtimes C_2 &\text{if $g= 1$,} \\
\end{cases}
\]
Further, distinct maps $f_{a,g}$ and $f_{a',g'}$ are $N(\Gcal_3)$-conjugate
if and only if $g=g'=-1$ and $a'=-a$.

If $g=0$, then $\Aut(f)$ contains the above maps, but we must also consider
the possibility that~$\f\in\Aut(f)$ non-trivially permutes the three
lines $XYZ=0$ in~$\Crit(f)$, which is the following case.

\par\noindent\framebox{$\boldsymbol{adce\ne0}$ \textbf{and} $\boldsymbol{b=g=0}$}
\par
Our map looks like
\[
  f(X,Y,Z) = [aX^2,Z^2+XY,Y^2],
\]
and we are looking for maps $\f\in\Hom(f,f')$ that induce a
non-trivial permutation of the lines~$XYZ=0$. Such a~$\f$ has the form
$\f=\psi \pi$ with $\pi\in\Scal_3$ and~$\psi$ diagonal. The
map~$f^\psi$ has the form $F_{A,B,C}:=[AX^2,BZ^2+CXY,DY^2]$ for some
non-zero $A,B,C,D$. We claim that a non-trivial
permutation~$\pi\in\Scal_3$ cannot take a map of the form~$F_{A,B,C}$
to another map of the same form. Lacking a clever argument, we simply compute
the effect of each permutation:
\begin{align*}
  \pi &= [Y,X,Z] &
  F_{A,B,C}^\pi &= [BZ^2+CXY,AY^2,DX^2], \\ 
  \pi &= [Z,Y,X] &
  F_{A,B,C}^\pi &=  [DY^2,BX^2+CZY,AZ^2], \\ 
  \pi &= [X,Z,Y] &
  F_{A,B,C}^\pi &=  [AX^2,DZ^2,BY^2+CXZ], \\ 
  \pi &= [Y,Z,X] &
  F_{A,B,C}^\pi &=  [DZ^2,AY^2,BX^2+CYZ], \\ 
  \pi &= [Z,X,Y] &
  F_{A,B,C}^\pi &=  [BY^2+CXZ,DX^2,AZ^2]. 
\end{align*}
This completes the proof that we obtain no new maps if $b=g=0$.

\par\noindent\framebox{$\boldsymbol{abdce\ne0}$ \textbf{and} $\boldsymbol{g=0}$}
\par
Our map looks like
\[
  f(X,Y,Z) = [aX^2+bYZ,Z^2+XY,Y^2].
\]
Its critical locus is the singular irreducible cubic curve
\[
   b Y^3 + b g Z^3 - 4 a  X Y Z = 0
\]
having a node at $[1,0,0]$, and the two tangent lines at the node
are $YZ=0$.  Hence any~$\f\in\Hom(f,f')$ must either fix or swap the
two lines~$YZ=0$ (which will also force it to fix their intersection
point~$[1,0,0]$).

Suppose first that it fixes the nodal tangent lines.  Then~$\f$ has
the form $\f=\SmallMatrix{1&\l&\nu\\0&\b&0\\0&0&\g\\}$, and we find
that
\begin{align*}
  \f\circ f' &= [\cdots,\, \b Z^2 + \b XY,\, \cdots], \\
  f\circ\f &= [\cdots,\, \g^2 Z^2 + \b XY + \b\l Y^2 + \b\nu YZ,\, \cdots].
\end{align*}
The absence of~$Y^2$ and~$YZ$ terms, together with the invertibility of~$\f$ (which implies
that~$\b\ne0$) forces $\l=\nu=0$, i.e.,~$\f$ is diagonal. Then
\[
  f^\f = [a X^2 + \b \g b YZ, \b^{-1}\g^2 Z^2 + XY, \b^2\g^{-1} Y^2],
\]
so $f'=f^\f$ if and only if
\[
  [a',b',1,1,1] = [a, \b \g b, \b^{-1}\g^2, 1, \b^2\g^{-1}],
\]
so if and only if $a'=a$ and $b'=\b\g b$ and
$\b^{-1}\g^2=\b^2\g^{-1}=1$. The last condition is equivalent to
$\b^3=1$ and $\g=\b^2$, so in particular~$\b\g=1$. Hence $f'=f^\f$ if
and only if~$f'=f$ and~$\f$ is one of the three maps that we already
knew was in~$\Aut(f)$.

Next suppose that~$\f$ swaps the nodal tangent lines.  Then~$\f$ has
the form $\f=\SmallMatrix{1&\l&\nu\\0&0&\b\\0&\g&0\\}$, and we find
that
\begin{align*}
  \f\circ f' &= [\cdots,\, \cdots,\, \g Z^2 + \g XY], \\
  f\circ\f &= [\cdots,\, \cdots,\, \b^2 Z^2].
\end{align*}
The lack of an~$XY$ monomial forces~$\g=0$, contradicting the
invertibility of~$\f$. Hence there are no~$\f\in\Hom(f,f')$ that swap
the lines~$XY=0$.

\par\noindent\framebox{$\boldsymbol{abcdeg\ne0}$ \textbf{and} $\boldsymbol{ace\ne-bdg}$}
\par
We have resumed using the general form
\[
  f(X,Y,Z) = [aX^2+bYZ,cZ^2+dXY,eY^2+gXZ],
\]
so the earlier dehomogenized condition $a\ne-bg$ for~$f$ to be a
morphism becomes $ace\ne-bdg$.

It is clear that every diagonal map~$\f\in\PGL_3$ preserves the form
of~$f$, as does the transposition~$[X,Z,Y]$. We are going to prove
that the full set of elements of~$\PGL_3$ that preserves this general
form is the the group generated by diagonal maps and this
transposition, except for one exceptional case

The map~$f$ is a morphism, and its critical locus is the cubic
curve
\begin{equation}
 \label{eqn:critfbdg4ace}
  \Crit(f): adg X^3 + bde Y^3 + bcg Z^3 - (bdg+4ace) XYZ = 0.
\end{equation}
Conjugating by~$\psi:=[uX,vY,wZ]$ gives
\[
  f^\psi = [uaX^2 + u^{-1}vwbYZ, v^{-1}w^2cZ^2 + udXY, v^2w^{-1}eY^2 + ugXZ]
\]
with critical locus    
\[
  \Crit(f^\psi) :  u^3 adg X^3 + v^3 bde Y^3 + w^3 bcg Z^3 - uvw (bdg+4ace) XYZ = 0.
\]
So taking $u^3=(adg)^{-1}$, $v^3=(bde)^{-1}$, and $w^3=(bcg)^{-1}$, the critical
locus becomes
\[
  \Crit(f^\psi) = \{ X^3 + Y^3 + Z^3 - \D XYZ = 0\},
\]
where $\D=(ace)^{-1/3}(bdg)^{-2/3} (bdg+4ace)$.  A short computation
shows that the cubic curve $\Crit(f^\psi)$ is non-singular if and only
if $\D^3\ne27$. (As we will see later, if~$\D^3=27$, then
$\Crit(f^\psi)$ is a union of three lines.)  Using the formula
for~$\D$, we observe that
\begin{align*}
  \D^3 - 27 &= (bdg+4ace)^3 - 27(ace)(bdg)^2 \\
  &=  (bdg+ace)(bdg-8ace)^2.
\end{align*}
We have ruled out $bdg=-ace$, so we are reduced to two cases, which we
consider in turn.

\par\noindent\framebox{$\boldsymbol{abcdeg\ne0}$ \textbf{and}
  $\boldsymbol{bdg\ne-ace}$ \textbf{and} $\boldsymbol{bdg\ne 8ace}$}
\par
The Hessian of $X^3 + Y^3 + Z^3 - \D XYZ$ is
\[
   \det\SmallMatrix{6X&-\D Z&-\D Y\\-\D Z&6Y&-\D X\\-\D Y&-\D X&6Z\\}
  =  -6 \D ^2 (X^3+Y^3+Z^3) + 2(108-\D ^3)XYZ.
\]
The flex points of the smooth cubic curve $\Crit(f^\psi)$ are thus the
roots of $(27-\D^3)XYZ=0$. Our assumptions imply that~$\D^3\ne27$, so
the flex points are the nine points with $XYZ=0$, i.e., the points
\[
  P_i := \begin{cases}
    [1,-\z^i,0],&\text{for $i=0,1,2$,}\\
    [0,1,-\z^i],&\text{for $i=3,4,5$,}\\
    [-\z^i,0,1],&\text{for $i=6,7,8$,}\\
  \end{cases}
\]
where we recall that~$\z$ is a primitive cube root of unity.

To ease notation, we let $F=f^\psi$, and we write~$F$ as
\begin{equation}
  \label{eqn:FAX2BYZ}
  F(X,Y,Z) = [AX^2+BYZ,CZ^2+DXY,EY^2+GXZ],
\end{equation}
where~$A,\ldots,G$ are monomials in fractional powers of~$a,\ldots,g$.
More precisely, tracking through their dependence on~$a,\ldots,g$, they
satisfy the multiplicative relations
\[
  AD = BC,\quad AG=BE,\quad BCG = 1,
\]
and if~$a,\ldots,g$ are generic, these are the only such relations
that they satisfy.\footnote{These formulas will explain why, when we
  compute the maps $\f\in\PGL_3$ that preserve the
  form~\eqref{eqn:FAX2BYZ}, we don't get all diagonal matrices.  More
  precisely, the effect of the diagonal matrix with entries~$\a,\b,\g$
  is to multiply $AB^{-1}C^{-1}D$ by~$(\a/\g)^3$ and to multiply $BCG$
  by~$\g^3$, so~$\a$ and~$\g$ must be cube roots of~$1$.}
We also let
\[
  \G = \{\text{flex points of $\Crit(F)$}\}= \{P_0,P_1,\ldots,P_8\}.
\]

Suppose that $\f\in\Hom(F,F')$. Then~$\f$ permutes the
nine points in~$\G$, but not entirely independently.  The line through
any two points in~$\G$ contains a unique third point of~$\G$, so
the~$\f$-images of two points in~$\G$ determines the~$\f$-image of a
third point.  So if we choose three non-colinear points, for
example~$P_0,P_1,P_3$, then the map~$\f$ is uniquely determined by the
images of these three points, where those images must be chosen from
among the non-colinear triples in~$\G$. Unfortunately, there are a
large number of possibilities.
 
For each triple of indices~$(i,j,k)$ such that $P_i,P_j,P_k$ are not
co-linear, we let~$\f\in\PGL_3$ be a general map satisfying
\[
  \f(P_0)=P_i,\quad \f(P_1)=P_j,\quad \f(P_3)=P_k.
\]
Thus~$\f$ has the form
\[
  \f = \SmallMatrix{*&*&*&\\ *&*&*&\\ *&*&*&\\}
       \SmallMatrix{1&0&0\\0&\b&0\\0&0&\g}
       \SmallMatrix{1&1&0\\ -1&-\z&1\\ 0&0&-1\\}^{-1},
\]
where the matrix with~$*$ entries is the matrix whose columns are the
point~$P_i,P_j,P_k$, and where~$\b,\g\in K^*$ are arbitrary. (If
necessary, we may write~$\f_{i,j,k,\b,\g}$ to indicate the dependence of~$\f$
on the various parameters.)

We compute~$F^\f$ and pick out the coefficients of the~$12$ monomials
that do not appear in~$F'$. Each of those coefficients is a linear
combination of~$A,\ldots,G$, with coefficients that are polynomials
in~$\b$ and~$\g$. We accumulate this data in the form
\begin{equation}
  \label{eqn:coefMABG}
  \SmallMatrix{
    \text{coeff of $Y^2$ in $X$-coord of $F^\f$} \\
    \text{coeff of $Z^2$ in $X$-coord of $F^\f$} \\
    \text{coeff of $XY$ in $X$-coord of $F^\f$} \\
    \text{coeff of $XZ$ in $X$-coord of $F^\f$} \\
    \text{coeff of $X^2$ in $Y$-coord of $F^\f$} \\
    \text{coeff of $Y^2$ in $Y$-coord of $F^\f$} \\
    \text{coeff of $XZ$ in $Y$-coord of $F^\f$} \\
    \text{coeff of $YZ$ in $Y$-coord of $F^\f$} \\
    \text{coeff of $X^2$ in $Z$-coord of $F^\f$} \\
    \text{coeff of $Z^2$ in $Z$-coord of $F^\f$} \\
    \text{coeff of $XY$ in $Z$-coord of $F^\f$} \\
    \text{coeff of $YZ$ in $Z$-coord of $F^\f$} \\
  }
  = M_{i,j,k}
  \SmallMatrix{
    A\\B\\C\\D\\E\\G\\
  },
\end{equation}
where $M_{i,j,k}$ is a~$12$-by-$6$ matrix whose entries are
polynomials in the ring $\QQ(\z)[\b,\g]$.  (At times we may
write~$M_{i,j,k}(\b,\g)$ to indicate the dependence
of~$M_{i,j,k}$ on~$\b$ and~$\g$.)  The fact that~$F^\f$ is not allowed
to have any of the indicated monomials implies
that~\eqref{eqn:coefMABG} is the zero vector, and then our assumption
that~$A,\ldots,G$ are all non-zero implies that the matrix~$M_{i,j,k}$
has rank at most~$5$. (Indeed, it implies the far stronger statement
that the column null space of~$M_{i,j,k}$ contains a vector whose
coordinates are all non-zero.)

Unfortunately, there are~$432$ valid~$i,j,k$ triples, and even
exploiting various symmetries, there are too many cases to check by
hand. So we give a computer assisted proof via the following algorithm.

\par\vspace{5pt}\noindent\textbf{Step 1}:
For each valid choice of~$i,j,k$, we computed the determinants of
various of the $6$-by-$6$ minors of~$M_{i,j,k}$ and set them equal
to~$0$. This gave many simultaneous equations for the two
unknowns~$\b$ and~$\g$. We used resultants on pairs of equations to
eliminate~$\g$, and then took the $\gcd$ of pairs of equations with
respect to~$\b$. Taking the square-free part, we obtained a separable
polynomial~$\Pi_{i,j,k}(\b)$ satisfying:
\[
  \rank M_{i,j,k}(\b,\g)\le 5~\text{for some $\b,\g\in K^*$}
  \quad\Longrightarrow\quad \Pi_{i,j,k}(\b)=0.
\]
The output from our program showed that
\[
  \Pi_{i,j,k}(\b)  \Bigm| (\b^6-1)\left(\b+\frac12\right).
\]
Indeed, the roots of~$\Pi_{i,j,k}(\b)$ are $6$'th roots of unity
except in the six cases $M_{6,8,0}$, $M_{7,8,0}$, $M_{6,8,1}$,
$M_{7,8,1}$, $M_{6,8,2}$, $M_{7,8,2}$, for which~$\Pi_{i,j,k}(\b)$
also had~$\b=-\frac12$ as a root.

\par\vspace{5pt}\noindent\textbf{Step 2}:
Loop through all valid~$i,j,k$ and all~$\b_0\in\bfmu_6\cup\{-\frac12\}$.
We let
\[
  r=r_{i,j,k}(\b_0,\g):=\rank M_{i,j,k}(\b_0,\g)
\]
denote the rank of $M_{i,j,k}(\b_0,\g)$ over the function
field~$K(\g)$, i.e., where $\g$ is an indeterminate.
We also write $r_{i,j,k}(\b_0,\g_0)$ for the rank of
the matrix when we set $\g=\g_0$.

\par\vspace{5pt}\noindent\textbf{Step 2.1}:
If $r_{i,j,k}(\b_0,\g)\le5$, compute the null space of
$M_{i,j,k}(\b_0,\g)$ over the function field~$\CC(\g)$.  We found that
there are~$144$ choices of~$(i,j,k,\b_0)$ for which
$r_{i,j,k}(\b_0,\g)\le5$, and in every case, every vector in
$\operatorname{Null}\bigl(M_{i,j,k}(\b_0,\g)\bigr)$ has at least one
coordinate equal to~$0$.

\begin{example}
\label{example:01k11}
We illustrate with an example. Let $k\in\{3,4,5\}$. Then
$r_{0,1,k}(1,\g)=3$, i.e., the matrix $M_{0,1,k}(1,\g)$ has rank~$3$
over the function field~$\CC(\g)$.  One then checks that every vector
\[
  [A,\ldots,G]\in\operatorname{Null}_{\CC(\g)}\bigl(M_{0,1,k}(1,\g)\bigr)
\]
has $B=E=0$. However, if we further specialize by setting~$\g=1$, then
$M_{0,1,k}(1,1)$ is the $0$-matrix. The associated elements
of~$\PGL_3$ are the diagonal matrices
\[
  \f_{0,1,3,1,1}=\SmallMatrix{1&0&0\\0&1&0\\0&0&1\\},\quad
  \f_{0,1,4,1,1}=\SmallMatrix{1&0&0\\0&1&0\\0&0&\z\\},\quad
  \f_{0,1,5,1,1}=\SmallMatrix{1&0&0\\0&1&0\\0&0&\z^2\\}.
\]
\end{example}


\par\vspace{5pt}\noindent\textbf{Step 2.2}:
Let $M'_{i,j,k}(\b_0,\g)$ be a $12$-by-$r$ matrix whose column span
over the function field~$\CC(\g)$ is the same as the column span
of $M_{i,j,k}(\b_0,\g)$. (In most cases we have $r=6$ and $M'=M$,
but as noted in Step~2.1, there are~144 cases with~$r\le5$.)

We computed the determinants of various $r$-by-$r$ minors
of~$M'_{i,j,k}(\b_0,\g)$ and then computed their~$\gcd$.  We found
exactly~$72$ values of~$(i,j,k,\b_0)$ such that there exists
some~$\g_0$ with
\[
  r_{i,j,k}(\b_0,\g_0) < r_{i,j,k}(\b_0,\g),
\]
i.e., such that the generic rank over~$\CC(\g)$ is strictly larger
than the specialized rank over~$\CC$ for some~$\g=\g_0\in\CC$.  More precisely,
there are~$36$ cases with generic rank~$r_{i,j,k}(\b_0,\g)=6$ and~$18$
cases each with~$r_{i,j,k}(\b_0,\g)=5$ and~$3$.  Further, and most
importantly, we found in all~$72$ cases that~$\b_0^3=1$
and~$\g_0^6=1$. The complete set of~$(i,j,k,\b_0)$ is given
in Table~\ref{table:Mijkb0g}.

\begin{table}[t]  
{\small\[
\begin{array}{|c|c|c|} \hline
M & r & \b_0 \\ \hline\hline  
M_{0,1,3} & 3 & 1\\ \hline 
M_{0,1,4} & 3 & 1\\ \hline 
M_{0,1,5} & 3 & 1\\ \hline 
M_{2,0,3} & 3 & 1\\ \hline 
M_{2,0,4} & 3 & 1\\ \hline 
M_{2,0,5} & 3 & 1\\ \hline 
M_{1,2,3} & 3 & 1\\ \hline 
M_{1,2,4} & 3 & 1\\ \hline 
M_{1,2,5} & 3 & 1\\ \hline 
M_{7,6,3} & 3 & \z^2\\ \hline 
M_{6,8,3} & 3 & \z^2\\ \hline 
M_{8,7,3} & 3 & \z^2\\ \hline 
M_{7,6,4} & 3 & \z^2\\ \hline 
M_{6,8,4} & 3 & \z^2\\ \hline 
M_{8,7,4} & 3 & \z^2\\ \hline 
M_{7,6,5} & 3 & \z^2\\ \hline 
M_{6,8,5} & 3 & \z^2\\ \hline 
M_{8,7,5} & 3 & \z^2\\ \hline
\end{array}
\quad
\begin{array}{|c|c|c|} \hline
M & r & \b_0 \\ \hline\hline  
M_{1,0,3} & 5 & \z\\ \hline 
M_{1,0,4} & 5 & \z\\ \hline 
M_{1,0,5} & 5 & \z\\ \hline 
M_{0,2,3} & 5 & \z\\ \hline 
M_{0,2,4} & 5 & \z\\ \hline 
M_{0,2,5} & 5 & \z\\ \hline 
M_{2,1,3} & 5 & \z\\ \hline 
M_{2,1,4} & 5 & \z\\ \hline 
M_{2,1,5} & 5 & \z\\ \hline 
M_{6,7,3} & 5 & \z^2\\ \hline 
M_{8,6,3} & 5 & \z^2\\ \hline 
M_{7,8,3} & 5 & \z^2\\ \hline 
M_{6,7,4} & 5 & \z^2\\ \hline 
M_{8,6,4} & 5 & \z^2\\ \hline 
M_{7,8,4} & 5 & \z^2\\ \hline 
M_{6,7,5} & 5 & \z^2\\ \hline 
M_{8,6,5} & 5 & \z^2\\ \hline 
M_{7,8,5} & 5 & \z^2\\ \hline 
\end{array}
\quad
\begin{array}{|c|c|c|} \hline
M & r & \b_0 \\ \hline\hline  
M_{0,1,6} & 6 & \z\\ \hline 
M_{0,1,7} & 6 & \z\\ \hline 
M_{0,1,8} & 6 & \z\\ \hline 
M_{2,0,6} & 6 & \z\\ \hline 
M_{2,0,7} & 6 & \z\\ \hline 
M_{2,0,8} & 6 & \z\\ \hline 
M_{1,2,6} & 6 & \z\\ \hline 
M_{1,2,7} & 6 & \z\\ \hline 
M_{1,2,8} & 6 & \z\\ \hline 
M_{1,0,6} & 6 & \z^2\\ \hline 
M_{1,0,7} & 6 & \z^2\\ \hline 
M_{1,0,8} & 6 & \z^2\\ \hline 
M_{0,2,6} & 6 & \z^2\\ \hline 
M_{0,2,7} & 6 & \z^2\\ \hline 
M_{0,2,8} & 6 & \z^2\\ \hline 
M_{2,1,6} & 6 & \z^2\\ \hline 
M_{2,1,7} & 6 & \z^2\\ \hline 
M_{2,1,8} & 6 & \z^2\\ \hline 
\end{array}
\quad
\begin{array}{|c|c|c|} \hline
M & r & \b_0 \\ \hline\hline  
M_{8,7,1} & 6 & 1\\ \hline 
M_{7,8,1} & 6 & 1\\ \hline 
M_{9,7,1} & 6 & 1\\ \hline 
M_{7,9,1} & 6 & 1\\ \hline 
M_{9,8,1} & 6 & 1\\ \hline 
M_{8,9,1} & 6 & 1\\ \hline 
M_{8,7,2} & 6 & 1\\ \hline 
M_{7,8,2} & 6 & 1\\ \hline 
M_{9,7,2} & 6 & 1\\ \hline 
M_{7,9,2} & 6 & 1\\ \hline 
M_{9,8,2} & 6 & 1\\ \hline 
M_{8,9,2} & 6 & 1\\ \hline 
M_{8,7,2} & 6 & 1\\ \hline 
M_{7,8,2} & 6 & 1\\ \hline 
M_{9,7,2} & 6 & 1\\ \hline 
M_{7,9,2} & 6 & 1\\ \hline 
M_{9,8,2} & 6 & 1\\ \hline 
M_{8,9,2} & 6 & 1\\ \hline 
\end{array}
\]}
\caption{Matrices $M_{i,j,k}(\b_0,\g)$ with generic rank $r$ such that there exists a $\g_0$
  such that $M_{i,j,k}(\b_0,\g_0)$ has rank strictly smaller than $r$}
\label{table:Mijkb0g}
\end{table}

\par\vspace{5pt}\noindent\textbf{Step 3}: It remains to
compute the null space of $M_{i,j,k}(\b_0,\g_0)$ as~$\b_0$
and~$\g_0$ range over $\b_0\in\bfmu_3$ and $\g_0\in\bfmu_6$. For each
$(i,j,k,\b_0,\g_0)$ such that~$\f_{i,j,k}$ is invertible, we check
whether the null space of $M_{i,j,k}(\b_0,\g_0)$ contains a vector
$(A,B,C,D,E,G)$ whose coordinates are all non-zero. It turns out that
in every such case the matrix $M_{i,j,k}(\b_0,\g_0)$ is
identically~$0$. Table~\ref{table:specialM} lists the values
of~$(i,j,k,\b_0,\g_0)$, together with the associated
map~$\f\in\PGL_3$. We observe that the~$\f$ in
Table~\ref{table:specialM} consist of the~$9$ diagonal maps
satisfying~$\f^3=1$, together with the conjugation of these~$9$ maps
by the transposition~$[X,Z,Y]$.

\begin{table}[t]
{\small\[
\begin{array}{|c|c|c|c|} \hline
  M & \b_0 & \g_0 & \phi \\ \hline\hline
    M_{1,2,4} = 0 & 1 & 1 & \SmallMatrix{ 1 & 0 & 0 \\0 & 1 & 0 \\0 & 0 & 1 \\ } \\ \hline
    M_{1,2,5} = 0 & 1 & 1 & \SmallMatrix{ 1 & 0 & 0 \\0 & 1 & 0 \\0 & 0 & \smash[t]{\z^2} \\ } \\ \hline
    M_{1,2,6} = 0 & 1 & 1 & \SmallMatrix{ 1 & 0 & 0 \\0 & 1 & 0 \\0 & 0 & \z \\ } \\ \hline
    M_{3,1,4} = 0 & 1 & \z & \SmallMatrix{ 1 & 0 & 0 \\0 & \z & 0 \\0 & 0 & \z \\ } \\ \hline
    M_{3,1,5} = 0 & 1 & \z & \SmallMatrix{ 1 & 0 & 0 \\0 & \z & 0 \\0 & 0 & 1 \\ } \\ \hline
    M_{3,1,6} = 0 & 1 & \z & \SmallMatrix{ 1 & 0 & 0 \\0 & \z & 0 \\0 & 0 & \smash[t]{\z^2} \\ } \\ \hline
    M_{2,3,4} = 0 & 1 & \z^2 & \SmallMatrix{ 1 & 0 & 0 \\0 & \smash[t]{\z^2} & 0 \\0 & 0 & \smash[t]{\z^2} \\ } \\ \hline
    M_{2,3,5} = 0 & 1 & \z^2 & \SmallMatrix{ 1 & 0 & 0 \\0 & \smash[t]{\z^2} & 0 \\0 & 0 & \z \\ } \\ \hline
    M_{2,3,6} = 0 & 1 & \z^2 & \SmallMatrix{ 1 & 0 & 0 \\0 & \smash[t]{\z^2} & 0 \\0 & 0 & 1 \\ } \\ \hline
\end{array}\quad
\begin{array}{|c|c|c|c|} \hline
  M & \b_0 & \g_0 & \phi \\ \hline\hline
    M_{8,7,4} = 0 & \z^2 & 1 & \SmallMatrix{ 1 & 0 & 0 \\0 & 0 & \z \\0 & \z & 0 \\ } \\ \hline
    M_{7,9,4} = 0 & \z^2 & 1 & \SmallMatrix{ 1 & 0 & 0 \\0 & 0 & 1 \\0 & 1 & 0 \\ } \\ \hline
    M_{9,8,4} = 0 & \z^2 & 1 & \SmallMatrix{ 1 & 0 & 0 \\0 & 0 & \smash[t]{\z^2} \\0 & \smash[t]{\z^2} & 0 \\ } \\ \hline
    M_{8,7,5} = 0 & \z^2 & \z & \SmallMatrix{ 1 & 0 & 0 \\0 & 0 & \smash[t]{\z^2} \\0 & \z & 0 \\ } \\ \hline
    M_{7,9,5} = 0 & \z^2 & \z & \SmallMatrix{ 1 & 0 & 0 \\0 & 0 & \z \\0 & 1 & 0 \\ } \\ \hline
    M_{9,8,5} = 0 & \z^2 & \z & \SmallMatrix{ 1 & 0 & 0 \\0 & 0 & 1 \\0 & \smash[t]{\z^2} & 0 \\ } \\ \hline
    M_{8,7,6} = 0 & \z^2 & \z^2 & \SmallMatrix{ 1 & 0 & 0 \\0 & 0 & 1 \\0 & \z & 0 \\ } \\ \hline
    M_{7,9,6} = 0 & \z^2 & \z^2 & \SmallMatrix{ 1 & 0 & 0 \\0 & 0 & \smash[t]{\z^2} \\0 & 1 & 0 \\ } \\ \hline
    M_{9,8,6} = 0 & \z^2 & \z^2 & \SmallMatrix{ 1 & 0 & 0 \\0 & 0 & \z \\0 & \smash[t]{\z^2} & 0 \\ } \\ \hline
\end{array}
\]}
\caption{Matrices $M_{i,j,k}(\b_0,\g_0)$ with $\b_0,\g_0\in\CC^*$ such
  that $\operatorname{Null}\bigl(M_{i,j,k}(\b_0,\g_0)$ contains a
  vector with all non-zero coordinates, with the associated
  $\f\in\PGL_3$}
\label{table:specialM}
\end{table}

This long calculation completes the proof that under our assumptions
that~$abcdeg\ne0$ and $ace\ne-bdg$, maps of the form
\begin{equation}
  \label{eqn:fax2byz}
  f(X,Y,Z) = [aX^2+bYZ,cZ^2+dXY,eY^2+gXZ]
\end{equation}
satisfy
\[
  \Hom(f,f') \subseteq \Dcal \cup \pi\Dcal,
\]
where~$\Dcal\subset\PGL_3$ is the group of diagonal matrices and~$\pi$
is the transposition~$\pi=[X,Z,Y]$.  We  normalize maps of the
form~\eqref{eqn:fax2byz} by conjugating by~$[uX,vY,wZ]$ with
$u=a^{-1}$, $v^3=c^{-1}e^{-2}$, and $w^3=c^{-2}e^{-1}$. This puts~$f$ into the
normalized form
\[
  f_{b,d,g}(X,Y,Z) = [X^2+bYZ,Z^2+dXY,Y^2+gXZ]
\]
that depends on three non-zero parameters~$b,d,g$ satisfying
$bdg\ne-1$.

We consider first conjugation by a diagonal map $\f=[ X,\b Y,\g Z]$. Then
\[
  f_{b,d,g}^\f(X,Y,Z) =
  [ X^2 + \b \g bYZ, \b^{-1}\g^2 Z^2 +  dXY, \b^2\g^{-1} Y^2 +  gXZ].
\]
The map~$f_{b,d,g}^\f$ is thus in normalized form if and only if
\[
  [ \b \g, \b^{-1}\g^2, \b^2\g^{-1}, 1 , d, g] = [1,1,1,1,d,g].
\]
This is true if and only if $\b\in\bfmu_3$ and $\g=\b^2$, i.e., if 
the map~$\f$ is in the subgroup of order~$3$ that we already know is
contained in~$\Aut(f)$.

Next we conjugate by the composition of a diagonal map and the
transposition $[X,Z,Y]$, i.e., by a map of the form~$\f=[X,\b Z,\g Y]$. Then
\[
  f_{b,d,g}^\f(X,Y,Z) =
  [ X^2 + \b \g bYZ, \b^{2}\g^{-1} Z^2 +  g XY, \b^{-1}\g^{2} Y^2 +   d XZ].
\]
The map~$f_{b,d,g}^\f$ is thus in normalized form if and only if
\[
  [ \b \g, \b^2\g^{-1}, \b^{-1}\g^2, 1, g, d ] = [1,1,1,1,d,g].
\]
The first four coordinates force $\b\in\bfmu_3$ and $\g=\b^2$, as
expected.  The last three coordinates force $d=g$, so we find
that~$f_{b,d,g}$ and~$f_{b,g,d}$ are $\PGL_3$-conjugates, and if
$d=g$, then $\Aut(f_{b,d,g})$ contains the transposition~$[X,Z,Y]$, so
is isomorphic to~$C_3\rtimes C_2$.

\par\noindent\framebox{$\boldsymbol{abcdeg\ne0}$ \textbf{and}  $\boldsymbol{bdg=8ace}$}
\par
We want to make a change of variables so that $b=2a$, $d=2c$ and $g=2e$.
Conjugating by $[uX,vY,wZ]$ with
\[
  u=(adg)^{-1/3},\quad v=(bde)^{-1/3},\quad w=(bcg)^{-1/3}
\]
puts~$f$ into this form, i.e.,
\begin{equation}
  \label{eqn:fXYZaX22YZ}
  f(X,Y,Z) = \bigl[ a(X^2+2YZ), c(Z^2+2XY), e(Y^2+2XZ) \bigr].
\end{equation}
N.B. This only works because of our assumption that $bdg=8ace$. So
we are reduced to studying~$f=f_{a,c,e}$ in this normalized form.
The critical locus is
\[
  \Crit(f_{a,c,e}) = \{X^3+Y^2+Z^3-3XYZ=0\}.
\]
The critical locus decomposes as a union of three lines via the
factorization
\[
  X^3+Y^2+Z^3-3XYZ = 
  (  X + Y+ Z)(  X+\z  Y+\z^2  Z)(  X+\z^2  Y + \z  Z).
\]
The pairwise intersections of these three lines in $\Crit(f_{a,c,e})$
gives the following set of three points,
\[
  \bigl\{[1, 1, 1],  [1,\z , \z^2 ],  [1,\z^2 , \z ]\bigr\}.
\]
Every $\f\in\Hom(f,f')$ stabilizes this set, so has the form
\begin{align*}
    \f&=
    \SmallMatrix{1&1&1\\ 1&\z &\z^2 \\ 1 & \z^2  & \z  \\ }
    \SmallMatrix{1&0&0\\ 0&\b&0\\ 0&0&\g} \pi
    \SmallMatrix{1&1&1\\ 1&\z^2 &\z \\ 1 & \z  & \z^2  \\ } \in \PGL_3
\end{align*}
for some $\b,\g\in K^*$ and some permutation $\pi\in\Scal_3$. 

Just as in the previous case, for each $\pi\in\Scal_3$ we compute
$f^\f$ and pick out the coefficients of the~$12$ monomials that do not
appear in~$f'$. Each of those coefficients is a linear combination
of~$a,c,e$, with coefficients that are polynomials in~$\b$
and~$\g$. Just as in~\eqref{eqn:coefMABG}, we accumulate this data in
the form
\begin{equation} 
  \label{eqn:coefMace}
  \SmallMatrix{
    \text{coeff of $Y^2$ in $X$-coord of $f^\f$} \\
    \text{coeff of $Z^2$ in $X$-coord of $f^\f$} \\
    \vdots \\
    \text{coeff of $XY$ in $Z$-coord of $f^\f$} \\
    \text{coeff of $YZ$ in $Z$-coord of $f^\f$} \\
  }
  = M_\pi
  \SmallMatrix{a\\c\\e\\},
\end{equation}
where~$M_\pi$ is a $12$-by-$3$ matrix whose entries are polynomials in
the ring $\QQ(\z)[\b,\g]$.  (At times we may write~$M_\pi(\b,\g)$ to
indicate the dependence of~$M_\pi$ on~$\b$ and~$\g$.)  The fact
that~$f^\f$ is not allowed to have any of the indicated monomials
implies that~\eqref{eqn:coefMace} is the zero vector, and then our
assumption that~$a,c,e$ do not vanish implies that the matrix~$M_\pi$
has rank at most~$2$. (Indeed, it implies the far stronger statement
that the column null space of~$M_\pi$ contains a vector whose
coordinates are all non-zero.)

There are only~$6$ choices for~$\pi\in\Scal$, which we compute in
turn.  For each~$\pi$ we computed the determinants of the $3$-by-$3$
minors of~$M_\pi$. Of these~$220$ minors, exactly~$160$ have non-zero
determinant, and aside from factors of the form~$c\b^i\g^j$ with
$c\in\ZZ$, these~$160$ non-zero determinants yield exactly~$8$
distinct polynomials in~$\QQ(\z_3)[\b,\g]$. Taking pairwise resultants to
eliminate~$\b$ (respectively~$\g$) and then pairwise gcds, we find
that a necessary condition for $\rank M_\pi(\b,\g)\le2$ is that~$\b$
and~$\g$ satisfy
\[
  \b^7 = \g^7 = 1.
\]
For each such pair~$(\b,\g)$, we compute the null space
of $M_\pi(\b,\g)$ and use it to find all maps~$f$ of the
form~\eqref{eqn:fXYZaX22YZ} and all~$\f$ such that~$f^\f$ has the same
form.

For example, taking $\b=\g=1$ gives $M_\pi(1,1)=0$, so we obtain
maps~$\f$ that are allowed for every~$f$. More precisely, the
elements~$\pi\in\Scal_3$ of order dividing~$3$ give the three maps
\[
  [X,Y,Z], [X,\z Y,\z^2 Z], [X,\z^2 Y, \z Z] \in \Aut(f)
\]
that we already know are in $\Aut(f)$, while the three
elements~$\pi\in\Scal_3$ of order~$2$ give maps~$\f$ satisfying
$f^\f_{a,c,e}=f_{a,e,c}$.

More interesting are the cases for which $\b=\z_7$ is a primitive
$7$'th root of unity. Writing~$\z_3$ instead of~$\z$ for our chosen
primitive cube root of unity, we find that $\b=\z_7$ is possible in
exactly the following situations:
\begin{align*}
  \g=\z_7^3 &\quad\text{and}\quad [a,c,e] = [1,\z_3,\z_3^2], \\
  \g=\z_7^5 &\quad\text{and}\quad [a,c,e] = [1,\z_3^2,\z_3].
\end{align*}     
Since we know how to swap~$c$ and~$e$, it suffices to consider the first case,
for which we find that~$\Aut(f)$ contains the following element of order~$7$:
\[
  \def\zz{\z^{\vphantom1}}
  \SmallMatrix{\z_7^3 + \zz_7 + 1 & \zz_3 \z_7^3 - \z_3^2 \zz_7 + 1 & \z_3^2 \z_7^3 + \zz_3 \zz_7 + 1\\
    \z_3^2 \z_7^3 + \zz_3 \zz_7 + 1 & \z_7^3 + \zz_7 + 1 & \zz_3 \z_7^3 - \z_3^2 \zz_7 + 1\\
    \zz_3 \z_7^3 - \z_3^2 \zz_7 + 1 & \z_3^2 \z_7^3 + \zz_3 \zz_7 + 1 & \z_7^3 + \zz_7 + 1}
  \in \Aut\bigl(f_{1,\zz_3,\z_3^2}\bigr).
\]
Of course, $\Aut(f)$ also contains the powers of this map, and
composing with one of the transpositions in~$\Scal_3$ gives a map in
$\Hom\bigl(f_{1,\z^{\vphantom1}_3,\z_3^2},f_{1,\z_3^2,\z^{\vphantom1}_3}\bigr)$.

We note that these two maps~$f$ for which~$\Aut(f)$ contains an element of
order~$7$ are both conjugate to the map~$[Z^2,X^2,Y^2]$ that we
studied in Propositions~\ref{proposition:disteps}
and~\ref{proposition:C5C7etc}. Indeed, one finds that
$\f = \SmallMatrix{1&\z_3^2&\z_3\\ 1&\z_3&\z_3^2\\ 1&1&1\\ }$ gives
\[
  [Z^2,X^2,Y^2]^\f = [X^2+2YZ,\z_3 (Z^2+2 XY), \z_3^2( Y^2 + 2 XZ)].
\]
So we can refer to Proposition~\ref{proposition:C5C7etc} for the fact that
these maps have automorphism group exactly equal to~$C_7\rtimes C_3$.

It remains to check that maps of Types~$C_3(n)$ and~$C_3(n')$ are not
$N(\Gcal_3)$-conjugate for $n\ne n'$, nor indeed are
they~$\PGL_3$-conjugate.  To do this, we note that the only case in
which the indeterminacy and critical loci for~$C_3(n)$ and~$C_3(n')$
have the same geometry is
\[
  C_3(2) \stackrel{?}{=} C_3(3), \quad I(f)=\text{1 point},\quad\Crit(f)=\text{triple line}.
\]
Any $\f\in\Hom(f,f')$ preserves the geometry of these loci, so we ask if the maps
\[
  f = [aX^2+YZ,XY,Y^2]\quad\text{and}\quad f'=[bYZ,Z^2+XY,Y^2]
\]
can be conjugate to one another.  Since $\Crit(f)=\Crit(f')$ is the
line $Y=0$ (with multiplicity~$3$), we must have
$\f\bigl(\{Y=0\}\bigr)=\{Y=0\}$. But we observe that
\begin{align*}
  f\bigl(\Crit(f)\bigr) &= f\bigl(\{Y=0\}\bigr) = \bigl\{[1,0,0]\bigr\} \in \Crit(f),\\
  f'\bigl(\Crit(f')\bigr) &= f'\bigl(\{Y=0\}\bigr) = \bigl\{[0,1,0]\bigr\} \notin \Crit(f').
\end{align*}
Hence if there were a map~$\f\in\Hom(f,f')$, then we would find that
\begin{align*}
  f'\bigl(\Crit(f')\bigr)  &= f^\f\bigl(\Crit(f^\f)\bigr)  = f^\f\bigl(\Crit(f)^\f\bigr) \\
  &= \Bigl(f\bigl(\Crit(f)\bigr)\Bigr)^\f  \in \Crit(f)^\f  = \Crit(f^\f)  = \Crit(f').
\end{align*}
This contradiction shows that~$f$ and~$f'$ are not~$\PGL_3(K)$-conjugate.
\end{proof}

\begin{proof}[Proof of Proposition $\ref{proposition:C4notpossible}$]
The map $[X,\z Y,Z]$ is the map~$\t_0$ defined in
Section~\ref{section:diagstability}. Setting $m=0$ in Table~\ref{table:effectoftm}
and reducing the entries modulo~$3$ yields
{\small
\[
\begin{array}{|c||c|c|c|c|c|c|} \hline
  & X^2 & Y^2 & Z^2 & XY & XZ & YZ \\ \hline\hline
  {\text{$X$-coord}}
  & 0 & 2 & 0 & 1 & 0 & 1 \\ \hline
  {\text{$Y$-coord}}
  & 2 & 1 & 2 & 0 & 2 & 0 \\ \hline
  {\text{$Z$-coord}}
  & 0 & 2 & 0 & 1 & 0 & 1 \\ \hline 
\end{array}
\]
}%
Hence assuming that~$\t_0\in\Aut(f)$ leads to the following three families of maps:
\begin{align*}
  f_{0,0} &:= [aX^2+bZ^2+cXZ,dXY+eYZ,gZ^2+hXZ], \\
  f_{0,1} &:= [aXY+bYZ,cY^2,dXY+eYZ], \\
  f_{0,2} &:= [aY^2,bX^2+cZ^2+dXZ,eY^2].
\end{align*}


For the first two maps we use Table~\ref{table:effectofL} to compute
\begin{align*}
  \mu^{\Ocal(1)}(f_{0,0},L_{k,\ell})
  &\le \bigl\{ -k, 3k+2\ell, k+\ell \}
  \xrightarrow{(k,\ell)=(1,-2)} -1, \\
  \mu^{\Ocal(1)}(f_{0,1},L_{k,\ell})
  &\le \bigl\{ -\ell, -2k, -2k-2\ell \}
  \xrightarrow{(k,\ell)=(1,1)} -1.
\end{align*}
Hence~$f_{0,0}$ and~$f_{0,1}$ are $\Dcal$-unstable. (The latter also
has degree~$1$, of course.)  And finally, we see that the map
$f_{0,2}$ is not dominant, since its image is contained in the line
$\{eX=aZ\}$.  (Or, if $a=e=0$, then $f_{0,2}(\PP^2)=[0,1,0]$.) 
\end{proof}

\section{Proof of Theorems~\ref{theorem:main1} and~\ref{theorem:main2}
    and of Corollary~\ref{corollary:listofaut}}
\label{section:endproof}

In this and the next section, we use our accumulated results to prove
Theorems~\ref{theorem:main1} and~\ref{theorem:main2} and
Corollary~\ref{corollary:listofaut}. We recall that the assumptions
for both the theorem and corollary are that
$f:\PP^2\dashrightarrow\PP^2$ is a dominant rational map of degree~$2$
lying in the semi-stable locus of~$\Rat_2^2$ and such that
$\infty>\#\Aut(f)\ge3$.

We remark that the computation of the indeterminacy and critical loci
of maps in the various families as described in
Table~\ref{table:maintable} is an elementary, albeit tedious,
calculation, so in some cases we have omitted the details.

We start with the assumption that~$\Aut(f)$ is finite and of order at
least~$3$. Let~$G$ be a finite group of order at least~$3$.  Then
either $p\mid\#G$ for some odd prime $p$, or else $G$ is a 2-group of
order at least~$4$. In the former case, Cauchy's theorem says that $G$
contains an element of order $p$, while in the latter case, the strong form
of the first Sylow theorem~\cite[Theroem~2.12.1]{MR1375019} says that
$G$ contains a subgroup of order~$4$, hence contains a copy of either
$C_2^2$ or $C_4$.

Suppose that there is an odd prime~$p$ with $p\mid \Aut(f)$, and let
$G\subset\Aut(f)$ be a subgroup of
order~$p$. Lemma~\ref{lemma:GinPGL3}(a) tells us that
after~$\PGL_3(K)$-conjugation, we may assume that
\[
  G = \left\< \SmallMatrix{1&0&0\\0&\z&0\\0&0&\z^m}\right\>
\]
for some primitive $p$'th root of unity~$\z$ and some integer~$m$.
For future reference, we also remark that when $m=1$, we may instead
use the map $\SmallMatrix{1&0&0\\0&\z&0\\0&0&1}$, which is
$\PGL_3$-conjugate to $\SmallMatrix{1&0&0\\0&\z&0\\0&0&\z}$.

We consider the case that $p\ge5$.  Then
Proposition~\ref{proposition:disteps} and the semi-stability
assumption tell us that~$f$ has one of the following three forms:
\[
  [aX^2+YZ,bXY,cXZ],\quad [YZ,X^2,Y^2],\quad [Z^2,X^2,Y^2].
\]
Proposition~\ref{proposition:C5C7etc} says that maps~$f$ in the first
family have a copy of~$\GG_m$ in~$\Aut(f)$, while the second map
satisfies $\Aut(f)\cong C_5$, and the third map satisfies
$\Aut(f)\cong C_7\rtimes C_3$. (A conjugate of this last case also
appears in the compendium of maps for which~$\Aut(f)$ contains an
element of order~$3$.)  This completes the proof of the automorphism
parts of Theorems~\ref{theorem:main1} and~\ref{theorem:main2} and of
Corollary~\ref{corollary:listofaut} in the case that~$\#\Aut(f)$ is
divisible by a prime $p\ge5$.

Similarly, if $p=3$, i.e., if $3\mid\#\Aut(f)$, then the automorphism
parts of Theorems~\ref{theorem:main1} and~\ref{theorem:main2} and of
Corollary~\ref{corollary:listofaut} can be deduced from
Proposition~\ref{proposition:autc3types}, although we need to do
some work to put the maps into the indicated forms. (Using the above
remark and Proposition~\ref{proposition:autc3types}(b), we need only
consider the case~$m=2$.)

Thus let $f=[aX^2+bYZ,cZ^2+dXY,eY^2+gXZ]$.  If $ace\ne0$, then the
transformation
\[
  \psi(X,Y,Z) = \bigl[ (ce)^{2/3}X, ac^{1/3}Y, ae^{1/3}Z \bigr]
\]
yields
\[
  f^\psi(X,Y,Z) 
  = [ X^2 + ab(ce)^{-1} YZ, Z^2 + a^{-1} d XY, Y^2 + a^{-1}g XZ ].
\]
In other words, if $ace\ne0$, we can find a normal form for~$f$ with
$a=c=e=1$.  We apply this transformation to the maps of Type~$C_3(1)$,
$C_3(5)$, $C_3(6)$, $C_3(7)$, and $C_3(8)$ in
Table~\ref{table:autord3}, which serves to give them a more uniform
description. Types~$C_3(1)$ and~$C_3(7)$ already have this form.

For~$C_3(5)$ we find that
\[
  f^\psi(X,Y,Z) = [X^2 - a^2b^{-1}YZ, Z^2 - a^{-1}XY, Y^2 - a^{-1}b XZ].
\]
Thus
\[
  f^\psi(X,Y,Z) = [X^2 + B YZ, Z^2 + D XY, Y^2 + G XZ]
  \quad\text{with $BDG= -1$.}
\]
Further, the conditions that~$a$ and/or $b$ equal~$1$ become
\begin{align*}
  a=b=1 &\Longleftrightarrow D=G= -1, &
  a=1,\;b\ne1 &\Longleftrightarrow D= -1\ne G, \\
  a\ne1,\;b=1 &\Longleftrightarrow D=G\ne -1, &
  a\ne1,\;b\ne1 &\Longleftrightarrow D\ne -1,\;D\ne G
\end{align*}

For $C_3(6)$ we find that
\[
  f^\psi(X,Y,Z) = [X^2 + ab YZ, Z^2 + a^{-1} XY, Y^2 ].
\]
So with a slight relabeling, Type~$C_3(6)$ becomes simply
\[
  f^\psi(X,Y,Z) = [X^2 + b YZ, Z^2 + d XY, Y^2 ],
\]
i.e., it's~$C_3(7)$ with $g=0$.

And for $C_3(8)$, we find that
\[
  f^\psi(X,Y,Z) = [ X^2 + 2(ce)^{-1} YZ, Z^2 +  2c XY, Y^2 + 2e XZ ].
\]
In other words, we obtain the~$C_3(7)$ form with $bdg=8$. Further, the
only cases with $\Aut(f)\ne C_3$ are $c$ a primitive cube root
of unity and $e=c^2$, with these two cases being conjugate. In
particular, $ce=1$.

We next consider Types~$C_3(2)$,~$C_3(3)$, and~$C_3(4)$. We claim that
they may all be put into the form
$f_{a,c,g}:=[aX^2+YZ,cZ^2+XY,Y^2+gXZ]$ with $(a,b,g)\ne(0,0,0)$ and
one or more of~$a,c,g$ equal to~$0$.  For Type~$C_3(2)$, we already
have $f=f_{a,0,g}$.  For Types~$C_3(3)$ and~$C_3(4)$, which have the
form~$f=[bYZ,Z^2+XY,Y^2+gXZ]$ with $b\ne0$, the transformation
$\psi=[b^{2/3}X,b^{1/3}Y,Z]$ yields
$f^\psi=[YZ,b^{-1}Z^2+XY,Y^2+gXZ]$, so we get maps~$f_{0,c,g}$
with~$c\ne0$.  We also observe that the $\PGL_3$-conjugacies for
Type~$C_3(4)$ become the maps $\f=[c^{1/3}X,c^{2/3}g^{1/3}Y,g^{2/3}Z]$
which have the effect $f_{0,c,g}^\f=f_{0,c/g,1/g}$.

We next consider the maps such that~$\Aut(f)$ contains an element of
order~$4$. The description of these maps in
Proposition~\ref{proposition:autc4types} is already in the form that we want.

Finally we consider the maps such that~$\Aut(f)$ contains a subgroup
of type~$C_2^2$. To fit these maps, which are described in
Proposition~\ref{proposition:classifyC2C2}, into a single family, we
apply the transformation $\psi=[X/\sqrt{d},Y,Z]$ to the map
$f=[X^2+Y^2-Z^2,dXY,eXZ]$. This gives
$f^\psi=[d^{-1}X^2+Y^2-Z^2,XY,ed^{-1}XZ]$, so these maps have the form
$f_{a,e}=[aX^2+Y^2-Z^2,XY,eXZ]$. Then one family in
Proposition~\ref{proposition:classifyC2C2} is~$f_{a,e}$ with $a\ne0$
and the other family is~$f_{0,e}$.

We conclude this section with the one part of
Corollary~\ref{corollary:listofaut} that is not immediate from the
main theorems.  Corollary~\ref{corollary:listofaut}(b) asserts that
each group listed in~(a) occurs as the full automorphism group of a
semistable map of degree~$2$. Theorems~\ref{theorem:main1}
and~\ref{theorem:main2}, together with appropriately chosen maps from
Table~\ref{table:maintable}, take care of $G\in\{C_3,C_4,C_5,C_2^2\}$.
That same table gives us maps with $\Aut(f)\in\{S_3,S_4,C_7\rtimes C_3\}$.
For these groups it suffices to point out that the identity component
of the normalizers of the associated subgroups of~$\PGL_3$ is in all
cases is equal to the group of diagonal matrices, and hence
semistability for (say)~$\Gcal_3$ or~$\Gcal_7$ gives semistability for
the larger group. It remains to deal with the cases~$G=C_1$
and~$G=C_2$.

Consider the family of maps
\[
  f_{u,v} = [YZ,X^2, u XY + v Y^2].
\]
Then $\Aut(f_{1,0})\cong C_4$ and $\Aut(f_{0,1})\cong C_5$. Hence the generic
member of this family has $\Aut(f_{u,v})=C_1$.

Similarly, consider the family of maps
\[
  f_{u,v,w} = [ u Y^2 + v Z^2, XY, w Y^2 + 2XZ ].
\]
Then $\Aut(f_{0,1,1})\cong C_4$ and $\Aut(f_{1,-1,0})\cong C_2^2$, so
generically~$\Aut(f_{u,v,w})$ is either~$C_1$ or~$C_2$.  Since
$\f=[X,-Y,Z]\in\Aut(f_{u,v,w})$, we conclude that a generic map in
the family satisfies~$\Aut(f_{u,v,w})\cong C_2$.

\section{Computation of Dynamical and Topological Degrees}
\label{section:dyntopdeg}
In this section we compute the dynamical and topological degrees of
the various maps in Table~\ref{table:maintable}.  The following
elementary results will be useful, especially in establishing that a
map is algebraically stable, i.e., satisfies $\l_1(f)=\deg(f)$.
Results of this sort, and much more, appear in~\cite{MR1369137}, but for the
convenience of the reader, we give the short proofs.

\begin{lemma}
\label{lemma:CinCrita}
Let $f:\PP^2\dashrightarrow\PP^2$ be a dominant rational map.
\begin{parts}
\Part{(a)}
If~$f$ is a morphism, then
\[
  \l_1(f)=\deg(f)\quad\text{and}\quad\l_2(f)=\deg(f)^2.
\]
\Part{(b)}
$\l_1(f)<\deg(f)$ if and only if there is a curve $\G\subset\PP^2$
and an integer $n\ge1$ such that $f^n(\G)\subseteq I(f)$.
\Part{(c)}
Let~$\G\subset\PP^2$ be a curve such that~$f(\G)$ consists of a
single point. Then $\G\subseteq\Crit(f)$.
\end{parts}  
\end{lemma}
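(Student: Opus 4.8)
The plan is to treat the three parts in the order (a), (c), (b), since the Jacobian computation behind~(c) is the cleanest and part~(b) carries the real content. For part~(a): if $f=[F_0,F_1,F_2]$ is a morphism of degree~$d$ then $I(f)=\emptyset$, so the $F_i$ have no common zero; hence for any dominant rational $g=[G_0,G_1,G_2]$ the forms $F_i(G_0,G_1,G_2)$ of degree $d\deg(g)$ have no common factor, since such a factor would cut out a curve whose $g$-image lies in the empty set $I(f)$. Thus $\deg(f\circ g)=d\deg(g)$, and inducting with $g=f^{n-1}$ gives $\deg(f^n)=d^n$, so $\l_1(f)=\lim_n(d^n)^{1/n}=d=\deg(f)$. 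For $\l_2(f)$: for a generic point $P$ and generic lines $L_1,L_2$ through $P$, the preimages $f^{-1}(L_1)$ and $f^{-1}(L_2)$ are plane curves of degree~$d$ with no common component (a common component would be contracted by $f$ to $P$, and $f$ contracts only finitely many curves), so by B\'ezout $f^{-1}(P)=f^{-1}(L_1)\cap f^{-1}(L_2)$ has length $d^2$; generic smoothness in characteristic~$0$ makes this scheme reduced, giving $\l_2(f)=d^2=\deg(f)^2$.

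For part~(c): let $F=(F_0,\dots,F_n)\colon\AA^{n+1}\to\AA^{n+1}$ be a homogeneous lift of $f$, so that $\Crit(f)$ is the image in $\PP^n$ of the hypersurface $\{\det(\partial F_i/\partial x_j)=0\}$. Let $\widehat{\G}\subset\AA^{n+1}$ be the affine cone over $\G$, a surface, and let $\widehat{q}$ be the line over the point $q$ with $f(\G)=\{q\}$. Over $\G\setminus I(f)$ we have $F(\widehat{\G})\subseteq\widehat{q}$, and since $\widehat{q}$ is closed this holds on all of $\widehat{\G}$. At a point $p$ that is smooth on $\widehat{\G}$ and lies in this open locus, the differential $dF_p$ sends the $2$-dimensional space $T_p\widehat{\G}$ into the $1$-dimensional space $T_{F(p)}\widehat{q}$, so $dF_p$ has nonzero kernel and $\det(\partial F_i/\partial x_j)(p)=0$. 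Such $p$ are Zariski dense in $\widehat{\G}$ (argue component by component if $\G$ is reducible), so the Jacobian vanishes identically on $\widehat{\G}$, i.e.\ $\G\subseteq\Crit(f)$.

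For part~(b), the key is a composition/cancellation dichotomy: for dominant rational maps $g,h\colon\PP^2\dashrightarrow\PP^2$ one has $\deg(g\circ h)=\deg(g)\deg(h)$ unless $h$ contracts some irreducible curve $C$ to a point of $I(g)$. I would prove this by lifting to homogeneous forms and setting $\phi=\gcd\bigl(g_0(h_0,h_1,h_2),\,g_1(h_0,h_1,h_2),\,g_2(h_0,h_1,h_2)\bigr)$, so $\deg(g\circ h)=\deg(g)\deg(h)-\deg\phi$; then $\deg\phi>0$ exactly when $\{\phi=0\}$ is a nonempty curve, and for an irreducible component $C$ of it every point of $C\setminus I(h)$ is sent by $h$ to a common zero of the $g_i$, so (taking Zariski closure, using finiteness of $I(g)$ and irreducibility of $C$) $h(C)$ is a single point of $I(g)$; conversely, if $h$ contracts $C=\{\psi=0\}$ to $p\in I(g)$, then each $g_i(h_0,h_1,h_2)$ vanishes on $C$, so $\psi\mid\phi$ and the degree drops. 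Granting this, part~(b) follows: since $\deg(f^{m+n})\le\deg(f^m)\deg(f^n)$, the sequence $\log\deg(f^n)$ is subadditive and $\l_1(f)=\inf_n(\deg f^n)^{1/n}$, so $\l_1(f)<\deg(f)$ iff $\deg(f^n)<\deg(f)^n$ for some $n$; taking the least such $n$ (necessarily $n\ge2$, with $\deg(f^{n-1})=\deg(f)^{n-1}$), the inequality $\deg(f^n)<\deg(f)\deg(f^{n-1})$ and the dichotomy applied to $g=f$, $h=f^{n-1}$ produce a curve $\G$ with $f^{n-1}(\G)\subseteq I(f)$; conversely, if $f^m(\G)\subseteq I(f)$ for some $m\ge1$, then after replacing $\G$ by an irreducible component and applying the dichotomy to $g=f$, $h=f^m$ one gets $\deg(f^{m+1})<\deg(f)\deg(f^m)\le\deg(f)^{m+1}$, hence $\l_1(f)<\deg(f)$.

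The main obstacle will be the bookkeeping in part~(b): one must be careful that ``$h$ contracts $C$'' refers to the closure of the image of $C$ minus its finitely many indeterminacy points (since $h$ is only rational), that the forms $g_i(h_0,h_1,h_2)$ divided by $\phi$ have no further common factor so that the degree formula is exact, and that in the iterated statement one consistently works with the reduced representative of each $f^n$ at each stage. Parts~(a) and~(c) are then routine.
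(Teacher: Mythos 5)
Your proof is correct and follows essentially the same route as the paper: part (c) is the identical chain-rule/differential argument (you phrase it on the affine cone, the paper on an embedding $j:\G\hookrightarrow\PP^2$), and part (b) rests on the same equivalence between a degree drop under composition, a common factor of the composed forms, and a curve contracted into the indeterminacy locus, which the paper asserts in one line for the unreduced $n$-fold iterates and you prove carefully via the single-composition gcd dichotomy plus Fekete subadditivity. Your part (a), including the B\'ezout computation of $\l_2$, fills in what the paper dismisses as ``standard and elementary.''
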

\begin{proof}
(a)
This is standard and elementary.  
\par\noindent(b)\enspace
Let
\[
  f(X,Y,Z)=[F_1(X,Y,Z),G_1(X,Y,Z),H_1(X,Y,Z)],
\]
and define inductively  
\[
  [F_{n+1},G_{n+1},H_{n+1}] = 
  [F_n(F_1,G_1,H_1),G_n(F_1,G_1,H_1),H_n(F_1,G_1,H_1)].
\]
Then
\[
 f^n(X,Y,Z)=[F_n(X,Y,Z),G_n(X,Y,Z),H_n(X,Y,Z)],
\]
so $\l_1(f)<\deg(f)$ if and only if there exists an~$n$ such that
$F_n,G_n,H_n$ have a non-trivial common factor in~$K[X,Y,Z]$.  Taking
the smallest such~$n$, if~$R(X,Y,Z)$ is the common factor, then the
curve~$\G=\{R=0\}$ satisfies $f^{n-1}(\G)\subset I(f)$.
\par\noindent(c)\enspace This is just the chain rule.  We view~$\G$ as
an abstract curve with an embedding $j:\G\hookrightarrow\PP^2$.
Differentiating the constant function $f\circ j$ gives
$(Df\circ j)\cdot \nabla j = 0$. The fact that~$j$ is non-constant,
i.e.,~$\G$ is a curve, tells us that~$\nabla j(t)\ne\bfzero$ for all
but finitely many $t\in \G$, and hence that $\det
Df\bigl(j(t)\bigr)=0$ for all but finitely many $t\in \G$.  Taking
Zariski closures gives $\G=\Image(j)\subset \{\det Df=0\}=\Crit(f)$.
\end{proof}

The remainder of this section is devoted to computing the dynamical and topological
degrees of the maps in Table~\ref{table:maintable}.

\paragraph{\textbf{Types 1.1,\thinspace1.3,\thinspace1.5,\thinspace1.7.1.8,\thinspace3.4,\thinspace8.1}}
\leavevmode\newline
These maps are morphisms, so Lemma~\ref{lemma:CinCrita}
gives $\l_1(f)=2$ and $\l_2(f)=4$.

\paragraph{\textbf{Types 1.2}
  $\boldsymbol{(C_3)}:\;f=[X^2-YZ,Z^2-XY,Y^2-XZ]$}
\leavevmode\newline
A short calculation shows that $f^2(X,Y,Z)=[X,Y,Z]$, so~$\deg(f^n)$
alternates between~$2$ and~$1$. In particular, $\l_1(f)=\l_2(f)=1$.

\paragraph{\textbf{Types 1.4,\thinspace1.6}
  $\boldsymbol{(\Gcal_3)}:\;f=[X^2+bYZ,Z^2+dXY,Y^2+gXZ],\; bdg=-1$}
\leavevmode\newline
The critical locus has the equation $dg X^3 + bd Y^3+bg Z^3 - 3 XYZ=0$.
Using the assumption that~$bdg=-1$, we find that the critical
locus is the union of the three lines
\[
  L_k = \{(dg)^{1/3} X + \z_3^k (bd)^{1/3}Y + \z_3^{2k} (bg)^{1/3}Z=0\},
  \quad\text{$k=0,1,2$,}
\]
where~$\z_3$ is a primitive cube root of unity. 
We compute
\[
  f(L_k) = \bigl\{ [b^{2/3}, \z_3^k d^{2/3}, \z_3^{2k} g^{2/3}] \bigr\}
  \quad\text{for $k=0,1,2$.}
\]
In particular,~$f\bigl(\Crit(f)\bigr)$ consists of three points.
The indeterminacy locus of~$f$ is
\[
  I(f) = \bigl\{ [ b^{1/3}, g^{1/3}, d^{1/3} ],
  [ b^{1/3}, \z_3 g^{1/3}, \z_3^2 d^{1/3} ],
  [ b^{1/3}, \z_3^2 g^{1/3}, \z_3 d^{1/3} ] \bigr\},
\]
where~$b^{1/3}$ and~$d^{1/3}$ are arbitrary fixed cube roots of~$b$
and~$d$, and then~$g^{1/3}$ is set equal to~$-1/b^{1/3}d^{1/3}$.  Thus
for generic values of~$b,d,g$ (satisfying $bdg=-1$), the orbits of the
three points~$f(L_1),f(L_2),f(L_3)$ will not hit~$I(f)$, so
generically we have~$\l_1(f)=2$. However, we expect that there is a
countable set of  values of~$b,d,g$ such that~$\l_1(f)<2$. To see why, we observe that
for each $n\ge1$, the equation
\[
  f^n(L_0)= f^{n-1}(b^{2/3},d^{2/3},g^{2/3}) =[b^{1/3},g^{1/3},d^{1/3}] \in I(f)
\]
yields two homogeneous polynomial equations for the point
$[b^{1/3},g^{1/3},d^{1/3}]$.  Substituting the inhomogeneous value
$g^{1/3}=1/b^{1/3}d^{1/3}$ yields two inhomogeneous polynomials
equations for~$(b^{1/3},d^{1/3})$, and hence a finite number of values
for~$(b,d,g)$. Varying~$n$ should then yield a countable number of
exceptional values of~$(b,d,g)$ with~$f^n(L_0)\in I(f)$, and thus
with~$\l_1(f)<2$. This applies to Type~1.6, for which~$b,d,g$ satisfy
the single relation~$bdg=-1$. For maps of Type~1.4 with
$(b,d,g)=(b,b^{-1},-1)$, there is only one degree of freedom, so it
seems plausible that for these maps we have~$\l_1(f)=2$.

In order to compute the topological degree, we set $f(X,Y,Z)=[\a,\b,1]$
for generic~$\a,\b$ and solve for~$[X,Y,Z]$. This gives two equations
\[
  X^2 + b YZ = \a(Y^2+gXZ),\quad
  Z^2+dXY = \b(Y^2+gXZ).
\]
We dehomogenize $x=X/Z$ and $y=Y/Z$. Then we can solve the second equation
for~$x$ and substitute into the first equation to find
\[
  ( \b ^2-d^2 \a ) b^2d^2 y^4 + (d^2 b^2 - \b \a ) b d^2 y^3
  + ( \b ^2 - d^2 \a ) b y + (d^2 b^2 - \a \b ) = 0.
\]
The discriminant of this quartic equation is a mess, but part of it looks like
\[
  \Disc(f) = -27   b^8d^4 (\b^6 - b^4d^8)^2
         + \a \cdot \bigl(\text{polynomial in $\ZZ[b,d,\a,\b]$} \bigr).
\]
In particular, since $bdg=-1$, we see that for generic~$\a,\b$, the
quartic has distinct roots. If those roots lead to points not
in~$I(f)$, which we expect to be true for most~$(b,d,g)$ triples,
then~$\l_2(f)=\#f^{-1}(\a,\b)=4$. On the other hand, the general
inequality $\l_2\le\l_1^2$ shows that we should expect $\l_2<4$ for
countably many~$(b,d,g)$.

We illustrate with the extreme case $b=d=g=-1$, which is the map of
Type~1.2. In that case, the discriminant quartic factors (essentially)
as
\[
  (y^3-1) \left(y - \frac{1-\a\b}{\b^2-\a}\right).
\]
The three roots with~$y\in\bfmu_3$ lead to points in~$I(f)$, so we
find that $\#f^{-1}(\a,\b)=1$, which confirms our earlier computation.

\paragraph{\textbf{Type 2.1}
  $\boldsymbol{(\Gcal_3)}:\;f=[YZ,XY,Y^2+gXZ],\; g\ne0$}
\leavevmode\newline
Here $\Crit(f)$ is a line and a conic and~$\#I(f)=2$,
\[
\Crit(f) = \{Y=0\} \cup \{Y^2=gXZ\},\quad
  I(f) = \bigl\{ [0,0,1], [1,0,0] \bigr\}.
\]
We have $f\bigl(\{Y=0\}\bigr)=[0,0,1]\in I(f)$, so~$\l_1(f)<2$.  The
degree sequence of~$f$ is $2,3,5,8,13,21,34,\ldots$, which suggests
that~$\deg(f^n)$ is the $(n+2)$'nd Fibonacci number.

We dehomogenize $Y=1$, so $f(x,z)=\bigl(z/x,(1+gxz)/x\bigr)$.
Setting $f(x,z)=(\a,\b)$ with generic~$\a,\b$, we find
that $z=\a x$ and $\a gx^2-\b x+1=0$. Since $g\ne0$,
we have $\l_2(f)=\#f^{-1}(\a,\b)=2$.

\paragraph{\textbf{Types 2.2,\thinspace2.6}
  $\boldsymbol{(\Gcal_3)}:\;f=[YZ, cZ^2+XY, Y^2+gXZ],\; cg\ne0$}
\leavevmode\newline
The critical locus is a nodal cubic that never gets mapped to a point,
so~$\l_1(f)=2$. Dehomogenizing with respect to~$X$ and setting
$\bigl((cz^2+y)/yz, (y^2+gz)/yz \bigr)=(\a,\b)$ with generic~$\a,\b$ leads
to $y=cz^2/(\a z-1)$ and
$c(c-\a\b)z^3+(c\b-g\a^2)z^2 + 2\a g z - g = 0$.  (Note that $z=0$ is
not a valid value.) Hence $\l_2(f)=3$.

\paragraph{\textbf{Types 2.3}
  $\boldsymbol{(\Gcal_3)}:\;f=[YZ, cZ^2+XY, Y^2],\; c\ne0$}
\leavevmode\newline
Here $I(f)=\bigl\{[1,0,0]\bigr\}$ and $\Crit(f)=3\cdot\{Y=0\}$.
We have
\[
  \{Y=0\} \xrightarrow{\;f\;} [0,1,0]  \xrightarrow{\;f\;} [0,0,1]  \xrightarrow{\;f\;} [0,1,0],
\]
so although the critical locus maps to a point, that point is part of
a 2-cycle, so it never hits~$I(f)$. Hence~$\l_1(f)=2$. Further,
$f^{-1}(X,Y,Z)=]b^2YZ-X^2,b^2Z^2,bXZ]$, so~$f$ is birational and~$\l_2(f)=1$.

\paragraph{\textbf{Type 2.4}
  $\boldsymbol{(\Gcal_3)}:\;f=[aX^2+YZ,XY,Y^2],\; a\ne0$}
\leavevmode\newline
The critical locus is a triple line, $\Crit(f)=3\cdot\{Y=0\}$, and we
have $f\bigl(\{Y=0\}\bigr)=[1,0,0]\in \Fix(f)$. Hence by the usual
argument via Lemma~\ref{lemma:CinCrita}, we find that $\l_1(f)=2$.
Further, $f^{-1}(X,Y,Z)=[YZ,Z^2,XZ-aY^2]$ shows that~$f$ is birational,
so~$\l_2(f)=1$.

\paragraph{\textbf{Type 3.1}
  $\boldsymbol{(\Gcal_4)}:\;f=[Z^2,XY,Y^2],\;a=e=0$}
\leavevmode\newline
The dynamical degree of a monomial map may be computed using the
formula in~\cite{MR2358970}. In affine coordinates the map is
$f(x,y)=(y^{-2},xy^{-1})$ with exponent
matrix~$\SmallMatrix{0&-2\\1&-1}$.  Then~\cite{MR2358970} says
that~$\l_1(f)$ is the spectral radius of the exponent matrix, so
$\l_1(f)=\left|\frac{-1+\sqrt{-7}}{2}\right|=\sqrt2$.  And setting
$f(x,y)=(\a,\b)$ with generic~$\a,\b$, we see that~$f^{-1}(\a,\b)$
consists of the two points~$(\b\g,\g)$ with $\g^2=\a^{-1}$.
Hence~$\l_2(f)=2$. 


\paragraph{\textbf{Type 3.2}
  $\boldsymbol{(\Gcal_4)}:\;f=[Z^2,XY,Y^2+eXZ],\; e\ne0$}
\leavevmode\newline
We have
\[
  I(f) = \bigl\{[1,0,0]\bigr\}
  \quad\text{and}\quad
  \Crit(f) = \{Z=0\} \cup \{eXZ=2Y^2\}.
\]
Both $f\bigl(\{Z=0\}\bigr)$ and $f\bigl(\{eXZ=2Y^2\}\bigr)$ are
curves, so~$\l_1(f)=2$. Next we dehomogenize with $Z=1$.  Then
$f(x,y)=\bigl((y^2+ex)^{-1},xy(y^2+ex)^{-1}\bigr)$.  Setting
$f(x,y)=(\a,\b)$ with generic~$\a,\b$ leads to $x=\a^{-1}\b y^{-1}$
and $\a y^3-y+\b e=0$, so $\l_2(f) = \#f^{-1}(\a,\b) = 3$.

\paragraph{\textbf{Type 3.3}
  $\boldsymbol{(\Gcal_4)}:\;f=[aX^2+Z^2,XY,Y^2],\; a\ne0$}
\leavevmode\newline
We have
\[
  I(f) = \bigl\{[1,0,\pm\sqrt{-a}]\bigr\}
  \quad\text{and}\quad
  \Crit(f) = \{Z=0\} \cup 2\cdot \{Y=0\}.
\]
The orbit of the line $Y=0$ is
\[
  \{Y=0\} \xrightarrow{\;f\;} \bigl\{[1,0,0]\bigr\}
  \xrightarrow{\;f\;} \bigl\{[1,0,0]\bigr\},
\]
so the orbit of~$Y=0$ never lands in~$I(f)$. 
The image the line $Z=0$ is a curve,
\[
  \{Z=0\} \xrightarrow{\;f\;} \{XZ=aY^2\}.
 \]
Hence~$\l_1(f)=2$. Next we dehomogenize with $Z=1$.  Then
$f(x,y)=\bigl((ax^2+1)/y^2,x/y\bigr)$.  Setting
$f(x,y)=(\a,\b)$ with generic~$\a,\b$ leads to $x=\b y$
and $(a\b^2-a)y^2+1=0$. Thus $\l_2(f) = \#f^{-1}(\a,\b) = 2$.

\paragraph{\textbf{Types 4.1,\thinspace4.2,\thinspace4.3,\thinspace4.4}$
  \boldsymbol{(\Gcal_4)}:\;f=[YZ,X^2+cZ^2,XY]$}
\leavevmode\newline
Proposition~\ref{proposition:c4f2autgm} says that~$f$ satisfies
$\deg(f^n)\le2n+1$, so~$\l_1(f)=1$.  Since~$\l_2\le\l_1^2$ in general,
it follows that~$\l_2(f)=1$.  (Alternatvely, it is not hard to write
down the inverse map.)

\paragraph{\textbf{Types 5.1,\thinspace5.2,\thinspace5.3}
  $\boldsymbol{(\Gcal_{2,2})}:\;f=[Y^2-Z^2,XY,eXZ],\; e\ne0$}
\leavevmode\newline
Proposition~\ref{proposition:fY2Z2XYeXZ} tells us that $\deg(f^n)\le n+1$,
so just as in the previous case, we have~$\l_1(f)=\l_2(f)=1$.

\paragraph{\textbf{Types 5.4,\thinspace5.5}$
  \boldsymbol{(\Gcal_{2,2})}:\;f=[aX^2+Y^2-Z^2,XY,eXZ],\; ae\ne0$}
\leavevmode\newline
We have
\[
  I(f) = \bigl\{ [0,1,1], [0,1,-1] \bigr\}
\]
and
\[
  \Crit(f) = \{X=0\} \cup \{aX^2-Y^2+Z^2=0\}.
\]
The line~$X=0$ is sent to a fixed point of~$f$,
\[
  f\bigl(\{X=0\}\bigr) =  [1,0,0] \in \Fix(f),
\]
while the conic $aX^2-Y^2+Z^2=0$ is sent to another conic,
\begin{align*}
  f\bigl(\{aX^2-Y^2&+Z^2=0\}\bigr)\\
  &= \bigl\{ [aX^2+Y^2-Z^2,XY,eXZ] : aX^2-Y^2+Z^2=0 \bigr\} \\
  &= \bigl\{ [2aX^2,XY,eXZ] : aX^2-Y^2+Z^2=0 \bigr\} \\
  &= \bigl\{ [2aX,Y,eZ] : aX^2-Y^2+Z^2=0 \bigr\} \\
  &= \bigl\{ [u,v,w] : a(u/2a)^2 - v^2 + (w/e)^2 = 0 \bigr\} \\
  &= \bigl\{ [X,Y,Z] : X^2/4a - Y^2 + Z^2/e^2 = 0 \bigr\}.
\end{align*}
Lemma~\ref{lemma:CinCrita}(c) tells us that the only curve in~$\PP^2$
that~$f$ maps to a point is the line~$\{X=0\}$. Now suppose that
$\l_1(f)<2$.  Then Lemma~\ref{lemma:CinCrita}(b) says that there is a
curve~$\G\subset\PP^2$ and an~$n\ge1$ such that~$f^n(\G)\subset I(f)$.
In particular,~$f^n(\G)$ is a point, so there is some $0\le m< n$
such that $f^m(\G)$ is a curve and $f^{m+1}(\G)$ is a point. But we
have just shown that this forces $f^m(\G)$ to be the line $X=0$ and
forces $f^{m+1}(\G)$ to be the point~$[1,0,0]$, which is not
in~$I(f)$. This completes the proof that $\l_1(f)=2$.

To compute the topological degree~$\l_2(f)$, we compute the inverse
image of a generic point. To simplify the calculation, we compute
$[x,y,z]\in f^{-1}(\a,\b,e\g)$ with~$\a,\b,\g$ generic. From the last
two coordinates we have $e\g xy = \b exz$, so $x=0$ or $\g y=\b
z$. We cannot have~$x=0$, since~$f(0,Y,Z)=[1,0,0]$. Hence our point
has the form $[x,y,z]=[\b x,\b y,\b z]=[\b x,\b y,\g y]$, so the
homogeneous point~$[x,y]\in\PP^1$ determines~$[x,y,z]$. We next use
the first two coordinates to deduce that
$\b (ax^2+y^2-z^2) = \a d xy$. Multiplying this by~$\b$ and using
$\b z=\g y$ yields
$(\b^2 a x^2+\b^2y^2-\g^2y^2) = \a\b d xy$.
Since~$\a,\b,\g$ are generic, this equation has two solutions
in~$\PP^1$. This shows that $\#f^{-1}(\a,d\b,e\g)=2$, so~$\l_2(f)=2$.

\paragraph{\textbf{Types 2.5}
  $\boldsymbol{(\Gcal_3)}:\;f=[aX^2+YZ,XY,Y^2+gXZ],\; ag\ne0$}
\leavevmode\newline
Here $\Crit(f)$ is a nodal cubic curve and the image of $\Crit(f)$ is
also a curve.  It follows from Lemma~\ref{lemma:CinCrita} that
$\l_1(f)=2$, since no iterate of~$f$ maps a curve to a point, much
less to a point in~$I(f)$. To compute the topological degree, we
dehomogenize by setting $x=X/Y$ and $z=Z/Y$ and solving
$\bigl((ax^2+z)/x,(1+gxz)/x\bigr)=(\a,\b)$ for generic~$\a,\b$. The
first coordinate gives $z=\a x-a x^2$, and substituting into the
second coordinate gives $1+gx(\a x-a x^2)=\b x$. Hence $\l_2(f)=3$.

\paragraph{\textbf{Type 7.1}
  $\boldsymbol{(\Gcal_5)}:\;f=[YZ,X^2,Y^2]$}
\leavevmode\newline
The map~$f$ of Type~$7.1$ is also monomial, but there's an 
easier way to calculate~$\l_1(f)$.  We note that
$f^8=[X^{16},Y^{16},Z^{16}]$, so in particular~$f^8$ is a
morphism. Hence
\[
  \l_1(f)=\l_1(f^8)^{1/8} =\deg(f^8)^{1/8}=16^{1/8}=\sqrt2.
\]
Similarly $\l_2(f)=\l_2(f^8)^{1/8}=256^{1/8}=2$.


\begin{acknowledgement}
We would like to thank Dan Abramo\-vich, Friedrich Knop, and Kimball
Martin for their helpful advice. We would also like to thank the
referees for their comments, corrections, and suggestions, and
especially for pointing out an algebra error in the original proof of
Lemma~\ref{lemma:GinPGL3} that had caused us to miss the conjugacy
class~\eqref{eqn:Lemma10c3}.
\end{acknowledgement}



\appendix

\section{Maps with Infinite Automorphism Group}  
\label{section:quotientmap}
As indicated earlier, we have restricted attention in this paper to
maps whose automorphism group is finite. The reason is because maps
with infinite automorphism group decompose as fiber product maps, as
described by the following general construction.

\begin{definition}
Let $X$ and $Y$ be varieties, and let $f:X\to X$ be a dominant
rational map. We say that \emph{$f$ descends to $Y$} if there are
dominant rational maps $\pi:X\to Y$ and $g:Y\to Y$ such we have a
commutative diagram
\[
\begin{CD}
  X @>f>> X \\
  @VV\pi V @VV\pi V \\
  Y @>g>> Y\\
\end{CD}
\]
\end{definition}

We note that if $f:X\to X$ descends to~$Y$, then analyzing the
dynamics of~$f$ may be reduced, in some sense, to analyzing the
dynamics of $g:Y\to Y$ and the ``twisted'' dynamics on the fibers.  In
particular, if $1\le\dim(Y)<\dim(X)$, then one is reduced to lower
dimensional problems.

We let
\[
  \Aut(X,f):=\{\f\in\Aut(X):\f\circ f=f\circ\f\}
\]
and note that if the quotient of~$X$ by a
subgroup~$\Gcal\subseteq\Aut(X,f)$ is well-defined, then~$f$ descends
to the quotient $X/\Gcal$.  We give two examples.
  
\begin{example}
Let
\[
  f = [YZ,X^2-Z^2,XY]
\]
be the map of Type~4.1 with $c=-1$ in Table~\ref{table:maintable}.
Then~$\Aut(f)$ contains a copy of~$\GG_m$,
\[
  \left\{ \SmallMatrix{s&0&t\\ 0&1&0\\ t&0&s\\ } : s^2-t^2=1 \right\} \subset \Aut(f).
\]
The quotient of~$f$ by this subgroup yields the commutative diagram
\[
  \begin{CD}
    \PP^2 @>[YZ,X^2-Z^2,XY]>> \PP^2 \\
    @VV [X^2-Z^2,Y^2]V  @VV [X^2-Z^2,Y^2]V \\
    \PP^1 @>[U,V]\to[-V,U]>> \PP^1 \\
  \end{CD}
\]
Of course, the map $f$ is not very interesting dynamically, since $f^2=[X,-Y,Z]$.
\end{example}

\begin{example}
Here is a more interesting example. The automorphism group of the map
\[
  f : \PP^2\to\PP^2,\quad f(X,Y,Z)=[X^2+YZ,XY,XZ]
\]
contains a copy of~$\GG_m$ via
\[
  \left\{ \SmallMatrix{1&0&0\\ 0&t&0\\ 0&0&t^{-1}\\ } : t\ne0 \right\} \subset \Aut(f).
\]
The quotient of~$f$ gives the diagram
\[
  \begin{CD}
    \PP^2 @>[X^2+YZ,XY,XZ]>> \PP^2 \\
    @VV [X^2,YZ]V  @VV [X^2,YZ]V \\
    \PP^1 @>[U,V]\to[(U+V)^2,UV]>> \PP^1 \\
  \end{CD}
\]
\end{example}  

\section{Semistability for Subgroups}
\label{section:semistabsubgp}

In this section we give a general result for GIT semistability
relative to subgroups.  We thank Friedrich Knop for explaining how the
following result is a consequence of a general theorem of
Luna~\cite{MR0376704}.


\begin{proposition}
\label{proposition:XGXHNH}
We work over an algebraically closed field~$K$ of characteristic~$0$. Let
\begin{align*}
  X &= \text{a smooth projective variety,} \\
  G &= \text{a reductive group acting linearly on $X$,} \\
  H &= \text{a subgroup of $G$ that is also reductive,}\\
  N(H) &= \text{the normalizer of $H$ in $G$.}
\end{align*}
For $x \in X$, let
\begin{align*}
  \operatorname{Stab}(x) &= \{ g \in G : g \cdot x = x \},\\
  X^H &= \{ x \in X : \text{$\operatorname{Stab}(x)$ contains $H$}\}.
\end{align*}
\begin{parts}
\Part{(a)}
Let $x\in X^H$. Then
\[
  \text{$x$ is $N(H)$-semistable}
  \quad\Longleftrightarrow\quad
  \text{$x$ is $G$-semistable}.
\]  
\Part{(b)}
The map
\[
  (X^H)^\ss/N(H) \longrightarrow X^\ss/G
\]
is a finite map.
\Part{(c)}
Let $\tilde X$ be the affine cone over~$X$, and for~$x\in X^H$, let~$\tilde x\in\tilde X$
be a lift. Thus each~$x\in X^H$ determines a character~$\chi_x$ on~$H$ via
\[
  \chi_x : H \longrightarrow\GG_m,\quad
  h\cdot\tilde x = \chi_x(h)\tilde x.
\]
For each character~$\chi\in\hat H:=\Hom(H,\GG_m)$, let $X^H_\chi:=\{x\in X^H:\chi_x=\chi\}$.
Then~$(X^H)^\ss$ decomposes as a disjoint union
\[
  (X^H)^\ss = \bigcup_{\chi\in\hat H} (X^H_\chi)^\ss.
\]
\Part{(d)}
For $\chi\in\hat H$ and $\pi\in N(H)$, define~$\chi^\pi\in\hat H$ by
$\chi^\pi(h)=\chi(\pi^{-1}h\pi)$. Then there is an isomorphism
\[
  X^H_\chi \longrightarrow X^H_{\chi^\pi},\quad
  x \longmapsto \pi\cdot x.
\]
\end{parts}
\end{proposition}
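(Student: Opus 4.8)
\textbf{Proof proposal for Proposition~\ref{proposition:XGXHNH}.}
The plan is to reduce every part to the single statement in part~(a), namely that for $x\in X^H$ one has $G$-semistability if and only if $N(H)$-semistability, and then to harvest (b), (c), (d) as essentially formal consequences. For part~(a) the approach is to invoke the theorem of Luna~\cite{MR0376704} on orbit closures and stabilizers: the key input is that $H$ reductive implies the orbit $G\cdot x$ is closed in $X^\ss_{\mathrm{aff}}$ (the affine cone over the semistable locus, minus the vertex) relative to the $G$-action precisely when it is closed relative to the $N(H)$-action, because the centralizer/stabilizer machinery of Luna controls exactly the directions transverse to $H$-fixed loci. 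Concretely, I would lift $x$ to $\tilde x$ in the affine cone $\tilde X$, use the Hilbert--Mumford criterion in the form recalled in Section~\ref{section:diagstability}, and observe that any one-parameter subgroup $\l:\GG_m\to G$ destabilizing $\tilde x$ can, after conjugating by an element of $G$ fixing $\tilde x$ (hence by an element normalizing the relevant torus inside $\Stab(\tilde x)\supseteq H$), be moved into $N(H)$; the reductivity of $H$ is what guarantees such a conjugation exists. The forward implication (if $x$ is $G$-semistable then it is $N(H)$-semistable) is trivial since $N(H)\subseteq G$, so only the reverse direction carries content, and it is here that Luna's theorem is doing the real work.

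For part~(b): by~(a), the inclusion $X^H\hookrightarrow X$ restricts to an inclusion $(X^H)^\ss\hookrightarrow X^\ss$ of the respective semistable loci, and it is $N(H)$-equivariant, so it descends to a morphism $(X^H)^\ss/N(H)\to X^\ss/G$ on the GIT quotients. To see this morphism is finite, I would argue quasi-finiteness plus properness. Quasi-finiteness: a fiber over the image of $x$ consists of $N(H)$-orbits of points $x'\in X^H$ with $G\cdot x'=G\cdot x$; writing $x'=g\cdot x$, one needs $g^{-1}Hg\subseteq\Stab(x')=g^{-1}\Stab(x)g$, i.e.\ both $H$ and $g^{-1}Hg$ lie in $\Stab(x')$, and since $\Stab(x')$ is an algebraic group containing only finitely many conjugacy classes of reductive subgroups isomorphic to $H$ that are $G$-conjugate to it (a standard rigidity fact for reductive subgroups), the coset $g$ ranges over finitely many $N(H)$-cosets. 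Properness follows because GIT quotients of semistable loci are proper over the base and the map is a morphism of such; alternatively one can check the valuative criterion directly using that closed orbits map to closed orbits by~(a). Combining quasi-finite and proper gives finite.

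Parts~(c) and~(d) are bookkeeping once the setup of part~(a) is in place. For~(c), the character $\chi_x:H\to\GG_m$, $h\cdot\tilde x=\chi_x(h)\tilde x$, is well-defined because $H$ fixes the line $K\tilde x$ (that is the definition of $x\in X^H$), and it is locally constant on $X^H$ in the Zariski topology since the matrix entries of the $H$-action depend algebraically on $x$ while a character takes values in the discrete set $\hat H$; hence $X^H=\bigsqcup_{\chi\in\hat H}X^H_\chi$ as a disjoint union of (closed and open) subvarieties, and intersecting with the semistable locus and using~(a) gives the stated decomposition $(X^H)^\ss=\bigcup_{\chi\in\hat H}(X^H_\chi)^\ss$. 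For~(d), given $\pi\in N(H)$ the map $x\mapsto\pi\cdot x$ sends $X^H$ to itself because $\Stab(\pi\cdot x)=\pi\Stab(x)\pi^{-1}\supseteq\pi H\pi^{-1}=H$, and a direct computation on the lift gives $h\cdot\widetilde{\pi x}=\pi\cdot\bigl((\pi^{-1}h\pi)\cdot\tilde x\bigr)=\chi_x(\pi^{-1}h\pi)\,\widetilde{\pi x}=\chi_x^\pi(h)\,\widetilde{\pi x}$, so $\pi\cdot x\in X^H_{\chi^\pi}$; applying the same to $\pi^{-1}$ shows this is an isomorphism $X^H_\chi\xrightarrow{\sim}X^H_{\chi^\pi}$.

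The main obstacle is part~(a), or more precisely extracting exactly the statement needed from Luna's paper: Luna works with affine varieties and closed orbits, and one must carefully translate between ``$G$-semistable point of the projective $X$'' and ``point of the affine cone whose $G$-orbit closure avoids the vertex,'' and then verify that the reductivity hypothesis on $H$ is precisely what lets one replace $G$ by $N(H)$ in the orbit-closure condition. I would expect to spend most of the write-up making this translation airtight (citing the relevant theorem of Luna and, if helpful, the MathOverflow discussion~\cite{MO243725} where Knop explains the reduction); everything downstream is then routine.
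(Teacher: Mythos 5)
Your parts (c) and (d) are fine and essentially match the paper (for (c) the paper only needs set-theoretic disjointness, which follows from $\tilde x\ne 0$; your clopen observation is a harmless strengthening). The problems are in (a) and (b). For (a) you offer two different mechanisms without committing to either: an orbit-closure argument ``via Luna'' and a Hilbert--Mumford argument in which a destabilizing one-parameter subgroup is conjugated into $N(H)$ by an element of the stabilizer. The second mechanism is not Luna's theorem at all --- it is Kempf's theory of optimal destabilizing one-parameter subgroups --- and as stated it is only a claim: the reductivity of $H$ would enter through $H\subseteq P(\l)$ for the optimal class and conjugation of $H$ into the Levi $Z_G(\l)$, none of which you verify. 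More importantly, both sketches miss the one genuine technical obstruction: $H$ stabilizes $x$ in $X$ but does \emph{not} fix the lift $\tilde x$ in the affine cone; it acts on the line $K\tilde x$ through a character $\chi$. Luna's result requires an honest fixed point of a reductive group acting on an affine variety, so it cannot be applied to $(\tilde X,H,\tilde x)$ directly. The paper's proof (due to Knop) handles this by first observing that $N(H)$-semistability forces $\chi$ to have \emph{finite} image --- otherwise the closure of $H\tilde x$ already contains the vertex --- and then replacing $G$ by $\tilde G=G\times\chi(H)$ and $H$ by the graph subgroup $\tilde H=\{(h,\chi(h)^{-1}):h\in H\}$, which genuinely fixes $\tilde x$; only then does Luna's Corollary~1 ($\tilde N\tilde x$ closed $\Rightarrow$ $\tilde G\tilde x$ closed) apply. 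That reduction is the entire content of (a) and is absent from your proposal.

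For (b), the paper gets finiteness immediately from the main theorem of Luna applied to the modified data $(\tilde X,\tilde G,\tilde H)$. Your alternative (quasi-finite plus proper) has two gaps. First, fibers of $X^\ss\to X^\ss/G$ are not unions of points with a common $G$-orbit: two semistable points are identified iff their orbit closures meet in $X^\ss$, so your description of the fiber is valid only after restricting to points with closed orbit, which you would need to arrange and justify. Second, the ``standard rigidity fact'' that a stabilizer contains only finitely many conjugacy classes of reductive subgroups $G$-conjugate to $H$ is essentially the finiteness that Luna's theorem provides, so quoting it as standard makes the argument circular rather than independent. Properness of the map between the two projective GIT quotients is fine, but by itself it does not rescue the quasi-finiteness step.
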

\begin{proof} (a)
(Knop~\cite{MO243725})
Let $\tilde{X}$ be the affine cone over~$X$, and for~$x\in X^H$, let
$\tilde{x}\in\tilde{X}$ be a lift. By
definition~\cite{mumford:geometricinvarianttheory}, the point~$x$ is
$G$-semistable if the closure of the orbit~$G\tilde{x}$ does not
contain the vertex~$0$, and similarly for~$N:=N(H)$. From this it is
obvious that if~$x$ is~$G$-semistable, then it is also~$N$-semistable.

Conversely, suppose that~$x$ is $N$-semistable. After possibly
replacing~$\tilde{x}$ by a point in the (unique) closed $N$-orbit
$\overline{N\tilde{x}}$, we may assume that $N\tilde{x}$ is closed and
not equal to~$\{0\}$.

The group~$H$ acts on the line~$K\tilde{x}$ by a character~$\chi$, and the
assumption that~$x$ is $N$-semistable implies that~$\chi$ has finite order.
In particular, $\chi(H)$ is a finite subgroup of~$\GG_m$. Let
\[
  \tilde{G} := G\times\chi(H)
  \quad\text{and}\quad
  \tilde{H} := \bigl\{ (h,\chi(h)^{-1}) : h \in H \bigr\}.
\]
Then~$\tilde{G}$ acts on~$\tilde{X}$ and $\tilde{x}$ is fixed by
$\tilde{H}$. Also note that the normalizer~$\tilde{N}$ of~$\tilde{H}$
in~$\tilde{G}$ is of finite index in $N\times\chi(H)$. In particular,
the orbit~$\tilde{N}\tilde{x}$ is closed and not equal to~$\{0\}$.
Now apply~\cite[Corollary~1]{MR0376704}, which says that
\[
  \text{$\tilde{N}\tilde{x}$ closed}
  \quad\Longrightarrow\quad
  \text{$\tilde{G}\tilde{x}$ closed}.
\]
Hence~$0$ is not in the closure of~$\tilde{G}\tilde{x}$, and therefore~$x$
is $G$-semistable.  This proves~(a)
\par\noindent(b)\enspace
This is an immediate consequence of the main result of~\cite{MR0376704},
which in our terminology says that $\tilde{X}^{\tilde{H}}/\tilde{N}\to
\tilde{X}/\tilde{G}$ is finite.
\par\noindent(c)\enspace
The character~$\chi_x$ is well-defined, since if we choose some other
lift~$\tilde{x}'$ of~$x$, then $\tilde{x}'=c\tilde{x}$ for some
$c\ne0$. It remains to show that the union is disjoint.  So suppose
that $x\in X^H_\chi\cap X^H_{\chi'}$. It follows that
$\chi(h)\tilde{x}=\chi'(h)\tilde{x}$ for all $h\in H$, and since
$\tilde{x}\ne\bfzero$, we find that
$\chi(h)=\chi'(h)$. Hence~$\chi=\chi'$.
\par\noindent(d)\enspace
This follows from the calculation
\[
  h\cdot \pi\cdot\tilde{x}
  = \pi\cdot(\pi^{-1}\cdot h\cdot\pi)\cdot\tilde{x}
  = \pi\cdot \chi_x(\pi^{-1}\cdot h\cdot\pi)\tilde{x}
  =  \chi_x(\pi^{-1}\cdot h\cdot\pi)\pi\cdot\tilde{x},
\]
which shows that $\chi_{\pi\cdot x}=\chi_x^\pi$.
\end{proof}

\end{document}